\def\@cite#1#2{[{{\bfseries #1}\if@tempswa , #2\fi}]}
\renewcommand{\section}{%
\@startsection{section}{1}{\z@}
{0.5truecm plus -1ex minus -.2ex}%
{1.0ex plus .2ex}{\bfseries\large}}
\def\@seccntformat#1{\csname the#1\endcsname.\ }
\numberwithin{equation}{section} 
\newtheorem{thm}{Theorem}[section]
\newtheorem{corollary}[thm]{Corollary}
\newtheorem{lem}[thm]{Lemma}
\theoremstyle{definition}
\newtheorem{remark}{Remark}[section]
\newcommand{\ep}{\varepsilon}
\newcommand{\pa}{\partial}
\newcommand{\tmax}{T_{\rm max}}
\newcommand{\lp}[2]{\|#2\|_{L^{#1}(\Omega)}}
\newcommand{\io}{\int_\Omega }
\begin{document}
\footnote[0]
    {2010{\it Mathematics Subject Classification}\/. 
    Primary: 35B40; Secondary: 35K55, 35Q30, 92C17.
    }
\footnote[0]
    {{\it Key words and phrases}\/: 
   degenerate chemotaxis system; flux limitation; extensibility criterion; boundedness. 
    }
\begin{center}
    \Large{{\bf 
    Extensibility criterion ruling out gradient blow-up\\ 
    in a quasilinear degenerate chemotaxis system\\
    with flux limitation
    }}
\end{center}
\vspace{5pt}
\begin{center}
     Masaaki Mizukami%
     \footnote{Corresponding author}%
     \footnote{Partially supported by JSPS Research Fellowship 
    for Young Scientists, No.\ 17J00101 }, 
    Tatsuhiko Ono,
    Tomomi Yokota%
   \footnote{Partially supported by Grant-in-Aid for
    Scientific Research (C), No.\ 16K05182.}%
   \footnote[0]{
    E-mail: 
     {\tt masaaki.mizukami.math@gmail.com}, 
    \ {\tt wpsonotatsu45@gmail.com}, \\
    \ {\tt yokota@rs.kagu.tus.ac.jp}
    }\\
    \vspace{12pt}
    Department of Mathematics, 
    Tokyo University of Science\\
    1-3, Kagurazaka, Shinjuku-ku, 
    Tokyo 162-8601, Japan\\
    \vspace{2pt}
\end{center}
\begin{center}    
    \small \today
\end{center}

\vspace{2pt}
\newenvironment{summary}
{\vspace{.5\baselineskip}\begin{list}{}{%
     \setlength{\baselineskip}{0.85\baselineskip}
     \setlength{\topsep}{0pt}
     \setlength{\leftmargin}{12mm}
     \setlength{\rightmargin}{12mm}
     \setlength{\listparindent}{0mm}
     \setlength{\itemindent}{\listparindent}
     \setlength{\parsep}{0pt}
     \item\relax}}{\end{list}\vspace{.5\baselineskip}}
\begin{summary}
{\footnotesize {\bf Abstract.}
    This paper deals with the 
    quasilinear degenerate chemotaxis system with flux limitation
        \begin{equation*}   
     \begin{cases}
         u_t = \nabla\cdot\left(\dfrac{u^p
         \nabla u}{\sqrt{u^2 + |\nabla u|^2}}
         \right) -\chi \nabla\cdot\left(
         \dfrac{u^q\nabla v}{\sqrt{1 + 
         |\nabla v|^2}}\right),
\\[2mm]
         0 = \Delta v - \mu + u        
     \end{cases}
 \end{equation*}
    under no-flux boundary conditions in balls 
    $\Omega\subset\mathbb{R}^n$, and the initial condition 
          $u|_{t=0}=u_0$
     for a radially symmetric and positive initial data 
     $u_0\in C^3(\overline{\Omega})$, 
     where $\chi>0$ and $\mu:=\frac{1}{|\Omega|}\int_{\Omega}u_0$. 
     Bellomo--Winkler 
     (Comm.\  Partial Differential Equations;2017;42;436--473) 
     proved local existence of unique 
     classical solutions and extensibility criterion ruling out gradient 
     blow-up 
    as well as global existence and boundedness of solutions when $p=q=1$ 
   under some conditions for $\chi$ and $\int_\Omega u_0$. 
This paper derives local existence and extensibility criterion 
ruling out gradient blow-up when $p,q\geq 1$, 
and moreover shows
 global existence and boundedness of solutions when 
 $p>q+1-\frac{1}{n}$. 
}%
\end{summary}
\vspace{10pt}

\newpage

\section{Introduction and results} \label{Sec1}
 In this paper we consider the following 
 quasilinear degenerate chemotaxis system with flux limitation: 
 \begin{equation}\label{P}     
     \begin{cases}
         u_t = \nabla\cdot\left(\dfrac{u^p
         \nabla u}{\sqrt{u^2 + |\nabla u|^2}}
         \right) -\chi \nabla\cdot\left(
         \dfrac{u^q\nabla v}{\sqrt{1 + 
         |\nabla v|^2}}\right),
         &x\in \Omega,\ t>0,
\\[2mm]
         0 = \Delta v - \mu + u, 
         &x\in \Omega,\ t>0,         
\\[2mm]
          \left(\dfrac{u^p\nabla u}{\sqrt{u^2 +
          |\nabla u|^2}} - \chi
          \dfrac{u^q\nabla v}{\sqrt{1 +
          |\nabla v|^2}}\right)\cdot\nu = 0, 
         &x \in \partial\Omega,\ t>0,
\\[5mm]
         u(x,0) = u_0(x),
         &x \in \Omega,
     \end{cases}
 \end{equation}
\noindent
where $\Omega= B_R(0)\subset\mathbb{R}^n$, $n\geq1$ ,
 $\nu$ is the outward normal vector to $\pa\Omega$ and $\chi>0$ 
 indicates the strength of chemotactic cross-diffusion. 
  The initial data $u_0$ is assumed to be a 
   function satisfying  
\begin{align}\label{u0}
u_0\in C^3(\overline{\Omega})
\quad\mbox{is radially symmetric and positive in}
\ \overline{\Omega}\ \mbox{with}
\ \frac{\partial u_0}{\partial \nu}=0\ \mbox{on}\ \partial\Omega,
\end{align} 
so that the spatial average 
\begin{align}
\mu:=\frac{1}{|\Omega|}\int_{\Omega}u_0(x)\,dx
\end{align} 
is positive. 
%

From a point of the biological view, 
this problem \eqref{P} describes 
the evolution of a species which has chemotaxis, 
where chemotaxis is the property such that species 
move towards higher concentration of a chemical substance. 
The unknown function 
$u(x,t)$ denotes the population density of species 
and the unknown function 
$v(x,t)$ represents the concentration of the chemical substance 
at place $x\in\Omega$ and time $t\geq 0$. 
In the problem \eqref{P} the terms 
 $ \nabla\cdot\big(\frac{u^p \nabla u}
{\sqrt{u^2 + |\nabla u|^2}} \big)$
 and 
 $-\chi \nabla\cdot\big( \frac{u^q\nabla v}
{\sqrt{1 + |\nabla v|^2}} \big)$
describe the effect of diffusion and 
the effect of chemotactic interaction, respectively. 
Moreover, the flux limitation provides the situation such that 
species can move through some specific way, e.g., the border of the cells, with finite speed of propagation.   
(for more detail, see \cite{B-B-T-W}).
Here the problem \eqref{P} is a special case of 
the following generalized problem of the chemotaxis system 
such that the first and second equations of \eqref{P} are replaced with 
\begin{equation}\label{Porigin} 
 \begin{cases} 
 u_t =
  \nabla\cdot\left(D_u(u,v)\dfrac{u\nabla u}{\sqrt{u^2 + \frac{{\nu}^2}{c^2}|\nabla u|^2}} 
   -S(u,v)\dfrac{u\nabla v}{\sqrt{1+|\nabla v|^2}}\right)
   +H_1(u,v), &
 \\ 
  v_t =  D_v \Delta v + H_2 (u,v)
  &
  \end{cases}
\end{equation}
where $D_u$ and $D_v$ denote the property of 
cell's and chemoattractant's diffusion, respectively, 
and $S$ shows the chemotactic sensitivity 
as well as $H_1, H_2$ represent interactions. 
Here, $\nu$ and $c$ are quantities which 
describe kinematic viscosity and maximum speed of propagation.  

From a point of the mathematical view, 
because of the difficulties of the flux limitation, 
good functions such as 
a Lyapunov function and an energy function seem not to be found.  
Bellomo--Winkler \cite{B-W} made a breakthrough in this area 
by considering the problem which is \eqref{P} 
with $p=q=1:$      
 \begin{align}\label{P11}     
 \begin{cases}
  u_t = \nabla\cdot\left(\dfrac{u\nabla u}{\sqrt{u^2 + |\nabla u|^2}} \right)
   -\chi \nabla\cdot\left(\dfrac{u\nabla v}{\sqrt{1 +|\nabla v|^2}}\right),
\\[2mm]
     0 = \Delta v - \mu + u;         
 \end{cases}
 \end{align}
in \cite{B-W} 
local existence with extensibility 
criterion and global existence of bounded radial solutions 
were shown 
under some conditions for 
$\chi$ and $\int_\Omega u_0$. 
Moreover, Bellomo--Winkler \cite{B-W;blowup} established 
existence of an initial data such that the corresponding 
solution blows up in finite time 
under some conditions for $\chi$ and $\int_\Omega u_0$. 
Even though Bellomo--Winkler \cite{B-W,B-W;blowup} 
overcame the difficulties come from the flux limitation in the special setting, because of difficulties of the problem \eqref{Porigin}, 
there still are only two previous results about the chemotaxis system with flux limitation. 
%

On the other hand, the problem \eqref{Porigin} without flux limitation and with some special setting of $D_u,D_v, S,H_1,H_2$, 
\begin{align}\label{PKS}     
 \begin{cases}
 u_t = \nabla\cdot\left(u^{p-1} \nabla u - u^{q-1}\nabla v\right), 
\\
     v_t = \Delta v -v + u       
 \end{cases}
 \end{align}
is called a chemotaxis system  
and is investigated intensively. 
The system \eqref{PKS} with $p=1$ and $q=2$ is first proposed by 
Keller--Segel \cite{K-S}, and there are several results on this problem; 
global existence and boundedness can be found in 
\cite{Cao,N-S-Y,O-Y}; 
existence of blow-up solutions is in \cite{H-W,M-W,W-2013}. 
On the other hand, 
Hillen--Painter \cite{H-P} proposed the degenerate chemotaxis system, 
that is, the problem \eqref{PKS} 
with $p> 1$ and $q>2$, 
to describe a sensitive dynamics in phenomena.  
In the degenerate chemotaxis system, it is known that 
the relation between $p$ and $q$ determines the properties of solutions to the system; 
Sugiyama--Kunii \cite{S-K} first dealt with 
the degenerate chemotaxis system in the case that $\Omega=\mathbb{R}^n$ 
and obtained global existence of solutions when $q\leq m$; 
the condition for global existence was extended 
from $q\leq m$ to $q<m+\frac{2}{N}$ 
in \cite{I-Y2012} and 
their boundedness was obtained in \cite{I-Y2012-2};
global existence and boundedness in the case that $\Omega$ is a bounded domain can be found in \cite{I-S-Y};
in the case that $q > m + \frac{2}{N}$ existence of blow-up solutions 
was established in \cite{H-I-Y}. 
%

%
In view of the study of the chemotaxis system, 
the system \eqref{P} is a natural and meaningful problem 
as a generalization of the problem \eqref{P11}; 
thus to consider the system \eqref{P} is an important 
step to consider the system \eqref{Porigin}. 
Therefore the main purpose of this paper is to obtain
 the following two results about the problem \eqref{P}:
 \begin{itemize}
 \item local existence and extensibility criterion ruling out 
 gradient blow-up,
 \item global existence and boundedness of solutions 
 under some condition for $p$ and $q$.
 \end{itemize}
\medskip
Here the quantities $u^{p-1}$ and $u^{q-1}$ with $p\neq 1$ or $q\neq 1$ in the diffusion term and the chemotaxis term, respectively, 
destroy the mathematical structure of the system with $p=q=1$. Indeed, because of these quantities, 
we could not employ the same argument as in \cite{B-W} which is based on the comparison principle; 
in particular, since there are new {\it nonlinear} terms 
in some parabolic operator, 
a comparison function used in \cite{B-W}, which is a solution to some {\it linear} ordinary differential equation could not work well. 
Thus in order to attain the purposes of this work, 
we need to deal with the difficulties of the new quantities which come from the nonlinear terms.  

Now we state the main theorems.
 The first result is 
concerned with local existence and extensibility criterion. 
%
\medskip
\begin{thm}\label{mainthm1}
Suppose that $p,q \geq 1$ and that $u_0$ complies with \eqref{u0}. 
 Then there exist $\tmax\in(0,\infty]$ and a pair $(u,v)$ of 
 positive radially symmetric functions 
 \begin{align*}
u\in C^{2,1}(\overline{\Omega}\times[0,\tmax))
\quad \mbox{and}\quad
v\in C^{2,0}(\overline{\Omega}\times[0,\tmax))
\end{align*}
which solve 
\eqref{P} classically in $\Omega\times(0,\tmax)$, and moreover 
$u$ satisfies the following extensibility criterion\/{\rm :}  
\begin{align}\label{extensibility;criterion}
\mbox{if}\quad\tmax<\infty,\quad\mbox{then}\quad
\limsup_{t\nearrow\tmax}\|u(\cdot,t)\|_{L^{\infty}(\Omega)}
=\infty.
\end{align}
 \end{thm}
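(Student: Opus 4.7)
The plan is to adapt the approximation-and-limit scheme that is standard for flux-limited chemotaxis, while replacing the comparison-principle bounds of \cite{B-W} by estimates that accommodate the nonlinearities $u^p$ and $u^q$.

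First, I reformulate \eqref{P} in radial coordinates and pass to the cumulative mass variable. Setting $s := r^n \in [0, R^n]$ and $w(s,t) := n\int_0^{s^{1/n}} \rho^{n-1} u(\rho,t)\,d\rho$, one has $u(r,t) = w_s(r^n,t)$; moreover, explicit integration of $0 = \Delta v - \mu + u$ expresses $v_r$ in terms of $w$ alone. Inserting these identities into the first equation converts \eqref{P} into a single scalar quasilinear parabolic equation of the form $w_t = \Phi(s, w, w_s, w_{ss})$ on $(0, R^n) \times (0, T)$ with Dirichlet data $w(0,t)=0$ and $w(R^n,t) = \mu R^n$ coming from the no-flux boundary condition and mass conservation. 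This scalar problem is the object of the subsequent analysis.

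For each $\varepsilon \in (0,1)$ I regularize by shifting $w_s \mapsto w_s + \varepsilon$ inside the mobility (removing the degeneracy as $u \to 0$) and, if necessary, adding a small auxiliary diffusion $\varepsilon w_{ss}$, so that the equation becomes uniformly parabolic in one spatial variable. Classical local existence of a solution $w_\varepsilon \in C^{2+\alpha,1+\alpha/2}$ on a maximal interval $[0, T_\varepsilon)$ then follows from standard results on scalar quasilinear parabolic equations (Amann's theorem, or a Schauder fixed-point argument on the linearized operator). Positivity $w_{\varepsilon,s} > 0$, hence positivity of the back-transformed $u_\varepsilon$, follows from the weak maximum principle applied directly to the regularized radial $u$-equation, without invoking comparison-type arguments as in \cite{B-W}.

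The central technical obstacle is the passage $\varepsilon \searrow 0$: one must produce $\varepsilon$-independent $C^{2+\alpha,1+\alpha/2}_{\mathrm{loc}}$ bounds on $w_\varepsilon$ on a common time interval $[0, T_\ast)$. The pointwise bound $0 < w_\varepsilon \le \mu R^n$ is immediate, but controlling the coefficients $w_{\varepsilon,s}^p$ and $w_{\varepsilon,s}^q$ both from above and from below (the latter being needed to preserve the ellipticity constant uniformly) requires adapting and strengthening the barrier arguments of \cite{B-W}, which in their original form are tailored to $p=q=1$; this is where most of the work lies. Combining these bounds with parabolic Schauder estimates and compactness then delivers the limit solution $(u,v)$ with the stated regularity. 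Finally, the extensibility criterion \eqref{extensibility;criterion} is obtained by contradiction: if $\tmax < \infty$ while $\|u(\cdot,t)\|_{L^\infty(\Omega)}$ stays bounded, elliptic regularity yields a uniform bound on $\nabla v$, the a priori estimates above extend uniformly up to $\tmax$, and the solution can be restarted past $\tmax$, contradicting maximality.
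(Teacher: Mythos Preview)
Your proposal correctly identifies the overall architecture---local solvability via regularization, followed by a contradiction argument for the refined extensibility criterion---but it leaves precisely the heart of the matter unproved. You write that controlling $w_{\varepsilon,s}$ from above and below ``requires adapting and strengthening the barrier arguments of \cite{B-W}\ldots this is where most of the work lies,'' and then move on. That sentence is not a proof step; it is a restatement of the problem. The first extensibility criterion (Lemma~\ref{Lem2.1}) already tells you that if $\tmax<\infty$ then either $\inf u\to 0$ or $\|u\|_{W^{1,\infty}}\to\infty$; to upgrade this to \eqref{extensibility;criterion} you must show that a bound on $\|u\|_{L^\infty}$ alone forces both a positive lower bound on $u$ \emph{and} a bound on $\|u_r\|_{L^\infty}$. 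Your final paragraph presupposes exactly this implication without supplying it: ``elliptic regularity yields a uniform bound on $\nabla v$'' is true but irrelevant, since the obstruction to restarting is a possible blow-up of $\nabla u$, not of $\nabla v$.

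The paper's route is quite different from yours in its technical core, and this is where the genuine content lies. Rather than passing to the mass variable $w$, the paper works directly with the radial equation for $u_r$ and derives a parabolic operator $\mathcal{P}$ acting on $u_r$ (Lemma~\ref{urt.lem}). The novelty for $p,q>1$ is a new \emph{quadratic} term $a_3(r,t)\varphi^2$ in $\mathcal{P}$, absent when $p=q=1$; this forces the comparison functions of \cite{B-W} (solutions of linear ODEs) to be replaced by solutions of Riccati-type ODEs, concretely built from pieces of tangent functions (Section~\ref{Sec4} and Lemma~\ref{lem;ur;boundary}). A second, separate idea is needed for the upper bound on $u_r$: the paper introduces $z:=u_t/u$, shows $\|u_r\|_{L^\infty}$ is controlled by $\|z_+\|_{L^\infty}$ (Corollary~\ref{ur;esti;z}), derives a nonlocal parabolic inequality for $z$ containing a new $(p-1)z^2$ term (Lemma~\ref{lem5.4}), and bounds $z$ from above via yet another Riccati-type comparison (Lemmas~\ref{lemma5.6}--5.7). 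None of this machinery is visible in your sketch, and the barrier constructions of \cite{B-W} do not carry over without it, because the extra nonlinearities change the character of the comparison ODE from linear to genuinely quadratic.
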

\medskip
%
Next, aided by extensibility criterion from Theorem \ref{mainthm1}, 
we obtain global existence and boundedness of solutions.  
\begin{thm}\label{mainthm2}
Assume that $u_0$ satisfies \eqref{u0}, and let $p,q \geq 1$ be constants such that 
\begin{align}\label{pq-Assumption}
p>q+1-\frac{1}{n}.
\end{align}
Then the problem \eqref{P} possesses a global classical 
solution $(u,v)$ which is a pair of radially symmetric functions 
satisfying that 
\begin{align*}
u\in C^{2,1}(\overline{\Omega}\times[0,\infty))
\quad\mbox{and}\quad
v\in C^{2,0}(\overline{\Omega}\times[0,\infty))
\end{align*}
 and that there exists $C>0$ such that
 \begin{align*}
\|u(\cdot,t)\|_{L^{\infty}(\Omega)}\leq C 
\quad\mbox{and}\quad
\|v(\cdot,t)\|_{L^{\infty}(\Omega)}\leq C
\end{align*}
for all $t>0$. 
\end{thm}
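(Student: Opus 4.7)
The plan is to invoke the extensibility criterion from Theorem~\ref{mainthm1}: it suffices to establish a time-independent bound $\|u(\cdot,t)\|_{L^{\infty}(\Omega)}\le C$ on the maximal existence interval $[0,\tmax)$, for then \eqref{extensibility;criterion} forces $\tmax=\infty$, after which the corresponding bound on $v$ follows from elliptic regularity applied to $0=\Delta v-\mu+u$ together with an appropriate normalisation.

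The starting point is the basic mass conservation
\begin{equation*}
\int_{\Omega}u(\cdot,t)\,dx=\int_{\Omega}u_{0}\,dx=\mu|\Omega|\qquad\text{for every }t\in[0,\tmax),
\end{equation*}
obtained by integrating the first equation in \eqref{P} and exploiting the no-flux boundary condition. Paired with elliptic regularity for $-\Delta v=\mu-u$ on the ball $\Omega$, this supplies a baseline control of $\nabla v$ in $L^{r}(\Omega)$ in terms of $\|u\|_{L^{s}(\Omega)}$ for a suitable range of exponents.

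The heart of the argument is an $L^{k}$-energy estimate for $u$ with arbitrarily large $k$. Testing the first equation in \eqref{P} with $u^{k-1}$ and noting that the combined-flux boundary condition makes the boundary contribution vanish after one integration by parts, one obtains
\begin{equation*}
\frac{1}{k}\frac{d}{dt}\int_{\Omega}u^{k}+(k-1)\int_{\Omega}\frac{u^{k+p-2}|\nabla u|^{2}}{\sqrt{u^{2}+|\nabla u|^{2}}}=\chi(k-1)\int_{\Omega}\frac{u^{k+q-2}\,\nabla u\cdot\nabla v}{\sqrt{1+|\nabla v|^{2}}}.
\end{equation*}
The decisive gain from the flux limitation is the pointwise bound $|\nabla v|/\sqrt{1+|\nabla v|^{2}}\le 1$, so the right-hand side is dominated by $\chi(k-1)\int_{\Omega}u^{k+q-2}|\nabla u|\,dx$. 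For the left-hand side I would split the spatial domain into $\{|\nabla u|\le u\}$, where $\sqrt{u^{2}+|\nabla u|^{2}}\le\sqrt{2}\,u$ and the dissipation controls $u^{k+p-3}|\nabla u|^{2}$, and $\{|\nabla u|>u\}$, where it controls $u^{k+p-2}|\nabla u|$. A Young-type inequality in each region, combined with a Gagliardo--Nirenberg interpolation that exploits the $L^{1}$-mass bound from the previous step, absorbs the chemotactic contribution into the dissipation precisely under the sharp exponent condition $p>q+1-\frac{1}{n}$, yielding $\sup_{t\in[0,\tmax)}\|u(\cdot,t)\|_{L^{k}(\Omega)}<\infty$ for every finite $k$.

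The last step is a Moser-type iteration, exploiting once more that the flux-limited chemotactic drift is pointwise bounded by $\chi u^{q}$, to upgrade the uniform $L^{k}$-bounds to a uniform $L^{\infty}$-bound on $[0,\tmax)$; alternatively, radial symmetry may be used by passing to the mass-accumulation variable $w(r,t)=\int_{0}^{r}\rho^{n-1}u(\rho,t)\,d\rho$, which satisfies a scalar equation with pointwise-bounded drift and from which an $L^{\infty}$-estimate can be extracted by comparison under the same exponent condition. I expect the main obstacle to lie in the $L^{k}$ estimate: since the degenerate flux-limited diffusion cannot simply be recast in the clean form $|\nabla u^{(k+p)/2}|^{2}$, one must balance the contributions from the small- and large-gradient regimes delicately in order to extract just enough dissipation to absorb the chemotactic term under the sharp threshold $p>q+1-\frac{1}{n}$.
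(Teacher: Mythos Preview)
Your overall architecture---invoke Theorem~\ref{mainthm1}, derive uniform $L^{k}$-bounds by testing with $u^{k-1}$, then run a Moser iteration---matches the paper. The gap is in the handling of the chemotactic term. You estimate it via the crude flux-limitation bound $|\nabla v|/\sqrt{1+|\nabla v|^{2}}\le 1$, which leaves you with
\[
\chi(k-1)\int_{\Omega}u^{k+q-2}|\nabla u|.
\]
This is a \emph{first-order} gradient term, and after Lemma~\ref{lem6.1} the flux-limited diffusion likewise supplies only a first-order quantity $\int_{\Omega}u^{k+p-2}|\nabla u|$. The two compete at the same gradient order; the only leverage is the difference $p-q$ in the power of $u$. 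On the set where $u$ is small the factor $u^{q-p}$ blows up, so for arbitrary $\chi>0$ and without a time-uniform lower bound on $u$ (which is not available here, since Lemma~\ref{lem;rulingout-extinction} presupposes boundedness of $u$), the chemotactic contribution cannot be absorbed. In particular your splitting into $\{|\nabla u|\le u\}$ and $\{|\nabla u|>u\}$ does not close: on the large-gradient region both dissipation and cross-diffusion are linear in $|\nabla u|$, and Young's inequality produces a residual $\int_{\Omega}|\nabla u|$ that you have no way to control.

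The paper avoids this by \emph{not} discarding the structure of $\nabla v$. Instead it writes $u^{k+q-2}\nabla u=\tfrac{1}{k+q-1}\nabla u^{k+q-1}$, integrates by parts once more, and expands
\[
\nabla\!\cdot\!\left(\frac{\nabla v}{\sqrt{1+|\nabla v|^{2}}}\right)
=\frac{\Delta v}{\sqrt{1+|\nabla v|^{2}}}
+\nabla v\cdot\nabla\!\left(\frac{1}{\sqrt{1+|\nabla v|^{2}}}\right),
\]
inserting $\Delta v=\mu-u$ and the explicit radial formulas from Lemma~\ref{lem2.4}. This converts the chemotactic contribution into quantities of the type $\int_{\Omega}u^{k+q}$ (zeroth order in $\nabla u$), which can then be controlled by the first-order dissipation $\int_{\Omega}|\nabla u^{k+p-1}|$ through the Gagliardo--Nirenberg interpolation of Lemma~\ref{lemma6.2}. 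It is precisely this zeroth-versus-first-order gap that produces the sharp threshold $p>q+1-\tfrac{1}{n}$; your direct approach cannot see it. The alternative route via the mass-accumulation function that you mention is plausible but is not developed in your proposal and is not the path taken here.
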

\begin{remark}
This theorem shows global existence of solutions to \eqref{P} 
when $p> q+1-\frac{1}{n}$.
On the other hand, in \cite{CMY} 
existence of blow-up solutions is obtained when $p\le q$.
Here there is a gap between these results; 
in the case that $q < p \le q+1-\frac 1n$, 
 behaviour
 of solutions is an open problem except the case that 
$n=1$. 

\end{remark}
\medskip
In Theorem \ref{mainthm1}, extensibility criterion 
 \eqref{extensibility;criterion} foresees to establish not only 
 the results for
 global existence and boundedness of solutions 
  (Theorem \ref{mainthm2}) but also the result for
  finite time blow-up of solutions
  (see \cite{CMY}), while extensibility criterion  
  in the result on local existence via the standard manner
   (see Lemma 2.1)
    is written as 
   \begin{align*}
\mbox{if}\ \ \tmax<\infty,\ \mbox{then either}
\ 
\liminf_{t\nearrow\tmax}\inf_{x\in\Omega}u(x,t)=0
\ \mbox{or}\ 
\limsup_{t\nearrow\tmax}\|u(\cdot,t)\|_{W^{1,\infty}(\Omega)}
=\infty.
\end{align*}   
   This includes possibility
   of extinction and gradient blow-up of solutions. 
 Therefore, the essential part is to obtain 
 extensibility criterion ruling out this possibility. 
 Especially, the main difficulty in the proof is to show the estimate 
 $\|\nabla u(\cdot, t)\|_{L^\infty(\Omega)}\leq C$ with some $C>0$. 
 We show this key estimate via using comparison arguments
with a new comparison function.  

First, in Section \ref{Sec2}, we calculate a partial derivative of $u_t$  
 with respect to $r$ and introduce an operator $\mathcal{P}$.
 Since $u_{rt}$ has new terms such as 
 \[p(p-1)\frac{u^{p-2}u^7_r} {{\sqrt{u^2+u^2_r}}^5}
                  -q(q-1)\chi\frac{u^{q-2}u^2_rv_r}{\sqrt{1+v^2_r}},\]
 it is necessary to introduce a new operator 
 which is different from \cite{B-W} such that 
\begin{align*}
   (\mathcal{P}\varphi)(r,t)
   :={\varphi}_t-A_1(r,t){\varphi}_{rr}-A_2(r,t){\varphi}_r
   -a_3(r,t){\varphi}^2-A_3(r,t)\varphi-A_4(r,t),
   \end{align*}
   with a new term $a_3(r,t){\varphi}^2$.
Accordingly, we are forced to change a comparison function.
Section \ref{Sec3} is devoted to obtaining a lower estimate for $u$ which implies that extinction of solutions has never happened. 
In Section \ref{Sec4}, to obtain a lower estimate for $u_r$, 
we define a new comparison function $\underline{\varphi}$ 
by connecting parts of a tangent function and their transitions 
 which satisfy
 some ordinary differential equation. 
Here, since tangent functions have asymptotic lines, the arguments become more 
sensitive than \cite{B-W}. 
In Section \ref{Sec5} we establish
 an upper estimate for $u_r$ and show Theorem \ref{mainthm1}. 

In Theorem \ref{mainthm2}, 
the strategy for the proof of boundedness of $u$ 
is to establish an $L^\infty$-estimate for $u$. 
In Section \ref{Sec6}, 
using 
\[
\nabla\cdot \left(\frac{\nabla v}{\sqrt{1+|\nabla v|^2}}\right)
=\Delta v \frac{1}{\sqrt{1+|\nabla v|^2}}+
\nabla v \cdot \nabla\left( \frac{1}{\sqrt{1+|\nabla v|^2}}\right)
\]
and the fact that $u$ is radially symmetric 
and aided by our condition $p>q+1-\frac{1}{n}$, 
from utilizing the energy function $\int_\Omega u^m$ for $m\ge 1$ 
we obtain boundedness of solutions and 
establish Theorem \ref{mainthm2}.    

\section{Preliminaries}\label{Sec2}
%
%
In this section we shall give some important identities and 
useful estimates.
First we show local existence and first extensibility 
criterion which contains possibility of extinction and 
gradient blow-up of solutions.
\begin{lem}\label{Lem2.1}
Assume that $u_0$ satisfies \eqref{u0}. Then there exist 
$\tmax\in(0,\infty]$ and a pair $(u,v)$ of 
radially symmetric positive functions 
 \begin{align*}
u\in C^{2,1}(\overline{\Omega}\times[0,\tmax))
\quad \mbox{and}\quad
v\in C^{2,0}(\overline{\Omega}\times[0,\tmax))
\end{align*}
which satisfy \eqref{P} in the classical sense in
$\Omega\times(0,\tmax)$. 
Moreover, 
\begin{align}\label{1st-criterion}
\mbox{if}\ \ \tmax<\infty,\ \mbox{then either}
\ 
\liminf_{t\nearrow\tmax}\inf_{x\in\Omega}u(x,t)=0
\ \mbox{or}\ 
\limsup_{t\nearrow\tmax}\|u(\cdot,t)\|_{W^{1,\infty}(\Omega)}
=\infty.
\end{align}
\end{lem}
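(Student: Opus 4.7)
The plan is to reduce \eqref{P} to a single quasilinear parabolic equation in $u$, obtain local classical existence by a contraction-mapping argument \`a la Amann, and finally derive the extensibility criterion \eqref{1st-criterion} by a bootstrap contradiction.

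First I would formally integrate the first equation over $\Omega$ and use the no-flux condition to obtain mass conservation $\int_\Omega u(\cdot,t)\,dx = \mu|\Omega|$, which is precisely the solvability condition for the Neumann problem $-\Delta v = -u+\mu$. Hence for each admissible $u$ the second equation uniquely determines $v=v[u]$ up to an additive constant, and by rotational invariance $v[u]$ is radial whenever $u$ is. Elliptic regularity gives $\|v[u]\|_{W^{2,s}(\Omega)}\le C\|u\|_{L^s(\Omega)}$ for every $s\in(1,\infty)$, so substituting $v=v[u]$ turns \eqref{P} into a scalar quasilinear parabolic equation for $u$ alone, with smooth nonlocal coupling.

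For local existence I would fix $c_0\in(0,\min_{\overline{\Omega}} u_0)$ and $R>\|u_0\|_{W^{1,\infty}(\Omega)}$ and set
\[
X_T:=\{\,u\in C([0,T];C^1(\overline{\Omega}))\ :\ u(\cdot,0)=u_0,\ u\ge c_0,\ \|u\|_{X_T}\le R\,\},
\]
observing that on this set the effective diffusion is uniformly elliptic: the directional coefficient $u^{p+2}/(u^2+|\nabla u|^2)^{3/2}$ stays bounded below away from $0$ and all nonlinearities are smooth. Define $\Phi:X_T\to X_T$ by freezing $u$ and $v[u]$ in the nonlinear coefficients and solving the resulting linear parabolic problem with no-flux data; by standard H\"older--Schauder estimates together with the parabolic maximum principle (which propagates the lower bound $c_0$ as well as radial symmetry), $\Phi$ maps $X_T$ into itself and is a contraction for $T>0$ sufficiently small. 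The resulting fixed point is the desired classical radial positive solution, and higher regularity $u\in C^{2,1}$ follows from Schauder theory applied to the linearised equation.

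Finally, to establish \eqref{1st-criterion}, I would argue by contradiction: suppose $\tmax<\infty$ while both $\liminf_{t\nearrow\tmax}\inf_\Omega u>0$ and $\limsup_{t\nearrow\tmax}\|u(\cdot,t)\|_{W^{1,\infty}(\Omega)}<\infty$. Under these assumptions elliptic theory provides a uniform bound on $\|\nabla v\|_{L^\infty(\Omega)}$ via the second equation, so the quasilinear operator driving $u$ becomes uniformly parabolic with smooth coefficients that are uniformly bounded on $\Omega\times[0,\tmax)$. De~Giorgi--Nash--Moser followed by Schauder estimates then yield a uniform bound in $C^{2+\alpha,1+\alpha/2}(\overline{\Omega}\times[0,\tmax])$, after which the local existence step can be restarted at $\tmax$, contradicting maximality. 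The main technical obstacle here is exactly the two-fold degeneration of the flux-limited diffusion, both as $u\downarrow 0$ and as $|\nabla u|\to\infty$; this is precisely why this first criterion must allow two failure modes and why the sharper form \eqref{extensibility;criterion} in Theorem \ref{mainthm1} requires the substantially more delicate gradient analysis carried out in the subsequent sections.
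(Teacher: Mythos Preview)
Your proposal is correct and follows essentially the same approach as the paper: both reduce to a scalar quasilinear problem via $v=v[u]$, exploit that the flux-limited diffusion is uniformly parabolic once $u$ is bounded below and $|\nabla u|$ is bounded above, and then run a fixed-point/contraction argument followed by a standard bootstrap for the extensibility criterion. The only cosmetic difference is that the paper enforces the non-degeneracy by truncating the coefficients with cut-off functions $\psi_\ep,\varphi_\ep$ (so the regularized equation is globally uniformly parabolic and coincides with the original one while $(u,|\nabla u|)$ stays in the good range), whereas you achieve the same effect by restricting the fixed-point space $X_T$ directly.
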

\begin{proof}
The proof is based on that of
 \cite[Lemma 2.1]{B-W}. Put
\begin{align*}
\ep:=\min\left\{\frac{1}{2}\inf_{x\in\Omega} u_0(x), 
\frac{1}{2\|u_0\|_{L^\infty(\Omega)}}, 
\frac{1}{2\|\nabla u_0\|_{L^\infty(\Omega)}}, 2 \right\}
\end{align*}
and let $\psi_\ep, \varphi_\ep\in C^\infty(\mathbb{R})$ be cut-off 
 functions satisfying 
\begin{align*}
\frac{\ep}{2}\leq \psi_\ep(s) \leq \frac{2}{\ep}\quad \mbox{for all} \ 
s\in \mathbb{R} \qquad \mbox{and} \qquad \psi_\ep(s)=s\quad
\mbox{for all}\  s\in \left(\ep,\frac{1}{\ep}\right)
\end{align*}
as well as
\begin{align*}
\frac{\ep}{2}\leq \varphi_\ep(s) \leq \frac{2}{\ep}
\quad \mbox{for all}\ s\in \mathbb{R} 
\qquad \mbox{and} \qquad 
\varphi_\ep(s)=s\quad
\mbox{for all}\  s\in \left(\ep,\frac{1}{\ep}\right).
\end{align*}
Then we can see that the function 
$a_\ep\in C^\infty(\mathbb{R}\times\mathbb{R}^n)$ defined as 
\begin{align*}
a_\ep(s,\xi) 
:= \frac{\psi^p_\ep(s)}{\sqrt{\psi^2_\ep(s)+\varphi^2_\ep(|\xi|)}},\quad 
s\in \mathbb{R},\  \xi\in \mathbb{R}^n,
\end{align*}
fulfills 
\begin{align*}
\frac{\ep^{p+1}}{2^{p+1}\sqrt{2}}\leq a_\ep(s,\xi)
\leq \left(\frac{2}{\ep}\right)^{p-1}  
\end{align*}
for all $s\in \mathbb{R}$ and all $\xi\in \mathbb{R}^n$. Therefore, applying a fixed point argument enables us to take 
 $T_\ep>0$ and functions 
  \begin{align*}
  u_\ep\in C^{2,1}(\overline{\Omega}\times[0,T_\ep))
\quad \mbox{and}\quad 
v_\ep\in C^{2,0}(\overline{\Omega}\times[0,T_\ep)) 
\end{align*}
 such that $(u_\ep,v_\ep)$ is a classical solution of the problem 
 \begin{equation*}
     \begin{cases}
         u_t = \nabla\cdot\left(a_\ep(s,\nabla u) \nabla u \right) 
         -\chi \nabla\cdot\left(
         \dfrac{u^q\nabla v}{\sqrt{1 + 
         |\nabla v|^2}}\right),
         &x\in \Omega,\ t\in(0,T_\ep),
\\[2mm]
         0 = \Delta v - \mu + u, 
         &x\in \Omega,\ t\in(0,T_\ep),         
\\[2mm]
         \frac{\partial u^p}{\partial\nu}
           = \frac{\partial v}{\partial\nu}=0, 
         &x \in \partial\Omega,\ t\in(0,T_\ep),
\\[2mm]
         u(x,0) = u_0(x),
         &x \in \Omega,
     \end{cases}
 \end{equation*}
and that $u_\ep$ and $v_\ep$ are radially symmetric and positive.
Thus, aided by the argument in the proof of
\cite[Lemma 2.1]{B-W}, 
we can attain this lemma.
\end{proof}
%
%
In the following, 
we assume that $u_0$ satisfies \eqref{u0} and 
 denote by $(u,v)$ and $\tmax$
the local solution of \eqref{P} and the maximal existence time 
 which are obtained in Lemma \ref{Lem2.1}. 
Thanks to the properties that $u$ and $v$ are radially symmetric, 
we can obtain a useful identity of $u_t$. 
By introducing $r:=|x|$ we regard $u(x,t)$ and $v(x,t)$ 
as $u(r,t)$ and $v(r,t)$, respectively.
\begin{lem}\label{ut.lem}
 The solution of \eqref{P} satisfies 
 \begin{align}\label{ut}
 u_t&=\frac{u^{p+2}u_{rr}}{{\sqrt{u^2+u^2_r}}^3}
 +p\frac{u^{p-1}u^4_r}{{\sqrt{u^2+u^2_r}}^3}
 +\frac{n-1}{r}\cdot\frac{u^pu_r}{\sqrt{u^2+u^2_r}}
 +(p-1)\frac{u^{p+1}u^2_r}{{\sqrt{u^2+u^2_r}}^3}
 \\
\nonumber &\quad\,
-q\chi\frac{u^{q-1}u_rv_r}{\sqrt{1+v^2_r}}
-\chi\frac{u^q(\mu-u)}{{\sqrt{1+v^2_r}}^3}
-\chi\frac{n-1}{r}\cdot\frac{u^qv^3_r}{{\sqrt{1+v^2_r}}^3} 
 \end{align}
  for all $ r \in (0, R)$ and all $t \in (0, \tmax)$.
\end{lem}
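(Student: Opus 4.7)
The plan is a direct calculation in radial coordinates: express both divergences in \eqref{P} as $\partial_r$ of scalar flux functions plus the standard $\frac{n-1}{r}$ correction, carry out the differentiations, and then eliminate $v_{rr}$ via the second equation of \eqref{P}.

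First I would set $\vec{e}_r := x/|x|$, so that for the radial functions $u$ and $v$ one has $\nabla u = u_r\vec{e}_r$, $\nabla v = v_r\vec{e}_r$, and the divergence of a radial vector field $F(r)\vec{e}_r$ equals $F_r + \frac{n-1}{r}F$. Applied to the diffusion flux this gives
\begin{align*}
\nabla\!\cdot\!\Bigl(\tfrac{u^p\nabla u}{\sqrt{u^2+|\nabla u|^2}}\Bigr)
= \partial_r\!\Bigl(\tfrac{u^p u_r}{\sqrt{u^2+u_r^2}}\Bigr)
+ \tfrac{n-1}{r}\cdot\tfrac{u^p u_r}{\sqrt{u^2+u_r^2}}.
\end{align*}
The quotient rule, after clearing $(u^2+u_r^2)$ in the numerator, yields
\begin{align*}
\partial_r\!\Bigl(\tfrac{u^p u_r}{\sqrt{u^2+u_r^2}}\Bigr)
=\tfrac{u^{p+2}u_{rr}+p\,u^{p-1}u_r^4+(p-1)u^{p+1}u_r^2}{\sqrt{u^2+u_r^2}^{\,3}},
\end{align*}
where the cancellation that produces the coefficient $(p-1)$ is the only step requiring a bit of care.

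Next I would treat the chemotactic flux by the product rule $\nabla\!\cdot(u^q\vec F)=qu^{q-1}u_r v_r/\sqrt{1+v_r^2}+u^q\nabla\!\cdot\vec F$ with $\vec F=\nabla v/\sqrt{1+|\nabla v|^2}$. A short computation analogous to the one above shows
\begin{align*}
\nabla\!\cdot\!\Bigl(\tfrac{\nabla v}{\sqrt{1+|\nabla v|^2}}\Bigr)
= \tfrac{v_{rr}}{\sqrt{1+v_r^2}^{\,3}}+\tfrac{n-1}{r}\cdot\tfrac{v_r}{\sqrt{1+v_r^2}}.
\end{align*}
At this point I would invoke the second equation of \eqref{P}, which in radial form reads $v_{rr}+\tfrac{n-1}{r}v_r=\mu-u$, to substitute $v_{rr}=\mu-u-\tfrac{n-1}{r}v_r$. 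Combining the two $\frac{n-1}{r}$ contributions via the identity
\begin{align*}
\tfrac{1}{\sqrt{1+v_r^2}^{\,3}}-\tfrac{1}{\sqrt{1+v_r^2}}
=-\tfrac{v_r^2}{\sqrt{1+v_r^2}^{\,3}}
\end{align*}
produces precisely the term $-\chi\tfrac{n-1}{r}\cdot\tfrac{u^q v_r^3}{\sqrt{1+v_r^2}^{\,3}}$ together with $-\chi u^q(\mu-u)/\sqrt{1+v_r^2}^{\,3}$.

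There is no substantive obstacle here, only bookkeeping; the only place mistakes can sneak in is the collection of powers of $\sqrt{u^2+u_r^2}$ in the diffusion computation and the sign arithmetic when simplifying the two $(n-1)/r$ terms using the elliptic equation. Assembling the diffusion and chemotaxis contributions and relabelling then gives \eqref{ut}.
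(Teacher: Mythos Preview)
Your proposal is correct and follows essentially the same direct radial computation as the paper: both expand the two divergence terms in radial coordinates, simplify, and then eliminate $v_{rr}$ via $v_{rr}+\tfrac{n-1}{r}v_r=\mu-u$. The only cosmetic difference is that you peel off $u^q$ from the chemotactic flux via the product rule before computing $\nabla\!\cdot(\nabla v/\sqrt{1+|\nabla v|^2})$, whereas the paper differentiates $r^{n-1}u^q v_r/\sqrt{1+v_r^2}$ all at once and regroups afterward; the algebra is identical.
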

\begin{proof}
We rewrite \eqref{P} as
\begin{align}\label{ut;first}
 u_t &= \frac{1}{r^{n-1}}\left(r^{n-1}\cdot\frac{u^pu_r}
     {\sqrt{u^2+u^2_r}}\right)_r
     -\chi\frac{1}{r^{n-1}}\left(r^{n-1}\cdot
     \frac{u^qv_r}{\sqrt{1+v^2_r}}\right)_r\\ 
     \nonumber
     &=\frac{n-1}{r}\cdot\frac{u^pu_r}{\sqrt{u^2+u^2_r}}
     +\frac{pu^{p-1}u^2_r+u^pu_{rr}}{\sqrt{u^2+u^2_r}}
     -\frac{u^pu_r(2uu_r+2u_ru_{rr})}{2\,{\sqrt{u^2+u^2_r}}^3} \\ 
     &\quad\, \nonumber
     -\chi\frac{n-1}{r}\cdot\frac{u^qv_r}{\sqrt{1+v^2_r}}
     -\chi\frac{qu^{q-1}u_rv_r+u^qv_{rr}}{\sqrt{1+v^2_r}}
     +\chi\frac{u^qv_r\cdot2v_rv_{rr}}{2\,{\sqrt{1+v^2_r}}^3}
     \end{align}
      for all $ r \in (0, R)$ and all $t \in (0, \tmax)$. 
Here we simplify the second and third terms as
      \begin{align*}
     &\frac{pu^{p-1}u^2_r+u^pu_{rr}}{\sqrt{u^2+u^2_r}}
     -\frac{u^pu_r(2uu_r+2u_ru_{rr})}{2\,{\sqrt{u^2+u^2_r}}^3}\\
   &\qquad=\frac{pu^{p-1}u^2_r}{\sqrt{u^2+u^2_r}} 
   +\frac{u^pu_{rr}}{\sqrt{u^2+u^2_r}}
   -\frac{u^{p+1}u^2_r+u^pu^2_ru_{rr}}
   {\,{\sqrt{u^2+u^2_r}}^3} \\ 
   &\qquad
   =\frac{u^{p+2}u_{rr}}{{\sqrt{u^2+u^2_r}}^3}
   +p\frac{u^{p-1}u^4_r}{{\sqrt{u^2+u^2_r}}^3}
   +(p-1)\frac{u^{p+1}u^2_r}{\,{\sqrt{u^2+u^2_r}}^3}
      \end{align*}
 for all $ r \in (0, R)$ and all $t \in (0, \tmax)$. Similarly we 
simplify the fourth, fifth, sixth terms 
on the right-hand side of \eqref{ut;first} to obtain
\begin{align*}
 -\chi&\frac{n-1}{r}\cdot\frac{u^qv_r}{\sqrt{1+v^2_r}}
      -\frac{qu^{q-1}u_rv_r+u^qv_{rr}}{\sqrt{1+v^2_r}}
     +\frac{u^qv_r^2v_{rr}}{{\sqrt{1+v^2_r}}^3}
    \\   \nonumber
  &=-\chi\frac{n-1}{r}
  \cdot\frac{u^qv_r(1+v^2_r)}{\sqrt{1+v^2_r}^3}
     -q\chi\frac{u^{q-1}u_rv_r}{\sqrt{1+v^2_r}}
     -\chi\frac{u^qv_{rr}(1+v^2_r)}{\sqrt{1+v^2_r}^3}
       +\chi\frac{u^qv_r^2v_{rr}}{{\sqrt{1+v^2_r}}^3}
         \\   \nonumber
  &=-q\chi\frac{u^{q-1}u_rv_r}{\sqrt{1+v^2_r}}
-\chi\frac{n-1}{r}\cdot\frac{u^qv^3_r}{{\sqrt{1+v^2_r}}^3} 
+\chi\frac{u^q}{{\sqrt{1+v^2_r}}^3}\left(v_{rr}+\frac{n-1}{r}v_r\right).
     \end{align*}
Using 
\[
v_{rr}+\frac{n-1}{r}v_r=\mu-u,
\] 
which can be seen 
from the second equation of \eqref{P}, 
we have the conclusion of this lemma.
\end{proof}
%
%
Next we establish a parabolic partial  differential equation 
which is satisfied by $u_{r}$.
In the following lemma we will also introduce important operators 
$\mathcal{P}$ and $\mathcal{Q}$.
\begin{lem}\label{urt.lem}
 The solution of \eqref{P} satisfies 
  \begin{align*}
 u_{rt}
 =A_1(r,t)u_{rrr}+A_2(r,t)u_{rr}+a_3(r,t)u^2_r+A_3(r,t)u_r+A_4(r,t)
 \end{align*}
  for all $ r \in (0, R)$ and all $t \in (0, \tmax)$, 
   \begin{align*}
A_1(r,t)&:=\frac{u^{p+2}}{{\sqrt{u^2+u^2_r}}^3},\\
  A_2(r,t)&:=(p+2)\frac{u^{p+1}u^3_r}{{\sqrt{u^2+u^2_r}}^5}
  -3\frac{u^{p+2}u_ru_{rr}}{{\sqrt{u^2+u^2_r}}^5}
  +(p-1)\frac{u^{p+3}u_r}{{\sqrt{u^2+u^2_r}}^5}
  +4p\frac{u^{p+1}u^3_r}{{\sqrt{u^2+u^2_r}}^5}
  \\
  &\quad\ 
  +p\frac{u^{p-1}u^5_r}{{\sqrt{u^2+u^2_r}}^5}
  +\frac{n-1}{r}\cdot\frac{u^{p+2}}{{\sqrt{u^2+u^2_r}}^3}
  +(p-1)\frac{u^{p+1}u_r}{{\sqrt{u^2+u^2_r}}^5}(2u^2-u^2_r)
  \\
  &\quad\ 
  -q\chi\frac{u^{q-1}u_rv_r}{\sqrt{1+v^2_r}},\\
      a_3(r,t)&:=p(p-1)\frac{u^{p-2}u^5_r}
      {{\sqrt{u^2+u^2_r}}^5}
      -q(q-1)\chi\frac{u^{q-2}v_r}{\sqrt{1+v^2_r}}
      \end{align*}
and
\begin{align*}
A_3(r,t)&:=p(p-4)\frac{u^pu^4_r}{{\sqrt{u^2+u^2_r}}^5}
-\frac{n-1}{r^2}\cdot\frac{u^p}{\sqrt{u^2+u^2_r}}+\Phi(r,t),\\
A_4(r,t)&:=p\frac{n-1}{r}\cdot\frac{u^{p-1}u^4_r}
{{\sqrt{u^2+u^2_r}}^3}+\Psi(r,t)
\end{align*}
as well as
\begin{align*}
   \Phi(r,t)&:=(p-1)(p-2)\frac{u^{p+2}u_r}{{\sqrt{u^2+u^2_r}}^5}
         +(p-1)(p+1)\frac{u^pu^3_r}{{\sqrt{u^2+u^2_r}}^5}
         -q\chi\mu\frac{u^{q-1}}{{\sqrt{1+v^2_r}}^3}
         \\
         &\quad\ 
         +(q+1)\chi\frac{u^q}{{\sqrt{1+v^2_r}}^3}
         -q\chi\frac{u^{q-1}v_{rr}}{{\sqrt{1+v^2_r}}^3}
         +q\chi\frac{u^{q-1}v^2_rv_{rr}}{{\sqrt{1+v^2_r}}^3}
         -q\chi\frac{n-1}{r}\cdot\frac{u^{q-1}v^3_r}
         {{\sqrt{1+v^2_r}}^3},
\end{align*}
and
\begin{align*}
  \Psi(r,t)&:=(p-1)\frac{n-1}{r}\cdot\frac{u^{p+1}u^2_r}
{{\sqrt{u^2+u^2_r}}^3}
+3\chi\mu\frac{u^qv_rv_{rr}}{{\sqrt{1+v^2_r}}^5}
-3\chi\frac{u^{q+1}v_rv_{rr}}{{\sqrt{1+v^2_r}}^5} 
\\
&\quad\ 
+\chi\frac{n-1}{r^2}\cdot\frac{u^qv^3_r}{{\sqrt{1+v^2_r}}^3}
-3\chi\frac{n-1}{r}\cdot\frac{u^qv^2_rv_{rr}}{{\sqrt{1+v^2_r}}^5}
   \end{align*}
  for $ r \in (0, R)$ and $t \in (0, \tmax)$. 
  In particular, 
  \begin{align*}
   (\mathcal{P}u_r)(r,t)=0
   \end{align*}
for all $r \in (0,R)$ and all $t \in (0, \tmax)$, 
with $\mathcal{P}$ given by 
   \begin{align}\label{pphi}
   (\mathcal{P}\varphi)(r,t)
   :={\varphi}_t-A_1(r,t){\varphi}_{rr}-A_2(r,t){\varphi}_r
   -a_3(r,t){\varphi}^2-A_3(r,t)\varphi-A_4(r,t)
   \end{align}
   for $r \in (0, R)$ and $t \in (0, \tmax)$. Likewise,
   \begin{align*}
   (\mathcal{Q}u_r)(r,t)=0
   \end{align*}
for all $r \in (0,R)$ and all $t \in (0, \tmax)$, 
 with $\mathcal{Q}$ given by
   \begin{align}\label{qphi}
   (\mathcal{Q}\varphi)(r,t)
   :={\varphi}_t-A_1(r,t){\varphi}_{rr}-A_2(r,t){\varphi}_r
   -a_3(r,t){\varphi}^2
   -\widetilde{A}_3(r,t)\varphi-\widetilde{A}_4(r,t)
   \end{align}
 for $ r \in (0, R)$ and $t \in (0, \tmax)$,
 where
 \begin{align}\label{Atilde}
 \widetilde{A}_3(r,t)
 &:=p\frac{n-1}{r}\cdot\frac{u^{p-1}u^3_r}
 {{\sqrt{u^2+u^2_r}}^3}+\Phi(r,t),
 \\ \notag
\widetilde{A}_4(r,t)&:=p(p-4)\frac{u^pu^4_r}
{{\sqrt{u^2+u^2_r}}^5}
-\frac{n-1}{r^2}\cdot\frac{u^p}{\sqrt{u^2+u^2_r}}+\Psi(r,t)
 \end{align}
  for $ r \in (0, R)$ and $t \in (0, \tmax)$.
\end{lem}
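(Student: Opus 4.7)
The plan is to derive the identity
$u_{rt} = A_1 u_{rrr} + A_2 u_{rr} + a_3 u_r^2 + A_3 u_r + A_4$
directly from Lemma \ref{ut.lem} by differentiating the expression for $u_t$ with respect to $r$, and then to read off both conclusions $\mathcal{P}u_r=0$ and $\mathcal{Q}u_r=0$ as two equivalent regroupings of this identity. The interchange $u_{rt} = \partial_r u_t$ is legitimate thanks to the smoothness of $u$ in $\Omega\times(0,\tmax)$ obtained via parabolic bootstrapping on the approximating solutions from Lemma \ref{Lem2.1}, and the spatial regularity of $v$ needed to make sense of $v_{rrr}$ comes from elliptic regularity applied to $\Delta v=\mu-u$.

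Concretely, I would differentiate each of the seven summands on the right-hand side of Lemma \ref{ut.lem} in turn, using the product and chain rules together with the derivative formulas $\partial_r(u^2+u_r^2)^{-k/2} = -k(u^2+u_r^2)^{-(k+2)/2}(uu_r+u_ru_{rr})$ and $\partial_r(1+v_r^2)^{-k/2} = -k(1+v_r^2)^{-(k+2)/2}v_rv_{rr}$. The unique third-order contribution comes from differentiating $u_{rr}$ inside the principal summand $u^{p+2}u_{rr}/(u^2+u_r^2)^{3/2}$ and yields $A_1 u_{rrr}$ with $A_1 = u^{p+2}/(u^2+u_r^2)^{3/2}$. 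The ``new'' quadratic coefficient $a_3$ originates from two analogous sources: differentiating $u^{p-1}$ inside $p u^{p-1}u_r^4/(u^2+u_r^2)^{3/2}$ produces $p(p-1)u^{p-2}u_r^5/(u^2+u_r^2)^{3/2}$, and using the algebraic split $u_r^5/(u^2+u_r^2)^{3/2} = u_r^7/(u^2+u_r^2)^{5/2} + u^2 u_r^5/(u^2+u_r^2)^{5/2}$ routes the first piece into $a_3 u_r^2$ and the second into $A_3 u_r$; differentiating $u^{q-1}$ in the chemotaxis summand $-q\chi u^{q-1}u_rv_r/\sqrt{1+v_r^2}$ similarly supplies the second summand of $a_3$. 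When differentiating the term $-\chi u^q(\mu-u)/(1+v_r^2)^{3/2}$, I use $\partial_r(\mu-u) = -u_r$ and allocate the result to $\Phi$ (summands carrying a $u_r$ factor) and $\Psi$ (summands that pick up the factor $v_rv_{rr}$ from the denominator). Sorting the output of all seven differentiations by highest derivative order ($u_{rrr}$, $u_{rr}$) and by powers of $u_r$ produces exactly the stated formulas for $A_2,A_3,A_4,\Phi,\Psi$, so $(\mathcal{P}u_r)(r,t)=0$ is immediate from the definition \eqref{pphi}.

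To derive $(\mathcal{Q}u_r)(r,t)=0$, I observe that when the operator is applied at $\varphi=u_r$, a factor of $u_r$ appearing in a coefficient may be freely reassigned between the slot ``coefficient of $\varphi$'' and the slot ``constant in $\varphi$''. Comparing \eqref{pphi}, \eqref{qphi} and the definitions in \eqref{Atilde}, the passage from $(A_3,A_4)$ to $(\widetilde{A}_3,\widetilde{A}_4)$ amounts to such a reassignment of the $\frac{n-1}{r}$- and $\frac{n-1}{r^2}$-contributions and of the $p(p-4)$-contribution. Once the bookkeeping identity $A_3 u_r+A_4 = \widetilde{A}_3 u_r+\widetilde{A}_4$ is verified by this comparison, $\mathcal{Q}u_r=0$ follows at once from $\mathcal{P}u_r=0$ without further computation.

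The main obstacle is purely combinatorial bookkeeping: each of the seven summands in Lemma \ref{ut.lem} produces three to five new summands under differentiation, and correctly consolidating the resulting roughly thirty pieces --- with all signs, powers of $u$, $u_r$, $v_r$, and the correct denominators --- and allocating each piece to the right coefficient among $A_1,A_2,a_3,A_3,A_4,\Phi,\Psi$ is delicate. No new analytic idea beyond Lemma \ref{ut.lem}, the second equation of \eqref{P}, and interior regularity is required.
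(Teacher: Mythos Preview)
Your proposal is correct and follows essentially the same route as the paper: differentiate the identity for $u_t$ from Lemma~\ref{ut.lem} term by term, isolate the $u_{rrr}$ contribution, split off the two ``new'' quadratic pieces in $u_r$ to form $a_3$, and sort the remainder into $A_2,A_3,A_4$ (the paper carries this out only for the block coming from $p\,u^{p-1}u_r^4/(u^2+u_r^2)^{3/2}$ and then defers the rest to the analogous computation in \cite{B-W}). Your observation that $\mathcal{Q}u_r=0$ reduces to the bookkeeping identity $A_3u_r+A_4=\widetilde{A}_3u_r+\widetilde{A}_4$ is exactly the intended mechanism.
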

\begin{proof}
We first calculate a partial derivative of \eqref{ut} 
with respect to $r$ to obtain
  \begin{align}\label{urt}
 u_{rt}&=\frac{u^{p+2}u_{rrr}}{{\sqrt{u^2+u^2_r}}^3}
 +(p+2)\frac{u^{p+1}u_ru_{rr}}{{\sqrt{u^2+u^2_r}}^3}
  -\frac{3}{2}\cdot\frac{u^{p+2}u_{rr}(2uu_r+2u_ru_{rr})}
  {{\sqrt{u^2+u^2_r}}^5}\\
\nonumber &\quad\,
 +4p\frac{u^{p-1}u^3_ru_{rr}}{{\sqrt{u^2+u^2_r}}^3}
 +p(p-1)\frac{u^{p-2}u^5_r}{{\sqrt{u^2+u^2_r}}^3}
 -\frac{3}{2}p\cdot\frac{u^{p-1}u^4_r(2uu_r+2u_ru_{rr})}
 {{\sqrt{u^2+u^2_r}}^5}\\
\nonumber &\quad\,
 -\frac{n-1}{r^2}\cdot\frac{u^pu_r}{\sqrt{u^2+u^2_r}}
 +\frac{n-1}{r}\cdot\frac{u^pu_{rr}}{{\sqrt{u^2+u^2_r}}}
 +p\frac{n-1}{r}\cdot\frac{u^{p-1}u^2_r}{\sqrt{u^2+u^2_r}}
 \\
\nonumber &\quad\,
 -\frac{1}{2}\cdot\frac{n-1}{r}\cdot\frac{u^pu_r(2uu_r+2u_ru_{rr})}
 {{\sqrt{u^2+u^2_r}}^3}
 +(p-1)(p+1)\frac{u^pu^3_r}{{\sqrt{u^2+u^2_r}}^3}
 \\
 \nonumber &\quad\,
 +2(p-1)\frac{u^{p+1}u_ru_{rr}}{{\sqrt{u^2+u^2_r}}^3}
  -\frac{3}{2}(p-1)\frac{u^{p+1}u^2_r(2uu_r+2u_ru_{rr})}
  {{\sqrt{u^2+u^2_r}}^5}
  \\
\nonumber &\quad\,
 -q\chi\mu\frac{u^{q-1}u_r}{{\sqrt{1+v^2_r}}^3}
 +(q+1)\chi\frac{u^qu_r}{{\sqrt{1+v^2_r}}^3}
 +\frac{3}{2}\chi\frac{u^q(\mu-u)\cdot2v_rv_{rr}}
 {{\sqrt{1+v^2_r}}^5}
 \\
\nonumber &\quad\,
 -q(q-1)\chi\frac{u^{q-2}u^2_rv_r}{\sqrt{1+v^2_r}}
 -q\chi\frac{u^{q-1}u_rv_ru_{rr}}{\sqrt{1+v^2_r}}
 -q\chi\frac{u^{q-1}u_rv_{rr}}{\sqrt{1+v^2_r}}
 \\
\nonumber &\quad\,
 +\frac{1}{2}\cdot q\chi\frac{u^{q-1}u_rv_r\cdot2v_rv_{rr}}
 {{\sqrt{1+v^2_r}}^3}
 +\chi\frac{n-1}{r^2}\cdot\frac{u^qv^3_r}
 {{\sqrt{1+v^2_r}}^3}
 -q\chi\frac{n-1}{r}\cdot\frac{u^{q-1}u_rv^3_r}
 {{\sqrt{1+v^2_r}}^3}
 \\
\nonumber &\quad\,
 -3\chi\frac{n-1}{r}\cdot
 \frac{u^qv^2_rv_{rr}}{{\sqrt{1+v^2_r}}^3}
 +\frac{3}{2}\chi\frac{n-1}{r}\cdot
 \frac{u^qv^3_r\cdot2v_rv_{rr}}{{\sqrt{1+v^2_r}}^5}
 \end{align}
 for all $ r \in (0, R)$ and all $t \in (0, \tmax)$. 
   By simplifying the fourth, fifth and sixth terms
    on the right-hand side of \eqref{urt} according to
 \begin{align*}
  4p&\frac{u^{p-1}u^3_ru_{rr}}{{\sqrt{u^2+u^2_r}}^3}
     +p(p-1)\frac{u^{p-2}u^5_r}{{\sqrt{u^2+u^2_r}}^3}
        -\frac{3}{2}p\frac{u^{p-1}u^4_r(2uu_r+2u_ru_{rr})}
        {{\sqrt{u^2+u^2_r}}^5}\\
     &=4p\frac{u^{p+1}u^3_ru_{rr}}{{\sqrt{u^2+u^2_r}}^5}
       +4p\frac{u^{p-1}u^5_ru_{rr}}{{\sqrt{u^2+u^2_r}}^5}
       +p(p-1)\frac{u^pu^5_r}{{\sqrt{u^2+u^2_r}}^5}
         \\
       &\quad\,
       +p(p-1)\frac{u^{p-2}u^7_r}{{\sqrt{u^2+u^2_r}}^5}
       -3p\frac{u^pu^5_r}{{\sqrt{u^2+u^2_r}}^5}
       -3p\frac{u^{p-1}u^5_ru_{rr}}
       {{\sqrt{u^2+u^2_r}}^5}\\
    &=4p\frac{u^{p+1}u^3_ru_{rr}}{{\sqrt{u^2+u^2_r}}^5}
       +p\frac{u^{p-1}u^5_ru_{rr}}{{\sqrt{u^2+u^2_r}}^5}
       +p(p-4)\frac{u^pu^5_r}{{\sqrt{u^2+u^2_r}}^5}
       +p(p-1)\frac{u^{p-2}u^7_r}{{\sqrt{u^2+u^2_r}}^5},
  \end{align*}
 arguments similar to those in the proof of 
 \cite[Lemma \ref{urt.lem}]{B-W} entail this lemma. 
  \end{proof}
\begin{remark}
In the proof of Lemma \ref{urt.lem},
 the difference between our study and \cite{B-W} is the fact that there exist new terms  
 $p(p-1)\frac{u^{p-2}u^7_r} {{\sqrt{u^2+u^2_r}}^5}
                  -q(q-1)\chi\frac{u^{q-2}u^2_rv_r}{\sqrt{1+v^2_r}}$ 
which do not exist in the case that $p=q=1$. 
Then we will control these terms by introducing $a_3(r,t)u^2_r$.
The rest terms in \eqref{urt} are adequately distributed between 
$A_3(r,t)$ and  $A_4(r,t)$, as well as $\widetilde{A_3}(r,t)$ and  
$\widetilde{A_4}(r,t)$.
\end{remark}
The following lemmas are utilized to establish 
useful estimates for $v$. Since the proofs of these lemmas are in 
\cite[Lemmas 2.4, 2.5]{B-W}, we provide only
 the statements of lemmas.
%
%
\begin{lem}\label{lem2.4}
 Assume that $u_0$ satisfies \eqref{u0}. Then
 \begin{align*}
  v_r(r,t)=\frac{{\mu}r}{n}-r^{1-n}\cdot\int_{0}^{r}
                \rho^{n-1}u(\rho,t)\,d\rho  
 \end{align*}
 and 
 \begin{align*}
   v_{rr}(r,t)=\frac{{\mu}}{n}-u
                   +\frac{n-1}{r^n}\cdot\int_{0}^{r}
                      \rho^{n-1}u(\rho,t)\,d\rho  
 \end{align*}
 for all $ r \in (0, R)$ and all $t \in (0, \tmax)$. Moreover, we have
 \begin{align*}
  v_{rt}(r,t)=-\frac{u^pu_r}{\sqrt{u^2+u^2_r}}
             +\chi\frac{u^qv_r}{\sqrt{1+v^2_r}}
 \end{align*}
 for all $ r \in (0, R)$ and all $t \in (0, \tmax)$.
\end{lem}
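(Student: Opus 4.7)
The plan is to obtain all three identities from the second equation $0=\Delta v - \mu + u$ written in radial coordinates, which reads
\[
(r^{n-1}v_r)_r = r^{n-1}(\mu - u).
\]
Since $v$ is radially smooth we have $v_r(0,t)=0$, and integrating from $0$ to $r$ gives $r^{n-1}v_r(r,t) = \frac{\mu r^n}{n} - \int_0^r \rho^{n-1}u(\rho,t)\,d\rho$. Dividing by $r^{n-1}$ yields the first claimed formula. For the second identity I would just differentiate this formula in $r$: the derivative of $r^{1-n}\int_0^r\rho^{n-1}u\,d\rho$ produces by the product rule the term $-(n-1)r^{-n}\int_0^r\rho^{n-1}u\,d\rho$ together with $r^{1-n}\cdot r^{n-1}u = u$, which after sign bookkeeping gives exactly $v_{rr} = \mu/n - u + \frac{n-1}{r^n}\int_0^r\rho^{n-1}u\,d\rho$.

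For the formula for $v_{rt}$ the plan is to differentiate the just-derived expression for $v_r$ in $t$ (the $\mu r/n$ term is constant in $t$), so
\[
v_{rt}(r,t) = -r^{1-n}\int_0^r \rho^{n-1}u_t(\rho,t)\,d\rho,
\]
and then to insert the radial form of the first equation of \eqref{P}, which is already written out in equation \eqref{ut;first} of the paper as
\[
r^{n-1}u_t = \Bigl(r^{n-1}\frac{u^p u_r}{\sqrt{u^2+u_r^2}}\Bigr)_r - \chi\Bigl(r^{n-1}\frac{u^q v_r}{\sqrt{1+v_r^2}}\Bigr)_r.
\]
Integrating from $0$ to $r$ collapses both right-hand terms to their boundary values at $r$, since at $\rho=0$ both $u_r$ and $v_r$ vanish (for $n\ge 2$ the factor $\rho^{n-1}$ also vanishes), so the boundary contributions at the origin disappear. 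Dividing by $r^{n-1}$ and negating then produces exactly $v_{rt} = -u^p u_r/\sqrt{u^2+u_r^2} + \chi u^q v_r/\sqrt{1+v_r^2}$.

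The only step that requires any genuine care is the vanishing of the two boundary terms at $\rho=0$ in the last calculation; one has to justify that the fluxes $\rho^{n-1}\frac{u^p u_\rho}{\sqrt{u^2+u_\rho^2}}$ and $\rho^{n-1}\frac{u^q v_\rho}{\sqrt{1+v_\rho^2}}$ tend to $0$ as $\rho\downarrow 0$. This follows immediately from the regularity $u,v\in C^{2,1}$, radial symmetry (forcing $u_r(0,t)=v_r(0,t)=0$), and boundedness of the flux-limited factors by $1$. With this in hand the three identities are direct consequences of the radial PDE and the fundamental theorem of calculus, so I do not anticipate a substantive obstacle; indeed the lemma is a direct analogue of \cite[Lemmas 2.4, 2.5]{B-W}, which the authors themselves cite.
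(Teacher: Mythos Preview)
Your argument is correct and is exactly the standard derivation: integrate the radial elliptic equation for $v$ to get $v_r$, differentiate in $r$ for $v_{rr}$, and differentiate in $t$ together with the divergence form \eqref{ut;first} of the $u$-equation to obtain $v_{rt}$. The paper itself does not spell out a proof but simply refers to \cite[Lemmas 2.4, 2.5]{B-W}, where precisely this computation is carried out; your proposal therefore coincides with the intended approach.
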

%
%
\begin{lem}\label{lem2.5}
 Let $u_0$ satisfy \eqref{u0}. Then for 
 all $ r \in (0,R)$ and all $t \in (0, \tmax)$,
  we have 
 \begin{align*}
 -\frac{{\mu}R^n}{n}\cdot r^{1-n}\leq v_r(r,t)
             \leq\frac{\mu}{n}\cdot r
 \end{align*}
 and
 \begin{align*}
|v_r(r,t)|\leq\frac{\|u(\cdot,t)\|_{L^{\infty}(0,R)}}{n}\cdot r
 \end{align*}
 as well as
 \begin{align*}
|v_{rr}(r,t)|\leq\|u(\cdot,t)\|_{L^{\infty}(0,R)}.
 \end{align*}
\end{lem}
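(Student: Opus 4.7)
The plan is to read off all three bounds from the representations of $v_r$ and $v_{rr}$ given in Lemma \ref{lem2.4}. The only ingredient beyond these identities is mass conservation: integrating the first equation of \eqref{P} over $\Omega$ and using the no-flux boundary condition yields $\int_\Omega u(x,t)\,dx = \int_\Omega u_0\,dx = \mu|\Omega|$, and reducing this to polar coordinates gives the key identity $\int_0^R \rho^{n-1} u(\rho,t)\,d\rho = \frac{\mu R^n}{n}$. A direct consequence is $\mu \leq \|u(\cdot,t)\|_{L^{\infty}(0,R)}$, which I will use repeatedly.

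For the first estimate the upper bound $v_r(r,t) \leq \frac{\mu}{n} r$ is immediate from the formula for $v_r$, since $u \geq 0$ makes the integral $\int_0^r \rho^{n-1} u\,d\rho$ non-negative. For the lower bound, non-negativity of the integrand also gives $\int_0^r \rho^{n-1} u\,d\rho \leq \int_0^R \rho^{n-1} u\,d\rho = \frac{\mu R^n}{n}$, so that $v_r(r,t) \geq \frac{\mu}{n} r - \frac{\mu R^n}{n} r^{1-n} \geq -\frac{\mu R^n}{n} r^{1-n}$ after discarding the non-negative first term. For the second estimate I would bound each of the two terms in $v_r$ separately by $\frac{\|u(\cdot,t)\|_{L^{\infty}(0,R)}}{n} r$: the first via $\mu \leq \|u(\cdot,t)\|_{L^{\infty}(0,R)}$, the second via $r^{1-n} \int_0^r \rho^{n-1} u\,d\rho \leq r^{1-n} \|u(\cdot,t)\|_{L^{\infty}(0,R)} \cdot \frac{r^n}{n}$. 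Since both summands are non-negative, the absolute value of their difference is controlled by the larger one.

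For the third estimate I would combine $\frac{\mu}{n} \leq \frac{1}{n}\|u(\cdot,t)\|_{L^{\infty}(0,R)}$, $-u \leq 0$, and $\frac{n-1}{r^n} \int_0^r \rho^{n-1} u\,d\rho \leq \frac{n-1}{n}\|u(\cdot,t)\|_{L^{\infty}(0,R)}$ to get $v_{rr} \leq \|u(\cdot,t)\|_{L^{\infty}(0,R)}$; for the lower bound I would drop the two non-negative contributions $\frac{\mu}{n}$ and the integral term and apply $-u \geq -\|u(\cdot,t)\|_{L^{\infty}(0,R)}$. No step poses a genuine obstacle: the whole proof is a sequence of pointwise estimates on the explicit integral formulas of Lemma \ref{lem2.4}. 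The only care needed is to invoke mass conservation so that $\mu$ is replaced by the time-dependent quantity $\|u(\cdot,t)\|_{L^{\infty}(0,R)}$, which is what makes these bounds useful in the subsequent comparison arguments.
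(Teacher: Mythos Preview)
Your proposal is correct and follows exactly the approach of \cite[Lemma~2.5]{B-W}, which is what the paper itself cites in lieu of a proof: the bounds are read off from the integral representations of $v_r$ and $v_{rr}$ in Lemma~\ref{lem2.4}, together with mass conservation and the pointwise inequality $\mu\le\|u(\cdot,t)\|_{L^\infty(0,R)}$. There is nothing to add.
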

%
\section{A pointwise lower estimate for $u$}\label{Sec3}
In this section we will rule out the possibility of 
$\liminf_{t\nearrow\tmax}\inf_{x\in\Omega}u(x,t)=0$
in \eqref{1st-criterion}. 
In order to attain this purpose we show the following lower 
estimate for $u$.
 \begin{lem}\label{lem;rulingout-extinction}
  Assume that $\tmax<\infty$, 
  but that
  $\sup_{(r,t)\in(0,R)\times(0, \tmax)}u(r,t)<\infty.$ Then 
    \begin{equation}\label{ulb}
    u(r,t)\geq\left(\inf_{s\in(0,R)} u_0(s)\right) e^{-\kappa t}
    \end{equation}
for all $ r \in (0, R)$ and all $t \in (0, \tmax)$, where 
    \begin{equation}\label{kappa}
    \kappa:=2\chi\mu\|u\|^{q-1}
    _{L^{\infty}((0,R)\times(0,\tmax))}.
    \end{equation}
 \end{lem}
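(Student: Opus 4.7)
The strategy is to compare $u$ with the ODE solution $\eta(t) := \bigl(\inf_{s \in (0,R)} u_0(s)\bigr)\, e^{-\kappa t}$, which satisfies $\eta'(t) = -\kappa\eta(t)$, by tracking the spatial minimum $m(t) := \min_{r \in [0,R]} u(r,t)$. For each $t \in (0, \tmax)$, let $r_*(t) \in [0,R]$ realize this minimum. Radial symmetry (at $r_* = 0$), the no-flux condition $\partial u^p/\partial \nu = 0$ together with positivity of $u$ (at $r_* = R$), and standard calculus (at interior $r_*$) all ensure $u_r(r_*, t) = 0$ and $u_{rr}(r_*, t) \geq 0$. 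Substituting $u_r = 0$ into the identity from Lemma \ref{ut.lem} makes every term containing $u_r$ as a factor vanish, so
\[
 u_t(r_*, t) = u^{p-1}(r_*,t)\, u_{rr}(r_*,t) \;-\; \chi\, \frac{u^q(\mu-u)}{(1+v_r^2)^{3/2}} \;-\; \chi\, \frac{n-1}{r_*}\cdot \frac{u^q\, v_r^3}{(1+v_r^2)^{3/2}},
\]
whose leading diffusive term is non-negative.

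Next I will bound each of the two negative contributions from below by $-\tfrac{\kappa}{2}\, u(r_*, t)$. The first is easy: if $u \geq \mu$ the term is already non-negative, and otherwise the trivial estimates $(1+v_r^2)^{-3/2} \leq 1$ and $u^{q-1} \leq \|u\|_{L^\infty}^{q-1}$ yield $-\chi u^q(\mu-u)/(1+v_r^2)^{3/2} \geq -\chi\mu\, \|u\|_{L^\infty}^{q-1}\, u = -\tfrac{\kappa}{2}\, u$. The second contribution is the main obstacle, precisely because of the singular factor $1/r_*$. I will dispose of it by splitting on the sign of $v_r$: when $v_r < 0$ the term is already non-negative; when $v_r \geq 0$, I combine the elementary bound $v_r^3/(1+v_r^2)^{3/2} \leq v_r$ with the key estimate $0 \leq v_r(r,t)/r \leq \mu/n$, which is read off from Lemma \ref{lem2.4} and recorded in Lemma \ref{lem2.5}, to obtain
\[
 -\chi\, \frac{n-1}{r_*}\cdot \frac{u^q v_r^3}{(1+v_r^2)^{3/2}} \;\geq\; -\chi\, \frac{(n-1)\mu}{n}\, u^q \;\geq\; -\chi\mu\, \|u\|_{L^\infty}^{q-1}\, u \;=\; -\tfrac{\kappa}{2}\, u.
\]

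Combining these bounds gives $u_t(r_*, t) \geq -\kappa\, m(t)$, whence a standard Danskin-type envelope argument yields $m'(t) \geq -\kappa\, m(t)$ in the upper-Dini sense, and Gronwall's inequality produces $m(t) \geq m(0)\, e^{-\kappa t}$, which is precisely \eqref{ulb}. Equivalently, I may perform the comparison at the level of the non-degenerate approximating solutions $\uep$ from Lemma \ref{Lem2.1}, invoke the parabolic maximum principle between $\uep$ and $\eta$ (whose initial/boundary data satisfy $\uep \geq \eta$), and then pass to the limit. The crucial and slightly delicate point throughout is that the singular factor $1/r$ is absorbed by the rate at which $v_r$ vanishes at the origin, and that this rate is controlled precisely by $\mu$ rather than by $\|u\|_{L^\infty}$, which is what allows $\kappa$ to take the exact form prescribed in \eqref{kappa}.
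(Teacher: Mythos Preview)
Your proposal is correct and follows essentially the same route as the paper: both arguments isolate the two zero-order terms $-\chi u^q(\mu-u)/(1+v_r^2)^{3/2}$ and $-\chi\frac{n-1}{r}\cdot\frac{u^q v_r^3}{(1+v_r^2)^{3/2}}$, bound the first trivially by $-\tfrac{\kappa}{2}u$, and handle the singular second term via the one-sided bound $v_r\le \mu r/n$ from Lemma~\ref{lem2.5} to get another $-\tfrac{\kappa}{2}u$. The only difference is cosmetic: the paper keeps the full differential inequality $u_t\ge a_1 u_{rr}+a_{21}u_r+\tfrac{a_{22}}{r}u_r-\kappa u$ on all of $(0,R)\times(0,\tmax)$ and then invokes the comparison argument of \cite[Lemma~3.2]{B-W} against $\eta(t)$, whereas you evaluate at a minimizing point $r_*(t)$ and run an envelope/Gronwall argument; these are standard equivalent packagings of the same maximum-principle idea.
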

\begin{proof}
 We rewrite \eqref{ut} as
  \begin{equation}\label{uta}
  u_t=a_1(r,t)u_{rr}+a_{21}(r,t)u_r
 +\frac{a_{22}(r,t)}{r}\cdot u_r
-\chi\frac{u^q(\mu-u)}{{\sqrt{1+v^2_r}}^3}
-\chi\frac{n-1}{r}\cdot\frac{u^qv^3_r}
{{\sqrt{1+v^2_r}}^3} 
  \end{equation}
  for all $ r \in (0, R)$ and all $t \in (0, \tmax)$, where
 \begin{align*}
   a_1(r,t)&:=\frac{u^{p+2}}{{\sqrt{u^2+u^2_r}}^3},\\
   a_{21}(r,t)&:=p\frac{u^{p-1}u^3_r}{{\sqrt{u^2+u^2_r}}^3}
 +(p-1)\frac{u^{p+1}u_r}{{\sqrt{u^2+u^2_r}}^3}
-q\chi\frac{u^{q-1}v_r}{\sqrt{1+v^2_r}},\\
   a_{22}(r,t)&:=(n-1)\frac{u^p}{\sqrt{u^2+u^2_r}}
   \end{align*}
are continuous functions in $[0,R]\times(0,\tmax)$. 
By using the boundedness of $u$, we can establish that
\begin{align}\label{3esti1}
   -\chi\frac{u^q(\mu-u)}{{\sqrt{1+v^2_r}}^3}
   \geq-\chi\mu\frac{u^q}{\sqrt{1+v^2_r}^3}
   \geq-\chi\mu\cdot\|u\|^{q-1}
   _{L^{\infty}((0,R)\times(0,\tmax))}\cdot u
   \end{align}
    for all $ r \in (0, R)$ and all $t \in (0, \tmax)$.
   Moreover, we use the one-sided inequality $v_r\leq\frac{\mu r}{n}$
   provided by Lemma \ref{lem2.5} to obtain 
   \begin{align}\label{3esti2}
   -\chi\frac{n-1}{r}\cdot\frac{u^qv^3_r}
   {{\sqrt{1+v^2_r}}^3}
   &\geq-(n-1)\chi\cdot\frac{v^2_r}
   {{\sqrt{1+v^2_r}}^3}\cdot\frac{v_r}{r}\cdot u^q\\ \nonumber
   &\geq-(n-1)\chi\cdot 1\cdot\frac{\mu}{n}\cdot
   \|u\|^{q-1}_{L^{\infty}((0,R)\times(0,\tmax))}\cdot u\\ \nonumber
   &\geq-\chi\mu\cdot\|u\|^{q-1}
   _{L^{\infty}((0,R)\times(0,\tmax))}\cdot u
   \end{align}
    for all $ r \in (0, R)$ and all $t \in (0, \tmax)$.
    Thus plugging \eqref{3esti1} and \eqref{3esti2} into \eqref{uta} 
    implies 
     \begin{align*}
      u_t\geq a_1(r,t)u_{rr}+a_{21}(r,t)u_r
 +\frac{a_{22}(r,t)}{r }\cdot u_r
-\kappa u
     \end{align*}
  for all $ r \in (0, R)$ and all $t \in (0, \tmax)$ 
  with $\kappa$ as in \eqref{kappa}. 
Thanks to the contradiction arguments similar to those in the proof of  \cite[Lemma 3.2]{B-W}, we arrive at the conclusion. 
\end{proof}
%
\section{A pointwise lower estimate for $u_r$}\label{Sec4}
%
%
%
In this section we will establish a key estimate. 
We confirm the following lemma that not only implies a 
lower bound for $u_r$ but also will play an important role 
to obtain an upper estimate for $u_r$. 
\begin{lem}\label{lem4.1}
  Assume that $\tmax<\infty$, 
  but that $\sup_{(r,t)\in(0,R)\times(0, \tmax)}u(r,t)<\infty.$ 
  Then there exists a constant $C>0$ such that
    \begin{align*}
    u_r(r,t)\geq-C
    \end{align*}    
    for all $ r \in (0, R)$ and all $t \in (0, \tmax)$.
 \end{lem}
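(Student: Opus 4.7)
The plan is to rule out gradient blow-up from below by a comparison argument against the parabolic operator $\mathcal{P}$ defined in \eqref{pphi}, using the fact that $\mathcal{P}u_r=0$ from Lemma \ref{urt.lem}. First I would collect all the ``background'' bounds: by hypothesis $u$ is bounded above on $(0,R)\times(0,\tmax)$, while Lemma \ref{lem;rulingout-extinction} gives a positive lower bound $u\ge\delta:=(\inf_{s\in(0,R)}u_0(s))e^{-\kappa\tmax}>0$, and Lemma \ref{lem2.5} provides pointwise bounds on $v_r$ and $v_{rr}$ in terms of $\|u(\cdot,t)\|_{L^\infty}$. Using these together with the explicit formulas for $A_1,A_2,a_3,A_3,A_4$, every coefficient appearing in \eqref{pphi} can be estimated by a continuous function of $u_r$ alone with constants depending only on $\delta$, $\sup u$, $\chi$, $\mu$, $R$, $n$, $p$, $q$. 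In particular, on the negative half-line one has inequalities of the form $|a_3|\le c_1$, $A_4\ge -c_2$, $A_3\,\varphi \ge -c_3\,|\varphi|(1+\varphi^2)$ for $\varphi\le 0$, etc.

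Next I would build the comparison function $\underline{\varphi}(r,t)$ designed to lie below $u_r$. The natural ansatz, following the strategy advertised in the introduction, is to let $\underline{\varphi}$ be spatially constant on a large subinterval and to solve a Riccati-type ODE $y'(t)=c_1 y^2(t)-c_2$ there; this has a tangent-type solution $y(t)=-\alpha\tan(\beta(T-t)+\gamma)$ which is negative, decreasing, and stays finite on $[0,\tmax)$ provided $T,\gamma$ are chosen so that the asymptote occurs strictly after $\tmax$ (this is possible precisely because $\tmax<\infty$). At the endpoints $r=0$ and $r=R$ we know $u_r=0$, so $\underline{\varphi}$ must be modified there: I would glue the tangent piece to smooth transition pieces (for instance of the form $\theta(r)\,y(t)$ with $\theta\equiv 1$ on a central interval and $\theta=0$ near $\{0,R\}$, or in piecewise form as in \cite{B-W} adapted to the tangent profile) in such a way that the resulting $\underline{\varphi}$ is $C^{2,1}$, non-positive, equals $y(t)$ on the bulk of $(0,R)$, and satisfies $\underline{\varphi}(0,t)=\underline{\varphi}(R,t)=0\le u_r$ together with $\underline{\varphi}(r,0)\le(u_0)_r(r)$ after possibly shrinking the initial value of $y$ so that $-\|(u_0)_r\|_{L^\infty}\ge y(0)$.

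With this setup, the plan is to verify $(\mathcal{P}\underline{\varphi})(r,t)\le 0$ pointwise. On the bulk interval the derivatives in $r$ vanish, so
\begin{equation*}
\mathcal{P}\underline{\varphi}=y'(t)-a_3(r,t)y^2-A_3(r,t)y-A_4(r,t),
\end{equation*}
and the previous coefficient estimates together with the chosen Riccati equation $y'=c_1 y^2-c_2$ force this to be non-positive once $c_1,c_2$ are large enough. On the gluing zone near $r=0$ and $r=R$ the derivatives $\underline{\varphi}_r,\underline{\varphi}_{rr}$ enter, but they can be absorbed into $c_2$ by choosing the transitions sufficiently steep, using that $A_1>0$ and that $A_2$ and the other coefficients are controlled on the glue region (where $u$ is bounded). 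Once $\mathcal{P}\underline{\varphi}\le 0=\mathcal{P}u_r$ is established, the comparison argument is standard in spirit but non-trivial because of the quadratic term $a_3\varphi^2$: assuming $(u_r-\underline{\varphi})$ becomes zero for the first time at some $(r_0,t_0)$ with $r_0\in(0,R)$, the first-order conditions $(u_r-\underline{\varphi})_t\le 0$, $(u_r-\underline{\varphi})_r=0$, $(u_r-\underline{\varphi})_{rr}\ge 0$ together with $u_r(r_0,t_0)=\underline{\varphi}(r_0,t_0)$ linearize the quadratic term and yield the contradiction $0\ge\mathcal{P}u_r-\mathcal{P}\underline{\varphi}>0$; this is the step where the identity $u_r^2-\underline{\varphi}^2=(u_r+\underline{\varphi})(u_r-\underline{\varphi})$ is used to treat $a_3\varphi^2$ as an effectively linear first-order perturbation at the touching point.

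The main obstacle I foresee is the construction of the comparison function itself, not the comparison inequality. Because the Riccati ODE produces a tangent with a vertical asymptote, one must choose the parameters so the asymptote stays to the right of $\tmax$; this is where the assumption $\tmax<\infty$ enters decisively and where the analysis is more delicate than in the $p=q=1$ case of \cite{B-W}, since the coefficient $a_3$ is no longer absent. A secondary difficulty is arranging the space--dependent gluing so that the transition terms $\underline{\varphi}_r,\underline{\varphi}_{rr}$ do not spoil the sign of $\mathcal{P}\underline{\varphi}$ near $r=0$ and $r=R$, and in particular ensuring compatibility with the singular factor $\frac{n-1}{r^2}$ hidden in $A_3$ and $A_4$; this typically requires that $\underline{\varphi}$ vanish to sufficiently high order at $r=0$.
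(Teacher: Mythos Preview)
Your overall plan is on the right track---comparison via $\mathcal{P}$, a tangent-type Riccati subsolution, and the handling of the quadratic term $a_3\varphi^2$ at a first touching point are all exactly what the paper does. But you are making the construction much harder than it needs to be, and the complication you introduce is precisely the ``secondary difficulty'' you worry about at the end.

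The point you are missing is that for a \emph{lower} barrier you only need $\underline{\varphi}(0,t)\le u_r(0,t)=0$ and $\underline{\varphi}(R,t)\le u_r(R,t)=0$, not equality. Hence any \emph{spatially constant, strictly negative} function already satisfies the boundary requirement. The paper simply takes $\underline{\varphi}(r,t)=y(t)$ with $y(t)<0$ solving a Riccati ODE; then $\underline{\varphi}_r=\underline{\varphi}_{rr}\equiv 0$ and no gluing whatsoever is needed. This also dissolves your concern about the singular factor $\tfrac{n-1}{r^2}$ in $A_3$: in $(\mathcal{P}\underline{\varphi})$ that term enters as $-\tfrac{n-1}{r^2}\tfrac{u^p}{\sqrt{u^2+u_r^2}}\,|\underline{\varphi}|\le 0$, i.e.\ with a favorable sign, and the apparently singular pieces of $A_4$ are likewise either nonpositive or rendered bounded by $|v_r|\le c\,r$. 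So the estimate $(\mathcal{P}\underline{\varphi})\le 0$ reduces to pure coefficient bookkeeping with $\underline{\varphi}<0$.

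Two smaller remarks. First, the Riccati ODE should have a \emph{negative} coefficient on the quadratic: the paper uses $y'=-\widetilde{c_4}\,y^2+\widetilde{c_5}\,y-\widetilde{c_6}$ (equivalently $y'=-\widetilde{c_4}y^2-\widetilde{c_5}|y|-\widetilde{c_6}$ for $y<0$), which is what produces a decreasing tangent branch with an asymptote placed just beyond $\tmax$; your ``$y'=c_1y^2-c_2$'' does not match the tangent solution you wrote down. Second, the paper does not run the comparison on all of $(0,\tmax)$ but on a short window $(\tmax-\varepsilon,\tmax)$ chosen so that a single tangent branch covers it; the earlier interval $[0,\tmax-\varepsilon]$ is compact and $u_r$ is trivially bounded there.
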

\begin{proof}
From the assumption of this lemma  
 we can find a constant $c_1>0$ such that
\begin{align}\label{4u}
 u(r,t)\leq c_1\quad
  \mbox{for all}\ r \in (0,R)\ \mbox{and all}\ t \in (0, \tmax),
\end{align} 
which implies that Lemma \ref{lem2.5} provides constants 
$c_2>0$ and $c_3>0$ such that
 \begin{align}\label{4vrvrr}
|v_r(r,t)|\leq c_2r\quad \mbox{and} \quad|v_{rr}(r,t)|\leq c_3
 \end{align}
for all $ r \in (0, R)$ and all $t \in (0, \tmax)$. We now take $\widetilde{c_4}>0$ and $\widetilde{c_5}>0$ fulfilling that
\begin{align*}
\widetilde{c_4}>c_4&
:=p(p-1)c^{p-2}_1+q(q-1)\chi c^{q-2}_1,\\
\widetilde{c_5}>c_5&
:=3p(p+1)c^{p-2}_1+q(c_1+2c_3+\mu)\chi c^{q-1}_1
\end{align*}
and 
 \begin{align}\label{4c4c6c5}
4\widetilde{c_4}c_6-{\widetilde{c_5}}^2<0,
\end{align}
where
\begin{align*}
c_6:=3\left(\mu c_3+c_1c_3
+\frac{(n-1)c^2_2}{3}+(n-1)c_2c_3\right)\chi c^q_1c_2R.
\end{align*}
Then there exist $n\in\mathbb{N}$ and $j\in\{0,1,2,3,4,5\}$
 such that
\begin{align*}
  2\cdot\frac{(6n-6+j)\pi}{6\cdot\frac{2}{3}}
  <\tmax<2\cdot\frac{(6n-5+j)\pi}{6\cdot\frac{2}{3}}.
\end{align*}
Therefore we can find $\ep>0$ such that
\begin{align*}
 2\cdot\frac{(6n-6+j)\pi}{6\cdot\frac{2}{3}}
 <\tmax-\ep<\tmax
 <2\cdot\frac{(6n-5+j)\pi}{6\cdot\frac{2}{3}},
\end{align*}
and then there exists $\alpha_0>0$ such that  
\begin{align}\label{talpha}
 2\cdot\frac{(6n-6+j)\pi}{6\cdot(\frac{2}{3}+\alpha)}
 <\tmax-\ep<\tmax
 <2\cdot\frac{(6n-5+j)\pi}{6\cdot(\frac{2}{3}+\alpha)}
\end{align}
for all $\alpha\in(0,\alpha_0)$. Now we take $E\geq1$ fulfilling
\begin{align*}
 u_r(r,\tmax-\ep)\geq-E 
\end{align*} 
    for all $ r \in (0, R)$.
Then, since $x\tan{\frac{-\pi}{3x}}\to-\infty$ 
as $x\searrow\frac{2}{3}$, 
 we can find $\alpha_1\in(0,\alpha_0)$ such that 
\begin{align*}
 -E>\frac{1}{2\widetilde{c_4}}\left(\frac{2}{3}+\alpha_1\right)
  \tan{\frac{-\pi}{3\cdot\left(\frac{2}{3}+\alpha_1\right)}}
 +\frac{\widetilde{c_5}}{2\widetilde{c_4}}.
\end{align*}
By virtue of \eqref{4c4c6c5} and the fact that 
$4\widetilde{c_4}x-{\widetilde{c_5}}^2\to\infty$ as $x\to\infty$,  
we obtain from the intermediate value theorem that
there is a constant $\widetilde{c_6}>c_6$ such that
\begin{align}\label{alpha1}
 \sqrt{4\widetilde{c_4}\widetilde{c_6}
 -{\widetilde{c_5}}^2}=\frac{2}{3}+\alpha_1.
\end{align}
Combination of \eqref{talpha} and \eqref{alpha1} with 
$\alpha=\alpha_1$ implies that 
\begin{align*}
 \frac{(6n-6+j)\pi}{6\cdot\sqrt{\frac{4\widetilde{c_4}\widetilde{c_6}
 -{\widetilde{c_5}}^2}{4}}}
 <\tmax-\ep<\tmax<
 \frac{(6n-5+j)\pi}{6\cdot\sqrt{\frac{4\widetilde{c_4}\widetilde{c_6}
 -{\widetilde{c_5}}^2}{4}}}.
\end{align*}
Now we define a comparison function $\underline{\varphi}$    
   (see Figure 1) by letting
  \begin{equation*}
  \underline{\varphi}(r,t):=    
     \begin{cases}
        D+\frac{\widetilde{c_5}}{2\widetilde{c_4}},
        &t=0,
\\[4mm] 
         \sqrt{\widetilde{C}}
          \tan\left[\tan^{-1}\frac{D}{\sqrt{\widetilde{C}}}
          -\widetilde{c_4}\sqrt{\widetilde{C}}\left(t-\frac{j\pi}
          {6\widetilde{c_4}\sqrt{\widetilde{C}}}\right)\right]
          +\frac{\widetilde{c_5}}{2\widetilde{c_4}},
          &t\in\left(\frac{(6n-6+j)\pi}
          {6\widetilde{c_4}\sqrt{\widetilde{C}}},
          \frac{(6n-5+j)\pi}{6\widetilde{c_4}
          \sqrt{\widetilde{C}}}\right]
     \end{cases}
 \end{equation*}
 for $r\in[0,R]$, $n\in\mathbb{N}$ and $j\in\{0,1,2,3,4,5\}$, where
 \begin{align*}
 \widetilde{C}
 :=\frac{4\widetilde{c_4}\widetilde{c_6}-{\widetilde{c_5}}^2}
 {4{\widetilde{c_4}}^2}\quad\mbox{and}\quad 
 D:= \frac{1}{2\widetilde{c_4}}\left(\frac{2}{3}+\alpha_1\right)
        \tan{\frac{-\pi}
        {3\cdot\left(\frac{2}{3}+\alpha_1\right)}}.
 \end{align*}
 \begin{figure}[h]
\centering
\includegraphics[width=0.47\textwidth]{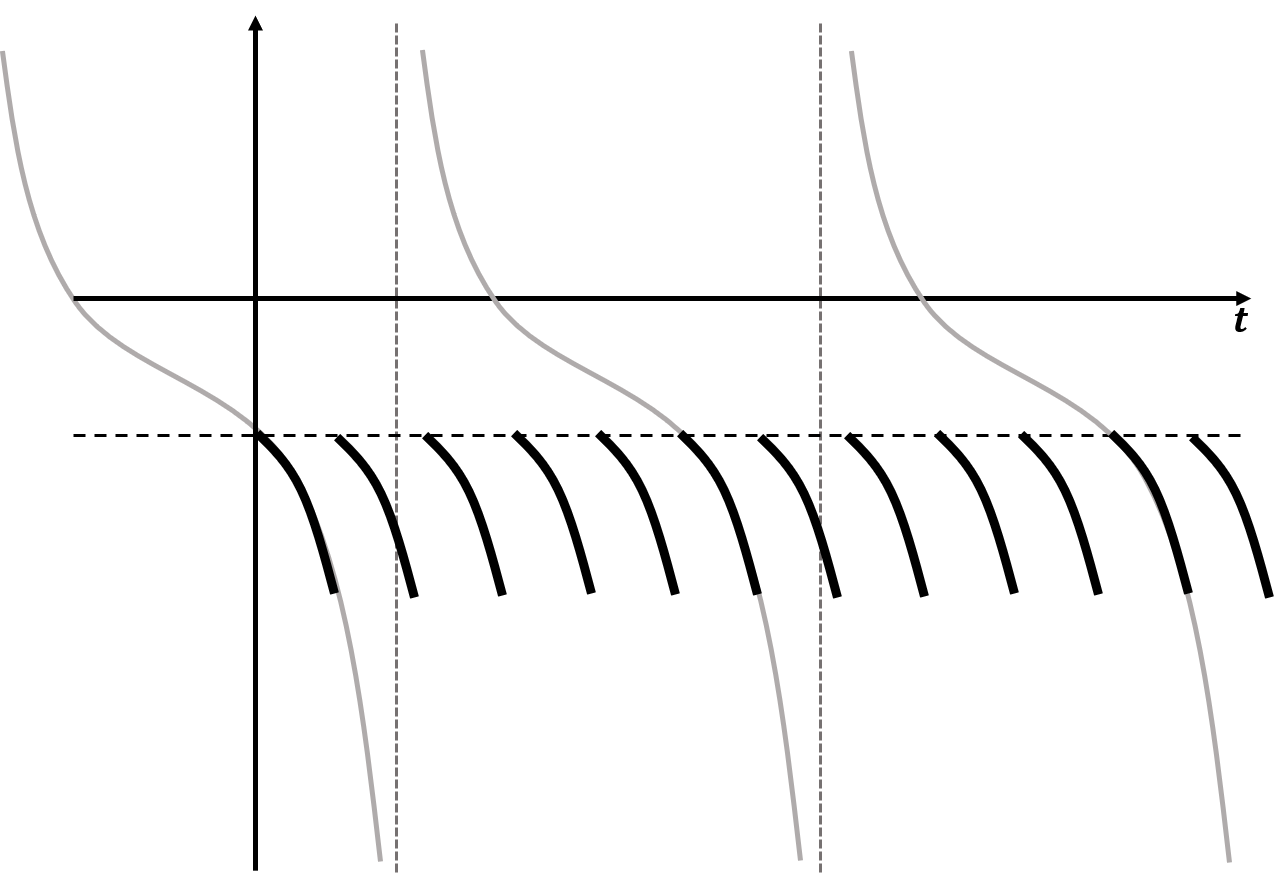}
\caption{Graph of the function $\underline{\varphi}$}
\end{figure}%

\noindent
Our goal is to show that $u_r\geq \underline{\varphi}$. 
Toward this goal, in view of the comparison principle, it is enough to verify that $\mathcal{P}\underline{\varphi}\leq 0$. 
Since 
$\underline{\varphi}$ is a monotonically decreasing function 
with respect to $t
\in \left(\frac{(6n-6+j)\pi}{6\widetilde{c_4}\sqrt{\widetilde{C}}},
        \frac{(6n-5+j)\pi}{6\widetilde{c_4}\sqrt{\widetilde{C}}}\right]$ 
     for all $n\in\mathbb{N}$ and all $j\in\{0,1,2,3,4,5\}$,
it follows that 
\[ 
 \underline{\varphi}(r,t)\leq \underline{\varphi}(r,0)<-E
\] 
for all $ r \in (0, R)$ and all $t \in (0, \tmax)$, 
and hence 
\[\underline{\varphi}<0.
\] 
Noting that 
\[ \underline{\varphi}_r= \underline{\varphi}_{rr}=0 
\quad \mbox{and} \quad  
 -\underline{\varphi}=|\underline{\varphi}|
\] 
because $\underline{\varphi}$ is independent of
$r$ and $\underline{\varphi}<0$, 
     we obtain from \eqref{pphi} that
  \begin{align*}
   (\mathcal{P}\underline{\varphi})(r,t)
   &={\underline{\varphi}}_t-a_3(r,t){\underline{\varphi}}^2
   +A_3(r,t)|\underline{\varphi}|-A_4(r,t)\\
   &={\underline{\varphi}}_t
   -\left(p(p-1)\frac{u^{p-2}u^5_r}{{\sqrt{u^2+u^2_r}}^5}
-q(q-1)\chi\frac{u^{q-2}v_r}
{\sqrt{1+v^2_r}}\right){\underline{\varphi}}^2\\
&\quad\,
   +p(p-4)\frac{u^pu^4_r}
   {{\sqrt{u^2+u^2_r}}^5}|\underline{\varphi}|
  -\frac{n-1}{r^2}\cdot\frac{u^p}
  {\sqrt{u^2+u^2_r}}|\underline{\varphi}|\\
&\quad\,
+(p-1)(p-2)\frac{u^{p+2}u_r}
{{\sqrt{u^2+u^2_r}}^5}|\underline{\varphi}|
+(p-1)(p+1)\frac{u^pu^3_r}
{{\sqrt{u^2+u^2_r}}^5}|\underline{\varphi}|\\
&\quad\,
-q\chi\mu\frac{u^{q-1}}
{{\sqrt{1+v^2_r}}^3}|\underline{\varphi}|
+(q+1)\chi\frac{u^q}
{{\sqrt{1+v^2_r}}^3}|\underline{\varphi}|
-q\chi\frac{u^{q-1}v_{rr}}
{{\sqrt{1+v^2_r}}^3}|\underline{\varphi}|\\
&\quad\,
+q\chi\frac{u^{q-1}v^2_rv_{rr}}
{{\sqrt{1+v^2_r}}^3}|\underline{\varphi}|
-q\chi\frac{n-1}{r}\cdot\frac{u^{q-1}v^3_r}
{{\sqrt{1+v^2_r}}^3}|\underline{\varphi}|\\
&\quad\,
-p\frac{n-1}{r}\cdot\frac{u^{p-1}u^4_r}
{{\sqrt{u^2+u^2_r}}^3}
-(p-1)\frac{n-1}{r}\cdot\frac{u^{p+1}u^2_r}
{{\sqrt{u^2+u^2_r}}^3}
-3\chi\mu\frac{u^qv_rv_{rr}}{{\sqrt{1+v^2_r}}^5}
\\
&\quad\,
+3\chi\frac{u^{q+1}v_rv_{rr}}{{\sqrt{1+v^2_r}}^5}
-\chi\frac{n-1}{r^2}\cdot\frac{u^qv^3_r}
{{\sqrt{1+v^2_r}}^3}
+3\chi\frac{n-1}{r}\cdot\frac{u^qv^2_rv_{rr}}
{{\sqrt{1+v^2_r}}^5}
  \end{align*}
  for all $ r \in (0, R)$ and all $t \in (0, \tmax)$.
 Then, since $\underline{\varphi}$ satisfies
\begin{align*}
{\underline{\varphi}}_t
   =-\widetilde{c_4}{\underline{\varphi}}^2
   +\widetilde{c_5}\underline{\varphi}-\widetilde{c_6}
\end{align*}   
   and the fourth, seventh, twelfth, thirteenth terms
  on the right-hand side are nonpositive:
  \begin{align*}
 & -\frac{n-1}{r^2}\cdot\frac{u^p}
  {\sqrt{u^2+u^2_r}}|\underline{\varphi}|\leq 0,\qquad  
  -q\chi\mu\frac{u^{q-1}}
{{\sqrt{1+v^2_r}}^3}|\underline{\varphi}|\leq 0,\\
&-p\frac{n-1}{r}\cdot\frac{u^{p-1}u^4_r}
{{\sqrt{u^2+u^2_r}}^3}\leq 0, \qquad
-(p-1)\frac{n-1}{r}\cdot\frac{u^{p+1}u^2_r}
{{\sqrt{u^2+u^2_r}}^3}\leq 0, 
  \end{align*}
  we can obtain from \eqref{4u}, \eqref{4vrvrr} and 
  the inequality e.g. 
  $\frac{u^5_r}{\sqrt{u^2+u^2_r}^5}\leq 1$
  that
  \begin{align*}
   (\mathcal{P}\underline{\varphi})(r,t)
   &\leq{\underline{\varphi}}_t
   +\left(p(p-1)c^{p-2}_1
   +q(q-1)\chi c^{q-2}_1\right){\underline{\varphi}}^2\\
&\quad\,
   +\left(3p(p+1)c^{p-2}_1+q(c_1+2c_3+\mu)
   \chi c^{q-1}_1\right)|\underline{\varphi}|\\
&\quad\,
   +3\left(\mu c_3+c_1c_3+\frac{(n-1)c^2_2}{3}
   +(n-1)c_2c_3\right)\chi c^q_1c_2R\\[2mm]
   &=-\widetilde{c_4}{\underline{\varphi}}^2
   +\widetilde{c_5}\underline{\varphi}-\widetilde{c_6}
   +c_4{\underline{\varphi}}^2
   +c_5|\underline{\varphi}|+c_6\\[2mm]
   &=(c_4-\widetilde{c_4}){\underline{\varphi}}^2
   +(c_5-\widetilde{c_5})|\underline{\varphi}|
   +(c_6-\widetilde{c_6})
   \end{align*}
   for all $ r \in (0, R)$ and all $t \in (\tmax-\ep, \tmax)$.
   Then the relations that 
   $\widetilde{c_4}>c_4$, $\widetilde{c_5}>c_5$ 
   and $\widetilde{c_6}>c_6$ 
   ensure that 
   \[(\mathcal{P}\underline{\varphi})(r,t)\leq 0\]
   for all $r\in(0,R)$ and all $t\in(\tmax-\ep, \tmax)$.
   Since \[(\mathcal{P}u_r)(r,t)=0\]
    for all $(r,t)\in(0,R)\times(\tmax-\ep, \tmax)$,
   and moreover
   \begin{align*}
   \underline{\varphi}(r,\tmax-\ep)
      &<\underline{\varphi}(r,0)\\
      &=\frac{1}{2\widetilde{c_4}}
      \left(\frac{2}{3}+\alpha_1\right)
      \tan{\frac{-\pi}{3\cdot
      \left(\frac{2}{3}+\alpha_1\right)}}
      +\frac{\widetilde{c_5}}{2\widetilde{c_4}}\\
      &<-E\\
      &\leq u_r(r,\tmax-\ep)
\end{align*}
for all $r\in [0, R]$
  and
 \begin{align*}
 &\underline{\varphi}(0,t)\leq u_r(0,t)=0,
  \\
   &\underline{\varphi}(R,t)\leq u_r(R,t)=0  
\end{align*}  
for all $t\in(\tmax-\ep, \tmax)$, 
the comparison principle derives that
 $u_r(r,t)\geq \underline{\varphi}(r,t)$
 for all $ r \in (0, R)$ and all $t \in (\tmax-\ep, \tmax)$,
 and hence
  \begin{align*}
  u_r(r,t)&\geq \underline{\varphi}(r,\tmax)\\ 
  &=-| \underline{\varphi}(r,\tmax)|
\end{align*} 
for all $t\in(\tmax-\ep, \tmax)$.
Therefore by putting
\begin{align*}
  C:=  \max\left\{|\underline{\varphi}(r,\tmax)|,\, 
  \max_{(r,t)\in(0,R)\times(0,\tmax-\ep)}{|u_r(r,t)|}\right\}
\end{align*}
we have this lemma.
\end{proof}
%
\section{A bound for $|u_r|$.\ Proof of Theorem 1.1}\label{Sec5}
\subsection{ A bound for $|u_r|$ in terms of\ $z_+$}
Thanks to Lemma \ref{lem4.1}, in order to rule out 
 the possibility of gradient blow-up, it is enough to see that $u_r\leq C$ with some $C>0$. 
Here, in view of arguments as in \cite[Section 5]{B-W}, we will 
 establish a bound for $u_r$ in terms of $z_+$. First we rewrite 
 \eqref{ut} in Lemma \ref{ut.lem} by multiplying $\frac{1}{u}$ on the 
 both sides and find a key quantity such that
\begin{align}\label{z}
 z:=\frac{u_t}{u}&=\frac{u^{p+1}u_{rr}}{{\sqrt{u^2+u^2_r}}^3}
 +p\frac{u^{p-2}u^4_r}{{\sqrt{u^2+u^2_r}}^3}
 +\frac{n-1}{r}\cdot\frac{u^{p-1}u_r}{\sqrt{u^2+u^2_r}}
 +(p-1)\frac{u^pu^2_r}{{\sqrt{u^2+u^2_r}}^3}
 \\
\nonumber &\quad\,
-q\chi\frac{u^{q-2}u_rv_r}{\sqrt{1+v^2_r}}
-\chi\frac{u^{q-1}(\mu-u)}{{\sqrt{1+v^2_r}}^3}
-\chi\frac{n-1}{r}\cdot\frac{u^{q-1}v^3_r}
{{\sqrt{1+v^2_r}}^3}. 
 \end{align}
  This plays an important role when we establish an estimate 
  for $u_r$ which derives the desired extensibility criterion.
%
%
%
%
\begin{lem}\label{lem;ur;origin}
  Assume that $\tmax<\infty$,\,
  but that $\sup_{(r,t)\in(0,R)\times(0, \tmax)}u(r,t)<\infty.$\,
  Then there exist $R_0\in(0,R)$ and a constant $C>0$ such that
    \begin{align*}
    {\|u_r(\cdot,t)\|}_{L^{\infty}(0,R_0)}\leq C
    \left(1+{\|z_+(\cdot,t)\|}_{L^{\infty}(0,R_0)}\right)
    \end{align*}
     for all $t \in (0, \tmax)$.
 \end{lem}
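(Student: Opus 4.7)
The plan is to evaluate the pointwise identity \eqref{z} for $z=u_t/u$ at the radius $r^{\ast}$ where $u_r(\cdot,t)$ attains its maximum on $[0,R_0]$, and to combine the resulting bound with the pointwise lower bound for $u_r$ furnished by Lemma \ref{lem4.1}. The crucial observation is that at any maximizer $r^{\ast}\in(0,R_0]$ one has $u_{rr}(r^{\ast},t)\geq 0$, so that the term $u^{p+1}u_{rr}/\sqrt{u^2+u_r^2}^{\,3}$ in \eqref{z} contributes nonnegatively. Provided $R_0$ is chosen so small that the chemotactic drift $-q\chi u^{q-2}u_r v_r/\sqrt{1+v_r^2}$ is strictly dominated by the degenerate diffusion term $p u^{p-2}u_r^4/\sqrt{u^2+u_r^2}^{\,3}$ at $(r^{\ast},t)$, this will force a lower bound of the form $z(r^{\ast},t)\geq\alpha M(t)-C$ with $\alpha>0$, where $M(t):=u_r(r^{\ast},t)$, and hence $M(t)\leq\alpha^{-1}\bigl(C+\|z_+(\cdot,t)\|_{L^{\infty}(0,R_0)}\bigr)$.

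\medskip

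To implement this I would first fix uniform bounds: the standing hypothesis $\sup u<\infty$ together with Lemma \ref{lem;rulingout-extinction} provides constants $0<c_0\leq c_1$, independent of $t\in(0,\tmax)$, such that $c_0\leq u\leq c_1$ on $(0,R)\times(0,\tmax)$, while Lemma \ref{lem2.5} yields $|v_r(r,t)|\leq \mu r/n$ and $|v_{rr}|\leq c_1$. Since $u^{p-2}$ and $u^{q-2}$ are then bounded above and below by positive constants depending only on $c_0,c_1,p,q$, one may choose $R_0\in(0,R)$ so small that
\[
q\chi\cdot\sup_{s\in[c_0,c_1]}s^{q-2}\cdot\frac{\mu R_0}{n}\;\leq\;\frac{p}{4\sqrt{2}}\cdot\inf_{s\in[c_0,c_1]}s^{p-2}.
\]
Fix $t\in(0,\tmax)$ and set $M(t):=\max_{r\in[0,R_0]}u_r(r,t)$. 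If $M(t)\leq c_1$ the conclusion is immediate; otherwise $M(t)>c_1\geq u$ on $[0,R_0]$, and since $u_r(0,t)=0<M(t)$ the maximum is attained at some $r^{\ast}\in(0,R_0]$. At $r^{\ast}$ we have $u_{rr}(r^{\ast},t)\geq 0$: by interior criticality if $r^{\ast}<R_0$, and by one-sided maximality together with the $C^2$-regularity of $u$ in an open neighborhood of $R_0\in(0,R)$ when $r^{\ast}=R_0$.

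\medskip

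Substituting $u_{rr}(r^{\ast},t)\geq 0$ into \eqref{z} and discarding the four resulting nonnegative contributions (the $u_{rr}$-term itself, as well as $(n-1)u^{p-1}M/\bigl(r^{\ast}\sqrt{u^2+M^2}\bigr)$ and $(p-1)u^pM^2/\sqrt{u^2+M^2}^{\,3}$, both nonnegative since $p\geq 1$) leaves the estimate
\[
z(r^{\ast},t)\;\geq\;p\,\frac{u^{p-2}M^4}{\sqrt{u^2+M^2}^{\,3}}-q\chi\,\frac{u^{q-2}M|v_r|}{\sqrt{1+v_r^2}}-\chi\,\frac{u^{q-1}(\mu+u)}{\sqrt{1+v_r^2}^{\,3}}-\chi(n-1)\,\frac{u^{q-1}|v_r|^3}{r^{\ast}\sqrt{1+v_r^2}^{\,3}}.
\]
From $M\geq u$ one deduces $M^4/\sqrt{u^2+M^2}^{\,3}\geq M/(2\sqrt{2})$, from Lemma \ref{lem2.5} one has $|v_r|\leq\mu R_0/n$ and $|v_r|^3/r^{\ast}\leq(\mu/n)^3 R_0^2$, so the last two terms are majorized by a universal constant $C_1$. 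Combining with the choice of $R_0$ above, these estimates produce
\[
z(r^{\ast},t)\;\geq\;\frac{p}{4\sqrt{2}}\cdot\inf_{s\in[c_0,c_1]}s^{p-2}\cdot M(t)-C_1,
\]
whence $M(t)\leq C\bigl(1+z_+(r^{\ast},t)\bigr)\leq C\bigl(1+\|z_+(\cdot,t)\|_{L^{\infty}(0,R_0)}\bigr)$; coupled with Lemma \ref{lem4.1} this yields the lemma. The main obstacle is to secure strict positivity of the coefficient of $M(t)$ in the last display: because of the quasilinear prefactors $pu^{p-2}$ and $qu^{q-2}$ that arise when $p,q\neq 1$, one is forced to invoke both the uniform two-sided bound $c_0\leq u\leq c_1$ (and hence the finiteness of $\tmax$) and the smallness of $|v_r|$ near the origin, and this is precisely where the argument must depart from \cite{B-W}.
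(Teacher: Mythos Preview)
Your argument is correct and takes a genuinely different route from the paper's. The paper rewrites \eqref{z} to isolate $p\,u_r^4/u^3$, multiplies by $r^{n-1}u_r^m$ for an arbitrary even integer $m$, integrates over $(0,R_0)$, and estimates the resulting six integrals $J_1,\dots,J_6$ term by term (the choice of $R_0$ enters through $J_5$); the desired $L^\infty$-bound is then recovered by letting $m\to\infty$. Your approach bypasses this integral machinery entirely: you evaluate \eqref{z} pointwise at the maximizer $r^\ast$ of $u_r(\cdot,t)$ on $[0,R_0]$, exploit $u_{rr}(r^\ast,t)\geq 0$ to discard the $u_{rr}$-term, and balance the surviving diffusion contribution $p\,u^{p-2}M^4/\sqrt{u^2+M^2}^{\,3}\geq p\,(\inf u^{p-2})\,M/(2\sqrt 2)$ against the drift term $q\chi u^{q-2}M|v_r|$ by taking $R_0$ small. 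This is shorter and more elementary; the integral argument in the paper, by contrast, is closer in spirit to the method of \cite{B-W} and makes the dependence of constants on $m$ explicit in a way that parallels later Moser-type estimates. Two cosmetic points: you drop three nonnegative terms, not four; and the bound $|v_r|\leq\mu r/n$ is only one-sided in Lemma \ref{lem2.5}, so to control the fifth and seventh terms you should use the two-sided estimate $|v_r|\leq c_1 r/n$ from the same lemma instead (this only changes the numerical constant in your choice of $R_0$).
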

\begin{proof}
The proof is based on an argument in the proof of
 \cite[Lemma 5.1]{B-W}. 
We rewrite \eqref{z} to have
\begin{align}\label{pur4u3}
  p\frac{u^4_r}{u^3}&=\frac{{\sqrt{u^2+u^2_r}}^3}{u^{p+1}}z
 -u_{rr}-\frac{n-1}{r}u_r\cdot\frac{{\sqrt{u^2+u^2_r}}^2}{u^2}
 -(p-1)\frac{u^2_r}{u}
 \\
\nonumber &\quad\,
+q\chi\frac{{\sqrt{u^2+u^2_r}}^3\cdot u_rv_r}
{u^{p-q+3}\sqrt{1+v^2_r}}
+\chi\frac{{\sqrt{u^2+u^2_r}}^3(\mu-u)}
{u^{p-q+2}{\sqrt{1+v^2_r}}^3}
 \\
\nonumber &\quad\,
+\chi\frac{n-1}{r}\cdot
\frac{{\sqrt{u^2+u^2_r}}^3v^3_r}
{u^{p-q+2}{\sqrt{1+v^2_r}}^3},
 \end{align}
 and we have from 
 the identity
 $u_{rr}+\frac{n-1}{r}u_r
 =\frac{1}{r^{n-1}}(r^{n-1}u_r)_r$ that
\begin{align}\label{rapu}
-u_{rr}-\frac{n-1}{r}u_r\cdot\frac{{\sqrt{u^2+u^2_r}}^2}{u^2}
&=-\left(u_{rr}+\frac{n-1}{r}u_r\right)
-\frac{n-1}{r}\cdot\frac{u^3_r}{u^2}\\
\nonumber
&=-\frac{1}{r^{n-1}}(r^{n-1}u_r)_r 
-\frac{n-1}{r}\cdot\frac{u^3_r}{u^2}
\end{align}
  for all $ r \in (0, R)$ and all $t \in (0, \tmax)$. 
   Thanks to the assumption of the boundedness of $u$, 
  we can take constants $c_1\geq\mu$ and $c_2>0$ such that
  \begin{align*}
  u\leq c_1 
\quad \mbox{and} \quad 
  \sqrt{u^2+u^2_r}^3\leq c_2(1+{|u_r|}^3)
\end{align*}
 for all $ r \in (0, R)$ and all $t \in (0, \tmax)$.
 On the other hand, recalling Lemma \ref{lem;rulingout-extinction}, 
  we can find
a constant $c_3>0$ fulfilling
\begin{align}\label{u.ulb}
   u\geq c_3 \qquad
\end{align}
 for all $ r \in (0, R)$ and all $t \in (0, \tmax)$.
Because of the estimate $c_3\leq u \leq c_1$
 for all $ r \in (0, R)$ and all $t \in (0, \tmax)$, 
we can obtain constants $C(p,q)>0$
 and $\widetilde{C}(p,q)>0$ such that
\begin{align*}
u^{p-q+3}\geq C(p,q)
\quad \mbox{and}\quad 
u^{p-q+2}\geq \widetilde{C}(p,q)
\end{align*}
for all $ r \in (0, R)$ and all $t \in (0, \tmax)$.
In order to show that the conclusion of this lemma holds,
 we pick any $R_0\in (0,R)$ satisfying   
\begin{align*}
R_0\leq\frac{nC(p,q)}{4 c^3_1c_2q\chi\mu}.
\end{align*}
Here, let $m$ be an arbitrary even integer
and introduce 
\begin{align*}
I(t):=p\int_{0}^{R_0} r^{n-1}\frac{u^{m+4}_r}{u^3}\,dr. 
\end{align*}
By using the lower estimate \eqref{u.ulb}  for $u$ we first obtain
 \begin{align}\label{It.lest}
I(t)\geq \frac{p}{c^{3}_3}\int_{0}^{R_0} r^{n-1}u^{m+4}_r\,dr
\geq \frac{1}{c^{3}_3}\int_{0}^{R_0}
 r^{n-1}u^{m+4}_r\,dr  
\end{align}
for all $t\in (0,\tmax)$. On the other hand, 
we multiply the quantity $r^{n-1}u^m_r$ on the
both sides of \eqref{pur4u3}, integrate over $(0,R_0)$
and use \eqref{rapu} to establish that 
\begin{align}\label{It}
I(t)&=p\int_{0}^{R_0} r^{n-1}\frac{u^{m+4}_r}{u^3}\,dr \\
   \nonumber
    &=\int_{0}^{R_0}
     r^{n-1}\frac{{\sqrt{u^2+u^2_r}}^3}{u^{p+1}}u^m_rz\,dr
      -\int_{0}^{R_0} (r^{n-1}u_r)_r\cdot u^m_r\,dr
       \\
\nonumber &\quad\,
      -\int_{0}^{R_0} r^{n-2}\cdot\frac{u^{m+3}_r}{u^2}\,dr 
      -(p-1)\int_{0}^{R_0}
       r^{n-1}\cdot\frac{u^{m+2}_r}{u}\,dr
       \\
    \nonumber &\quad\,
      +\chi\int_{0}^{R_0}
       r^{n-1}\frac{(\mu-u){\sqrt{u^2+u^2_r}}^3u^m_r}
                                {u^{p-q+2}{\sqrt{1+v^2_r}}^3}\,dr
      \\
    \nonumber &\quad\,
      +q\chi\int_{0}^{R_0}
       r^{n-1}\frac{{\sqrt{u^2+u^2_r}}^3\cdot u^{m+1}_rv_r}
          {u^{p-q+3}\sqrt{1+v^2_r}}\,dr
  \\
    \nonumber &\quad\,
      +(n-1)\chi\int_{0}^{R_0}
       r^{n-2}\frac{{\sqrt{u^2+u^2_r}}^3u^m_rv^3_r}
          {u^{p-q+2}{\sqrt{1+v^2_r}}^3}\,dr.                              
 \end{align}
 Since the fact that $m+2$  is even means that  
 the fourth term of the right-hand side
  on \eqref{It} is nonpositive, combining \eqref{It.lest} with 
  \eqref{It} implies that
\begin{align}\label{It.est}
\frac{1}{c^{3}_3}\int_{0}^{R_0}
  r^{n-1}u^{m+4}_r\,dr 
  &\leq
\int_{0}^{R_0}
     r^{n-1}\frac{{\sqrt{u^2+u^2_r}}^3}{u^{p+1}}u^m_rz\,dr
      -\int_{0}^{R_0} (r^{n-1}u_r)_r\cdot u^m_r\,dr
\\
    \nonumber &\quad\,
      -\int_{0}^{R_0}
       r^{n-2}\cdot\frac{u^{m+3}_r}{u^2}\,dr 
\\
    \nonumber &\quad\,
      +\chi\int_{0}^{R_0}
       r^{n-1}\frac{(\mu-u){\sqrt{u^2+u^2_r}}^3u^m_r}
       {u^{p-q+2}{\sqrt{1+v^2_r}}^3}\,dr
 \\
    \nonumber &\quad\,
      +q\chi\int_{0}^{R_0}
       r^{n-1}\frac{{\sqrt{u^2+u^2_r}}^3\cdot u^{m+1}_rv_r}
       {u^{p-q+3}\sqrt{1+v^2_r}}\,dr
 \\
    \nonumber &\quad\,
      +(n-1)\chi\int_{0}^{R_0}
       r^{n-2}\frac{{\sqrt{u^2+u^2_r}}^3u^m_rv^3_r}
       {u^{p-q+2}{\sqrt{1+v^2_r}}^3}\,dr\\
                                     \nonumber
   &=:J_1(t)+J_2(t)+J_3(t)+J_4(t)+J_5(t)+J_6(t)
\end{align}
for all $t\in (0,\tmax)$. 
Then we shall show estimates for $J_i$ ($i\in\{1,2,3,4,5,6\}$) 
 from an argument similar to that in the proof of 
 \cite[Lemma 5.1]{B-W}. 
 Employing  
 the Young inequality and 
 the H$\ddot{\mbox{o}}$lder inequality,
 we have that 
for all $t\in (0,\tmax)$,
\begin{align}\label{J1.est}
J_1(t)\leq
 c_4\left[1+\left(\int_{0}^{R_0}
 r^{n-1}u^{m+4}_r\,dr\right)^{\frac{m+3}{m+4}}\right]
 \left(\int_{0}^{R_0} r^{n-1}z^{m+4}_+\,dr\right)^{\frac{1}{m+4}}
\end{align}      
 with 
 $c_4:=\max\{\frac{c_2}{c_3^{p+1}}\cdot\frac{ R^n_1}{n},
 \frac{2c_2}{c_3^{p+1}}\}\,$ and $R_1:=\max\{1,R\}$, and 
 that 
\begin{align}\label{J23.est}
J_2(t)\leq R^{n-1}\cdot L^{m+1},\quad
J_3(t)\leq \frac{R^{n-1}}{C^2_3}\cdot L^{m+3}
\\ \label{J4.est}
J_4(t)\leq
      c_5\left[1+\left(\int_{0}^{R_0}
       r^{n-1}u^{m+4}_r\,dr\right)^{\frac{m+3}{m+4}}\right]
\end{align}
with $c_5
:=\max\{\frac{c_2\chi\mu}{n\widetilde{C}(p,q)}\cdot\frac{ R^n_1}{n},\,
\frac{2c_2\chi\mu}{n\widetilde{C}(p,q)}\cdot\frac{ R^n_1}{n}\}$, as well as that 
\begin{align}\label{J5.est}
J_5(t) 
\leq\left(c_6+\frac{1}{2c^{3}_3}\int_{0}^{R_0}
 r^{n-1}u^{m+4}_r\,dr\right)
 +\left(c_7+\frac{c_8}{m+4}\int_{0}^{R_0}
  r^{n-1}u^{m+4}_r\,dr\right)
            \end{align}
 with      
 $c_6:=\frac{q\chi\mu c_2R_0^{n+1}}{n(n+1)C(p,q)}$, 
 $c_7:=\frac{2q\chi\mu c_1c_2R_0^{n+1}}{n(n+1)C(p,q)}$  
 and $c_8:=\frac{3q\chi\mu c_1c_2R}{nC(p,q)}$ and that 
\begin{align}\label{J6.est}
J_6(t)\leq
      c_9\left[1+\left(\int_{0}^{R_0}
       r^{n-1}u^{m+4}_r\,dr\right)^{\frac{m+3}{m+4}}\right]
\end{align}
with $c_9
:=\max\{\frac{2c_2(n-1)\chi\mu R^n_0}{3\sqrt{3}n^2\widetilde{C}(p,q)}, 
 \frac{4c_2(n-1)\chi\mu}{3\sqrt{3}\widetilde{C}(p,q)}
 \cdot\frac{ R^n_1}{n}\}$.
In summary, \eqref{J1.est}--\eqref{J6.est} combined with \eqref{It.est} show that 
\begin{align*}
\frac{1}{c^{3}_3}\int_{0}^{R_0} r^{n-1}u^{m+4}_r\,dr 
  &\leq c_4\left[1+\left(\int_{0}^{R_0} 
   r^{n-1}u^{m+4}_r\,dr\right)^{\frac{m+3}{m+4}}\right]
      \left(\int_{0}^{R_0} 
      r^{n-1}z^{m+4}_+\,dr\right)^{\frac{1}{m+4}}\\
    \nonumber &\quad\,
      +R^{n-1}\cdot L^{m+1}+\frac{R^{n-1}}{C^2_3}\cdot L^{m+3}
      +c_5\left[1+\left(\int_{0}^{R_0} 
      r^{n-1}u^{m+4}_r\,dr\right)^{\frac{m+3}{m+4}}\right]
      \\
    \nonumber &\quad\,
      +c_6+\frac{1}{2c^{3}_3}\int_{0}^{R_0} r^{n-1}u^{m+4}_r\,dr
      +c_7+\frac{c_8}{m+4}\int_{0}^{R_0} r^{n-1}u^{m+4}_r\,dr
       \\
    \nonumber &\quad\,
      +c_9\left[1+\left(\int_{0}^{R_0} 
      r^{n-1}u^{m+4}_r\,dr\right)^{\frac{m+3}{m+4}}\right]
\end{align*}
for all $t\in (0,\tmax)$. 
Here, we put $m_0>0$ satisfying
$\frac{c_8}{m_0+4}\leq \frac{1}{4c^{3}_3}$.
Then the above inequality implies that for all $m\geq m_0$, 
\begin{align}
\frac{1}{4c^{3}_3}\int_{0}^{R_0} r^{n-1}u^{m+4}_r\,dr 
   &\leq c_4\left[1+\left(\int_{0}^{R_0} 
   r^{n-1}u^{m+4}_r\,dr\right)^{\frac{m+3}{m+4}}\right]
      \left(\int_{0}^{R_0} 
      r^{n-1}z^{m+4}_+\,dr\right)^{\frac{1}{m+4}}
      \\
    \nonumber &\quad\,
      +c_{10}\left(\int_{0}^{R_0} 
      r^{n-1}u^{m+4}_r\,dr\right)^{\frac{m+3}{m+4}}+c_{11}L^{m+4}
\end{align}
holds with some $c_{10}, c_{11}>0$.
In order to establish the conclusion of this lemma, we fix $t\geq 0$ 
and first deal with the case that there exists a sequence of even numbers
$m=m_j\geq m_0$, $j\in\mathbb{N}$ satisfying $m_j\to \infty$ as 
$j\to \infty$ and 
$
\left(\int_{0}^{R_0} 
      r^{n-1}u^{m+4}_r\,dr\right)^{\frac{m+3}{m+4}}\leq L^{m+4}
$
for all $m\in (m_j)_{j\in\mathbb{N}}$. Then taking the limit 
$j\to \infty$ implies that  
\begin{align*} 
\|u_r(\cdot,t)\|_{L^\infty(0,R_0)}
=\lim_{j\to \infty}\left(\int_{0}^{R_0} 
      r^{n-1}u_r^{m_j+4}\,dr\right)^{\frac{1}{m_j+4}}
      =\lim_{j\to \infty}L^{\frac{m_j+3}{m_j+4}}=L.
\end{align*}
We next consider the case that there is no such a sequence.
Then we can pick   
$\widetilde{m_0}\geq m_0$ 
such that 
\begin{align*}
\left(\int_{0}^{R_0} 
      r^{n-1}u^{m+4}_r\,dr\right)^{\frac{m+3}{m+4}}> L^{m+4}
\end{align*}
for all even $m\geq \widetilde{m_0}$. 
Plugging this inequality into (5.13) and noting the fact that $L \ge 1$, 
we obtain that 
\begin{align*}
\frac{1}{4c^{3}_3}\left(\int_{0}^{R_0} 
      r^{n-1}u^{m+4}_r\,dr\right)^{\frac{1}{m+4}} 
   \leq 2c_4 \left(\int_{0}^{R_0} 
      r^{n-1}z^{m+4}_+\,dr\right)^{\frac{1}{m+4}}
      +c_{10}
      +c_{11}.
\end{align*}
Taking the limit $m\to \infty$, we can see that 
\begin{align*}
\frac{1}{4c^{3}_3}\|u_r(\cdot,t)\|_{L^\infty(0,R_0)}
 \leq 2c_4 \|z_+(\cdot,t)\|_{L^\infty(0,R_0)}
      +c_{10}
      +c_{11}
\end{align*}
holds for all $t\in(0,\tmax)$. 
\end{proof}
%
%
%
%

Lemma \ref{lem;ur;origin} gives us the estimate for 
$\|u_r(\cdot,t)\|_{L^\infty(0,R_0)}$ with some $R_0$. 
This means that we have boundedness of $u_r$ only on $(0,R_0)$. 
 Next, we obtain an estimate for $\|u_r(\cdot,t)\|_{L^\infty(R_0,R)}$.
\begin{lem}\label{lem;ur;boundary} 
 Assume that $\tmax<\infty$, 
 but that $\sup_{(r,t)\in(0,R)\times(0, \tmax)}u(r,t)<\infty$. 
 Then with $R_0\in(0,R)$ taken from Lemma 5.1, 
 for all $t_0>0$ 
   there exists a constant $C>0$ such that
    \begin{align*}
    {\|u_r(\cdot,t)\|}_{L^{\infty}(R_0,R)}\leq C
   \left(1+{\|z_+\|}_{L^{\infty}((0,R_0)\times(t_0,t))}\right)
    \end{align*}
    for all $t \in (t_0, \tmax)$.
 \end{lem}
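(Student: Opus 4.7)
The plan is to carry out a parabolic comparison argument on the outer annulus $(R_0, R)\times(t_0,\tmax)$ using the operator $\mathcal{Q}$ from \eqref{qphi}, which annihilates $u_r$ by Lemma \ref{urt.lem}. Under the standing hypothesis $\sup u<\infty$, the previous sections supply all the needed coefficient bounds: Lemma \ref{lem;rulingout-extinction} gives $u\geq c_3>0$, Lemma \ref{lem2.5} gives $|v_r|\leq c_2 r$ and $|v_{rr}|\leq c_3'$, and Lemma \ref{lem4.1} supplies $u_r\geq -c_4$. Together with $1/r\leq 1/R_0$ on the annulus, these imply that $A_1$ is bounded below by a positive constant (so that $\mathcal{Q}$ is uniformly parabolic there) and that the remaining coefficients $A_2$, $a_3$, $\widetilde{A}_3$, $\widetilde{A}_4$ are pointwise dominated by quantities that are polynomial in $|u_r|$ with coefficients depending only on $c_1, c_2, c_3, c_3', R_0$.

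Set $M(t):=1+\|z_+\|_{L^\infty((0,R_0)\times(t_0,t))}$, which is nondecreasing in $t$; by Lemma \ref{lem;ur;origin} there exists $C_1>0$ with $u_r(R_0,\tau)\leq C_1 M(t)$ for every $\tau\in(t_0,t)$. I would build a super-solution $\overline{\varphi}(r,t)$ for $\mathcal{Q}$ on $(R_0,R)\times(t_0,\tmax)$ satisfying $\overline{\varphi}(R_0,t)\geq C_1 M(t)$, $\overline{\varphi}(R,t)\geq 0=u_r(R,t)$, and $\overline{\varphi}(r,t_0)\geq \|u_r(\cdot,t_0)\|_{L^\infty(R_0,R)}$; the latter is finite thanks to $t_0>0$ and the classical regularity from Lemma \ref{Lem2.1}. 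A natural ansatz is
\[
\overline{\varphi}(r,t) := \lambda(t)(R-r)+C_1 M(t)+K,
\]
where $K$ is chosen sufficiently large to dominate the initial datum at $t_0$, and $\lambda(t)$ is an auxiliary function---possibly of tangent type as in Lemma \ref{lem4.1}---whose role is to make $\mathcal{Q}\overline{\varphi}\geq 0$. With this ansatz $\overline{\varphi}_{rr}=0$ and $\overline{\varphi}_r=-\lambda(t)\leq 0$, so verifying the super-solution inequality reduces, after absorbing the linear term $\widetilde{A}_3\overline{\varphi}$ and the inhomogeneous term $\widetilde{A}_4$ via the uniform coefficient bounds, to dominating the quadratic contribution $a_3\overline{\varphi}^2$ by $\overline{\varphi}_t+A_2\lambda(t)$. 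The comparison principle then yields $u_r\leq\overline{\varphi}$ on the annulus, which, combined with the lower bound from Lemma \ref{lem4.1}, delivers the asserted estimate.

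The main obstacle is the quadratic term $a_3\overline{\varphi}^2$ in the super-solution computation: while $\overline{\varphi}$ grows at most linearly in $M(t)$, the term $a_3\overline{\varphi}^2$ grows quadratically. This forces either (i) running the comparison on each finite time window $(t_0,T)$ with $T<\tmax$ and exploiting $\tmax<\infty$ to produce constants depending on $\tmax$, or (ii) mimicking the tangent-function strategy of Lemma \ref{lem4.1}, where the blow-up time of the comparison ODE for $\lambda$ is explicitly placed beyond $\tmax$ by tuning coefficients analogous to $\widetilde{c}_4, \widetilde{c}_5, \widetilde{c}_6$. Once the quadratic term is controlled, the remaining computations are routine algebraic estimates on the coefficients, analogous to those carried out in Section \ref{Sec4}.
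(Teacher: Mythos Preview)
Your overall strategy---a comparison argument via the operator $\mathcal{Q}$ on the outer annulus, exploiting Lemma \ref{lem;ur;origin} for the inner boundary condition and a tangent-type ODE supersolution to handle the quadratic term $a_3\varphi^2$---is exactly what the paper does. However, your concrete ansatz introduces two genuine problems that the paper's choice avoids.

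First, your claim that $A_2$ is ``pointwise dominated by quantities that are polynomial in $|u_r|$'' is false: $A_2$ contains the term $-3\,\dfrac{u^{p+2}u_r u_{rr}}{(u^2+u_r^2)^{5/2}}$, and no bound on $u_{rr}$ is available at this stage. Because your ansatz has $\overline{\varphi}_r=-\lambda(t)\neq 0$, you would need to control $A_2\overline{\varphi}_r$ pointwise to verify $\mathcal{Q}\overline{\varphi}\geq 0$, and this is impossible. (For the same reason, your claim that $A_1$ is bounded below by a positive constant is also wrong---$A_1\to 0$ as $|u_r|\to\infty$---but that is harmless since only $A_1\geq 0$ is needed for comparison.) The paper sidesteps this entirely by taking $\overline{\varphi}$ \emph{independent of $r$}, so that $\overline{\varphi}_r=\overline{\varphi}_{rr}=0$ and neither $A_1$ nor $A_2$ ever enters the supersolution inequality.

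Second, $M(t)=1+\|z_+\|_{L^\infty((0,R_0)\times(t_0,t))}$ is in general not differentiable in $t$, so you cannot write down $\overline{\varphi}_t$ with your ansatz. The paper handles this by fixing an arbitrary $t_1\in(t_0,\tmax)$, freezing the boundary data as the constant $D_1(t_0,t_1):=c_1(1+\|z_+\|_{L^\infty((0,R_0)\times(t_0,t_1))})$, and running the comparison on a short window $(t_1-\varepsilon,t_1)$ with a spatially constant $\overline{\varphi}(t)$ that solves $\overline{\varphi}_t=\widetilde{c}_6\overline{\varphi}^2+\widetilde{c}_7\overline{\varphi}+\widetilde{c}_8$ with initial height exceeding both $D_1(t_0,t_1)$ and $\sup_r u_r(r,t_1-\varepsilon)$. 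The tangent-function machinery of Section~\ref{Sec4} is reused verbatim to place the blow-up time of this Riccati ODE beyond $t_1$. Since $t_1$ was arbitrary, this gives the bound for all $t\in(t_0,\tmax)$ with the correct dependence on $\|z_+\|_{L^\infty((0,R_0)\times(t_0,t))}$. So your option (ii) is the right one, but it should be executed with a constant-in-$r$ barrier and a frozen-time constant rather than the moving $M(t)$.
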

\begin{proof}
Thanks to Lemma \ref{lem;ur;origin}, 
we can find a constant $c_1>0$ such that 
\begin{align}\label{urr0}
    u_r(R_0,t)\leq c_1
    \left(1+{\|z_+(\cdot,t)\|}_{L^{\infty}(0,R_0)}\right)
\end{align}
    for all $t \in (0,\tmax)$.
Now we pick $t_0\in(0,\tmax)$. In particular, 
\eqref{urr0} implies that, given any $t_1\in(t_0,\tmax)$, we have
 \begin{align}\label{D_1}
    u_r(R_0,t)\leq D_1(t_0,t_1)
  := c_1\left(1+{\|z_+\|}
      _{L^{\infty}((0,R_0)\times(t_0,t_1))}\right)
    \end{align}
    for all $t \in (t_0,t_1)$.
Next, we use the assumption and recall Lemma \ref{lem;rulingout-extinction} to pick $c_2>0$ and $c_3>0$ such that 
\begin{align*}
c_2\leq u(r,t) \leq c_3\qquad
\end{align*}
    for all $r\in(0, R)$ and all $t\in(0, \tmax)$.
Moreover, Lemma \ref{lem2.5} yields existence of constants 
 $c_4>0$ and $c_5>0$ such that
\begin{align*}
|v_r(r,t)|\leq c_4r\quad \mbox{and} \quad|v_{rr}(r,t)|\leq c_5
 \end{align*}
  for all $ r \in (0, R)$ and all $t \in (0, \tmax)$.
 Therefore, the functions $a_3(r,t)$,  $\widetilde{A_3}(r,t)$ 
 and $\widetilde{A_4}(r,t)$ 
 in \eqref{Atilde} can be estimated according to 
 \begin{align}\label{c6}
 a_3(r,t)\leq c_6&
 :=p(p-1)c^{p-2}_3+q(q-1)\chi c^{q-2}_3,\\ \label{c7}
 \widetilde{A_3}(r,t)\leq c_7
 &:=\frac{(2p-1)(n-1)}{R_0}\cdot c^{p-1}_3+p(3p+1)c^{p-1}_3
 \\
  \nonumber &\quad\ 
   +(q+2)\chi c^q_3+q\chi c^{q-1}_3c^2_4R^2c_5
   +q\chi(n-1)c^{q-1}_3c^3_4R^2
    \end{align}
and
 \begin{align}\label{c8}
 \widetilde{A_4}(r,t)\leq c_8
 &:=\frac{n-1}{R_0}\cdot c^{p-1}_3
    +3\chi\mu c^q_3 c_4 Rc_5+3\chi c^{q-1}_3 c_4Rc_5
 \\
  \nonumber &\quad\ 
   +\chi(n-1) c^q_3 c^3_4R+3\chi (n-1) c^q_3c^2_4R{c_5}
 \end{align}
 for all $ r \in (0, R)$ and all $t \in (0, \tmax)$.
We now take $\widetilde{c_6}>0$ and $\widetilde{c_7}>0$ fulfilling
$\widetilde{c_6}>c_6$, $\widetilde{c_7}>c_7$
and 
 \begin{align}\label{4c6c7c8}
4\widetilde{c_6}c_8-{\widetilde{c_7}}^2<0.
\end{align}
Then there exist $n\in\mathbb{N}$ 
and $j\in\{0,1,2,3,4,5\}$ such that
\begin{align*}
  2\cdot\frac{(6n-5+j)\pi}{6\cdot\frac{2}{3}}
  <t_1<2\cdot\frac{(6n-6+j)\pi}{6\cdot\frac{2}{3}}.
\end{align*}
Therefore we can find $\ep>0$ such that
\begin{align*}
 2\cdot\frac{(6n-5+j)\pi}{6\cdot\frac{2}{3}}
 <t_1-\ep<t_1
 <2\cdot\frac{(6n-6+j)\pi}{6\cdot\frac{2}{3}},
\end{align*}
and then there exists $\alpha_0>0$ such that  
\begin{align}\label{talpha-2}
 2\cdot\frac{(6n-5+j)\pi}{6\cdot(\frac{2}{3}+\alpha)}
 <t_1-\ep<t_1
 <2\cdot\frac{(6n-6+j)\pi}{6\cdot(\frac{2}{3}+\alpha)}
\end{align} 
for all $\alpha\in(0,{\alpha}_0)$.
Since $x\tan{\frac{\pi}{3x}}\to\infty$ 
as $x\searrow\frac{2}{3}$, 
 we can find $\alpha_1\in(0,\alpha_0)$ such that
\begin{align*}
 \max\left\{D_1(t_0,t_1),\ 
 \sup_{r\in(0,R)}{ u_r(r,\tmax-\ep)}\right\}
 \leq\frac{1}{2\widetilde{c_6}}\left(\frac{2}{3}+\alpha_1\right)
       \tan{\frac{\pi}{3\cdot\left(\frac{2}{3}+\alpha_1\right)}}
 -\frac{\widetilde{c_7}}{2\widetilde{c_6}}.
\end{align*}
Aided by \eqref{4c6c7c8} and the fact that 
$4\widetilde{c_6}x-{\widetilde{c_7}}^2\to\infty$ as $x\to\infty$, 
we obtain from the intermediate value theorem that there is a 
constant $\widetilde{c_8}>c_8$ such that
\begin{align}\label{alpha1-2}
 \sqrt{4\widetilde{c_6}\widetilde{c_8}
 -{\widetilde{c_7}}^2}=\frac{2}{3}+\alpha_1.
\end{align}
 Combination of \eqref{talpha-2} and \eqref{alpha1-2} with
  $\alpha=\alpha_1$ implies that 
\begin{align*}
 \frac{(6n-6+j)\pi}{6\cdot\sqrt{\frac{4\widetilde{c_6}\widetilde{c_8}
 -{\widetilde{c_7}}^2}{4}}}
 <t_1-\ep<t_1<
 \frac{(6n-5+j)\pi}{6\cdot\sqrt{\frac{4\widetilde{c_6}\widetilde{c_8}
 -{\widetilde{c_7}}^2}{4}}}.
\end{align*}
We define a comparison function $\overline{\varphi}$ by letting
  \begin{equation*}
  \overline{\varphi}(r,t):=    
     \begin{cases}
        D-\frac{\widetilde{c_7}}{2\widetilde{c_6}},
        &t=0,
\\[5mm] 
      \sqrt{\widetilde{C}}
      \tan\left[\tan^{-1}\frac{D}{\sqrt{\widetilde{C}}}
      +\widetilde{c_6}\sqrt{\widetilde{C}}\left(t-\frac{j\pi}
      {6\widetilde{c_6}\sqrt{\widetilde{C}}}\right)\right]
      -\frac{\widetilde{c_7}}{2\widetilde{c_6}},
      &t\in\left(\frac{(6n-6+j)\pi}
          {6\widetilde{c_6}\sqrt{\widetilde{C}}},
          \frac{(6n-5+j)\pi}{6\widetilde{c_6}
          \sqrt{\widetilde{C}}}\right]
     \end{cases}
 \end{equation*}
 for $r\in[R_0,R]$, $t\in[t_0,t_1]$, $n\in\mathbb{N}$ 
 and $j\in\{0,1,2,3,4,5\}$, where
 \begin{align*}
 \widetilde{C}
 :=\frac{4\widetilde{c_6}\widetilde{c_8}-{\widetilde{c_7}}^2}
 {4{\widetilde{c_6}}^2}\quad\mbox{and}\quad 
 D:=\frac{1}{2\widetilde{c_6}}\left(\frac{2}{3}+\alpha_1\right)
      \tan{\frac{\pi}{3\cdot\left(\frac{2}{3}+\alpha_1\right)}}.
 \end{align*}
Here we can verify that 
\[\overline{\varphi}(r,t)\geq\overline{\varphi}(r,0)>0\]
for all $r\in(R_0,R)$ and all $t\in(0, \tmax)$ since 
$\overline{\varphi}$ is a monotonically increasing function 
with respect to 
 $t\in\left(\frac{(6n-6+j)\pi}
          {6\widetilde{c_6}\sqrt{\widetilde{C}}},
          \frac{(6n-5+j)\pi}{6\widetilde{c_6}
          \sqrt{\widetilde{C}}}\right]$ 
 for all $n\in\mathbb{N}$ and all $j\in\{0,1,2,3,4,5\}$.   
 Moreover, from the facts that 
 $\overline{\varphi}_r= \overline{\varphi}_{rr}\equiv0$ and that
 ${\overline{\varphi}}_t=\widetilde{c_6}{\overline{\varphi}}^2
   +\widetilde{c_7}\overline{\varphi}+\widetilde{c_8}$, 
 we use \eqref{c6}, \eqref{c7} and \eqref{c8} to see that  
 with $\mathcal{Q}$ as in \eqref{qphi} we have
  \begin{align*}
   (\mathcal{Q}\overline{\varphi})(r,t)
   &={\overline{\varphi}}_t-a_3(r,t){\overline{\varphi}}^2
   - \widetilde{A_3}(r,t)\overline{\varphi}
   - \widetilde{A_4}(r,t)\\
   &\geq{\overline{\varphi}}_t-|a_3(r,t)|{\overline{\varphi}}^2
   -|\widetilde{A_3}(r,t)|\overline{\varphi}
   -|\widetilde{A_4}(r,t)|\\
   &\geq{\overline{\varphi}}_t-c_6{\overline{\varphi}}^2
   -c_7\overline{\varphi}-c_8\\
   &=(\widetilde{c_6}-c_6){\overline{\varphi}}^2
   +(\widetilde{c_7}-c_7)\overline{\varphi}+(\widetilde{c_8}-c_8)
  \end{align*}
  for all $ r \in (R_0, R)$ and all $t \in [t_1-\ep, t_1]$.
   Then the relations that 
   $\widetilde{c_6}>c_6,\,\widetilde{c_7}>c_7$ 
   and $\widetilde{c_8}>c_8$ 
   ensure that 
 \[(\mathcal{Q}\overline{\varphi})(r,t)>0 \]
   for all $r\in(R_0,R)$ and all $t\in(t_1-\ep, t_1)$.
   Since 
      \[(\mathcal{Q}u_r)(r,t)=0 \]
   for all $(r,t)\in(R_0,R)\times(t_0,t_1)$,
   and since
   \begin{align*}
   u_r(r,t_1-\ep)
     &<\sup_{r\in(0,R)}u_r(r,t_1-\ep)\\
     &\leq\frac{1}{2\widetilde{c_6}}\left(\frac{2}{3}+\alpha_1\right)
            \tan{\frac{\pi}{3\cdot\left(\frac{2}{3}+\alpha_1\right)}}
        -\frac{\widetilde{c_7}}{2\widetilde{c_6}}
     =\overline{\varphi}(r,0)\leq\overline{\varphi}(r,t_1-\ep)
   \end{align*}
for all $r\in[R_0,R]$ and 
 \begin{align*}
  0=u_r(R,t)\leq\overline{\varphi}(R,t)
\end{align*} 
for all $t \in [t_1-\ep, t_1]$ and moreover 
 \begin{align*}
  u_r(R_0,t)\leq D_1(t_0,t_1)
     &\leq \frac{1}{2\widetilde{c_6}}\left(\frac{2}{3}+\alpha_1\right)
            \tan{\frac{\pi}{3\cdot\left(\frac{2}{3}+\alpha_1\right)}}
       -\frac{\widetilde{c_7}}{2\widetilde{c_6}} =\overline{\varphi}(r,0) \leq\overline{\varphi}(r,t) 
\end{align*}  
for all $ r \in [R_0, R]$ and  all $t \in [t_1-\ep, t_1]$, in particular, 
 $u_r(R_0,t)\leq\overline{\varphi}(R_0,t)$ for all $t\in [t_1-\ep, t_1]$, 
the comparison principle derives that
 $u_r(r,t)\leq \overline{\varphi}(r,t)$
for all $ r \in [R_0, R]$ and  all $t \in [t_1-\ep, t_1]$. 
Therefore by putting
\begin{align*}
  C:=  \max\left\{\overline{\varphi}(t_1),\, 
  \max_{(r,t)\in(R_0, R)\times(t_1-\ep, t_1)}{u_r(r,t)}\right\}
\end{align*}
we have this lemma.
\end{proof}
%
%
%
In summary, 
we obtain the following result which shows that $u_r$ is bounded by $z_+$. 
\begin{corollary}\label{ur;esti;z}
 Assume that $\tmax<\infty$, 
 but that $\sup_{(r,t)\in(0,R)\times(0, \tmax)}u(r,t)<\infty$.
 For all $t_0>0$, 
   there exists a constant $C>0$ such that
    \begin{align*}
    {\|u_r(\cdot,t)\|}_{L^{\infty}(0,R)}\leq C
    \left(1+{\|z_+\|}_{L^{\infty}((0,R)\times(t_0,t))}\right)
    \end{align*}
    for all $t \in (t_0, \tmax)$.
\end{corollary}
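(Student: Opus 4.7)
The plan is to deduce this corollary directly by splicing together the two preceding lemmas, which cover complementary spatial regions and already produce right-hand sides of the required form. Since both lemmas are conditioned on exactly the hypotheses of the corollary ($\tmax<\infty$ and boundedness of $u$), no preparation is needed.

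First I would invoke Lemma \ref{lem;ur;origin} to obtain a radius $R_0\in(0,R)$ and a constant $C_1>0$ with
\[
\|u_r(\cdot,t)\|_{L^\infty(0,R_0)}\leq C_1\bigl(1+\|z_+(\cdot,t)\|_{L^\infty(0,R_0)}\bigr)
\qquad \text{for all } t\in(0,\tmax).
\]
The statement of Lemma \ref{lem;ur;boundary} is phrased in terms of this very $R_0$, so with the prescribed $t_0>0$ it immediately yields a constant $C_2>0$ with
\[
\|u_r(\cdot,t)\|_{L^\infty(R_0,R)}\leq C_2\bigl(1+\|z_+\|_{L^\infty((0,R_0)\times(t_0,t))}\bigr)
\qquad \text{for all } t\in(t_0,\tmax).
\]
Because $\|u_r(\cdot,t)\|_{L^\infty(0,R)}=\max\{\|u_r(\cdot,t)\|_{L^\infty(0,R_0)},\,\|u_r(\cdot,t)\|_{L^\infty(R_0,R)}\}$ and, for every $t\in(t_0,\tmax)$, both
\[
\|z_+(\cdot,t)\|_{L^\infty(0,R_0)}\leq\|z_+\|_{L^\infty((0,R)\times(t_0,t))}
\]
(using continuity of $z$ up to the time $t$) and
\[
\|z_+\|_{L^\infty((0,R_0)\times(t_0,t))}\leq\|z_+\|_{L^\infty((0,R)\times(t_0,t))}
\]
hold, adding the two bounds and setting $C:=C_1+C_2$ gives the desired estimate.

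There is no genuine obstacle at this level: the analytic content — the energy-type argument on $(0,R_0)$ via the quantity $I(t)=p\int_0^{R_0} r^{n-1}u_r^{m+4}u^{-3}\,dr$ and the parabolic comparison via the tangent-type barrier $\overline{\varphi}$ on $(R_0,R)$ — has already been deployed in Lemmas \ref{lem;ur;origin} and \ref{lem;ur;boundary}. The only mild subtlety is to ensure that the same splitting radius $R_0$ is used in both lemmas, which is built into the phrasing of Lemma \ref{lem;ur;boundary}, and that the pointwise-in-time norm $\|z_+(\cdot,t)\|_{L^\infty(0,R_0)}$ from Lemma \ref{lem;ur;origin} can be absorbed into the space-time slab norm appearing on the right-hand side; both are routine.
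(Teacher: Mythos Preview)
Your proposal is correct and follows exactly the approach of the paper, which simply states that the combination of Lemmas \ref{lem;ur;origin} and \ref{lem;ur;boundary} directly yields the corollary. You have spelled out the routine splicing (matching the radius $R_0$ and absorbing the two $z_+$-norms into the larger space--time slab norm) that the paper leaves implicit.
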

\begin{proof}
Combination of Lemmas \ref{lem;ur;origin} and \ref{lem;ur;boundary} directly derives this corollary. 
\end{proof}
\subsection{Nonlocal parabolic inequality for\ $z$}
Since our goal is to see that 
$\|u_r(\cdot,t)\|_{L^\infty(0,R)}\leq C$ holds 
for all $t$ with some $C>0$, 
we desire  boundedness of $z_+$. 
Thus it is necessary to observe properties of $z$. 
We first differentiate $z$ with respect to $t$.     
%
%
\begin{lem}\label{lem5.4}
The function $z=\frac{u_t}{u}$ satisfies
\begin{align}\label{z_t}
z_t=B_1(r,t)z_{rr}+B_{21}(r,t)z_r+\frac{B_{22}(r,t)}{r}z_r
+(p-1)z^2+B_3(r,t)z+B_4(r,t)
\end{align} 
 for all $ r \in (0, R)$ and all $t \in (0, \tmax)$, where
   \begin{align}\label{B}
    B_1(r,t)&:=\frac{u^{p+2}}{{\sqrt{u^2+u^2_r}}^3},
    \\ 
    \nonumber
    B_{21}(r,t)&:=2\frac{u^{p+1}u_r}{\sqrt{u^2+u^2_r}}
          -3\frac{u^{p+2}u_ru{rr}}{{\sqrt{u^2+u^2_r}}^5}
          +4p\frac{u^{p-1}u^3_r}{{\sqrt{u^2+u^2_r}}^3}
          -3p\frac{u^{p-1}u^5_r}{{\sqrt{u^2+u^2_r}}^5}\\
    \nonumber &\quad\,          
          +(p-1)\frac{u^{p+1}u_r}{{\sqrt{u^2+u^2_r}}^5}(2u^2-u^2_r)
          -\chi\frac{u^{q-1}v_r}{\sqrt{1+v^2_r}},
    \\
    \nonumber
    B_{22}(r,t)&:=(n-1)\frac{u^{p+2}}{{\sqrt{u^2+u^2_r}}^3},
    \\
    \nonumber
    B_3(r,t)&:=\chi\frac{u^q}{{\sqrt{1+v^2_r}}^3}
          +(p-q)\chi\frac{u^{q-1}}{{\sqrt{1+v^2_r}}^3}
          \left(\mu-u+\frac{n-1}{r}v^3_r\right)
    \\
    \nonumber &\quad\,
         + (pq-2q-1)\chi\frac{u^{q-1}u_rv_r}{u\sqrt{1+v^2_r}},
   \\
    \nonumber
    B_4(r,t)&:=-3\chi\frac{u^{p+q-1}(\mu-u)u_rv_r}
    {\sqrt{u^2+u^2_r}{\sqrt{1+v^2_r}}^5}
               +3{\chi}^2\frac{u^{2q-1}(\mu-u)v^2_r}
    {(1+v^2_r)^3}
    \\
    \nonumber &\quad\,
         +\chi\frac{u^{p+q-2}u^2_r}
    {\sqrt{u^2+u^2_r}{\sqrt{1+v^2_r}}^3}
             -{\chi}^2\frac{u^{2q-2}u_rv_r}
    {(1+v^2_r)^2}
    \\
    \nonumber &\quad\,
    +3\chi\frac{n-1}{r}\cdot\frac{u^{p+q-1}u_rv^2_r}
    {\sqrt{u^2+u^2_r}{\sqrt{1+v^2_r}}^5}
     -3{\chi}^2\frac{n-1}{r}\cdot\frac{u^{2q-1}v^3_r}
    {(1+v^2_r)^3}      
\end{align}  
 for $ r \in (0, R)$ and $t \in (0, \tmax)$.  
\end{lem}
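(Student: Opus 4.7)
The plan is to differentiate \eqref{z}, which expresses $z=u_t/u$ as a sum of seven monomial-type terms $G_1,\dots,G_7$ in the primary variables $u,u_r,u_{rr},v_r$, weighted by factors $\sqrt{u^2+u_r^2}^{-k}$ and $\sqrt{1+v_r^2}^{-l}$, with respect to $t$. Each $\partial_t G_i$ is expanded via the product and chain rules, and the time derivatives are then replaced using $u_t=uz$ (by definition of $z$), $u_{rt}=u_r z+u z_r$ and $u_{rrt}=u_{rr}z+2u_r z_r+u z_{rr}$ (from differentiating $u_t=uz$ in $r$), together with the formula for $v_{rt}$ supplied by Lemma \ref{lem2.4}. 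Since no $G_i$ depends on $v_{rr}$, no expression for $v_{rrt}$ is required. After these substitutions each $\partial_t G_i$ is a linear combination of $z_{rr}$, $z_r$ and $z$, plus a $z$-free remainder.

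The conceptual heart of the proof is the emergence of the $(p-1)z^2$ term. View each $G_i$ as a function of $(u,u_r,u_{rr})$ alone, where the factor $\sqrt{u^2+u_r^2}^{-k}$ carries weight $-k$. Then a direct check shows that $G_i$ is positively homogeneous of total degree $d_i$, with $d_i=p-1$ for $i=1,2,3,4$ and $d_i\in\{q-1,\ q\}$ for $i=5,6,7$, where $G_6$ must be split into its $\mu$-piece and its $u$-piece to accommodate the two degrees. The Euler identity $(u\partial_u+u_r\partial_{u_r}+u_{rr}\partial_{u_{rr}})G_i=d_i G_i$ then implies that the coefficient of $z$ in $\partial_t G_i$ coming from the three substitutions for $u_t,u_{rt},u_{rrt}$ equals exactly $d_i G_i$. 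Summing over $i$ and using the decomposition $\sum_i d_i G_i=(p-1)\sum_i G_i+\sum_i(d_i-p+1)G_i$, the first sum equals $(p-1)z$ by \eqref{z}, so that multiplying the total $z$-coefficient by $z$ produces $(p-1)z^2+B_3(r,t)z$, in which $B_3$ collects the mismatch terms $(q-p)(G_5+G_{6,1}+G_7)+(q-p+1)G_{6,2}$; re-expanding $\mu-u$ inside $G_6$ recovers the factored form displayed in \eqref{B}.

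The remaining coefficients are then read off directly. The $z_{rr}$-contribution arises only from $G_1$ (the sole term carrying $u_{rr}$) and yields $B_1(r,t)=u^{p+2}/\sqrt{u^2+u_r^2}^3$. The $z_r$-contributions, collected across all seven terms and after simplifying the products $-3u u_r z_r\cdot(u^2+u_r^2)^{-5/2}$ arising from differentiating the weight factors, assemble into $B_{21}(r,t)+B_{22}(r,t)/r$; the $(n-1)/r$ factor separating off $B_{22}$ originates in the radial Laplacian pieces of $G_3$ and $G_7$. The $z$-free remainder consists exclusively of products involving $v_{rt}$, coming from $G_5,G_6,G_7$; applying Lemma \ref{lem2.4} and simplifying brackets such as $1/\sqrt{1+v_r^2}-v_r^2/\sqrt{1+v_r^2}^3=1/\sqrt{1+v_r^2}^3$ reproduces the six summands of $B_4$. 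The main obstacle is purely the volume of bookkeeping: each of the seven terms generates many product-rule pieces and each substitution for $u_{rt}$ or $u_{rrt}$ must be split into its $z$- and $z_r$-contributions. Once the homogeneity-based extraction of $(p-1)z^2$ is recognised, the identification of $B_1,B_{21},B_{22},B_3,B_4$ reduces to a routine but lengthy collation of coefficients.
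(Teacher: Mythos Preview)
Your proposal is correct and follows the paper's overall strategy: differentiate \eqref{z} term by term, substitute $u_t=uz$, $u_{rt}=u_rz+uz_r$, $u_{rrt}=u_{rr}z+2u_rz_r+uz_{rr}$ together with the formula for $v_{rt}$ from Lemma~\ref{lem2.4}, and then collect coefficients of $z_{rr}$, $z_r$, $z$ and the $z$-free remainder. The one genuine difference is your extraction of the $(p-1)z^2$ term via Euler's identity for positively homogeneous functions. The paper instead computes each $(G_i)_t$ explicitly, records the resulting $z$-coefficient, and only afterwards (in the displayed identity preceding the final form of $z_t$) observes by direct inspection that the accumulated $z$-coefficients reassemble into $(p-1)z$ plus leftover $\chi$-terms. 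Your homogeneity argument explains \emph{a priori} why the $z$-coefficient of $\partial_t G_i$ is exactly $d_iG_i$, so that $\sum d_iG_i=(p-1)\sum G_i+\sum(d_i-p+1)G_i=(p-1)z+B_3$ is forced structurally rather than discovered after the fact; this makes the appearance of the clean quadratic $(p-1)z^2$ transparent and shortens the bookkeeping for $B_3$. The trade-off is that the paper's brute-force route, while longer, simultaneously produces the explicit $z_r$-coefficients needed for $B_{21}$ and $B_{22}$, whereas in your approach these still have to be collated separately (as you acknowledge). Both routes arrive at the same identity; yours is conceptually cleaner for the nonlinear term, the paper's is a single uniform computation.
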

\begin{proof}
The proof is based on an argument in the proof of 
\cite[Lemma 5.4]{B-W}. 
First we differentiate \eqref{z} with respect to $t$ to see that
 \begin{align}\label{zt}
  z_t&=  \left(\frac{u^{p+1}u_{rr}}{{\sqrt{u^2+u^2_r}}^3}\right)_t
 + p\left(\frac{u^{p-2}u^4_r}{{\sqrt{u^2+u^2_r}}^3}\right)_t
 +\frac{n-1}{r}\left(\frac{u^{p-1}u_r}{\sqrt{u^2+u^2_r}}\right)_t
  \\
    \nonumber &\quad\,
 +(p-1)\left(\frac{u^pu^2_r}{{\sqrt{u^2+u^2_r}}^3}\right)_t
 -q\chi\left(\frac{u^{q-2}u_rv_r}{\sqrt{1+v^2_r}}\right)_t
 -\chi\left(\frac{u^{q-1}(\mu-u)}{{\sqrt{1+v^2_r}}^3}\right)_t
  \\
    \nonumber &\quad\,
 -\chi\frac{n-1}{r} \left(\frac{u^{q-1}v^3_r}
{{\sqrt{1+v^2_r}}^3} \right)_t
 \end{align}
  for all $ r \in (0, R)$ and all $t \in (0, \tmax)$.  
  Now, rewriting $u_t, u_{rt}$ and $u_{rrt}$ as $u_t=uz, u_{rt}=uz_r+u_rz$ and $u_{rrt}=uz_{rr}+2u_rz_r+u_{rr}z$, we obtain 
 \begin{align*}
  \left(\frac{u^{p+1}u_{rr}}{{\sqrt{u^2+u^2_r}}^3}\right)_t
& = \frac{u^{p+1}u_{rrt}}{{\sqrt{u^2+u^2_r}}^3}
 +(p+1)\frac{u^pu_tu_{rr}}{{\sqrt{u^2+u^2_r}}^3}
  -\frac{3}{2}\cdot\frac{u^{p+1}u_{rr}(2uu_t+2u_ru_{rt})}
  {{\sqrt{u^2+u^2_r}}^5}\\
& =\frac{u^{p+2}}{{\sqrt{u^2+u^2_r}}^3}z_{rr}
 +2\frac{u^{p+1}u_r}{{\sqrt{u^2+u^2_r}}^3}z_r
 +\frac{u^{p+1}u_{rr}}{{\sqrt{u^2+u^2_r}}^3}z
 +(p+1)\frac{u^{p+1}u_{rr}}{{\sqrt{u^2+u^2_r}}^3}z
  \\
    \nonumber &\quad\,
 -3\frac{u^{p+3}u_{rr}}{{\sqrt{u^2+u^2_r}}^5}z
 -3\frac{u^{p+2}u_ru_{rr}}{{\sqrt{u^2+u^2_r}}^5}z_r
 -3\frac{u^{p+1}u^2_ru_{rr}}{{\sqrt{u^2+u^2_r}}^5}z.
 \end{align*}
Simplifying the third, fourth, fifth and sixth terms 
on this identity according to 
  \begin{align*}
  \frac{u^{p+1}u_{rr}}{{\sqrt{u^2+u^2_r}}^3}z 
  &+(p+1)\frac{u^{p+1}u_{rr}}{{\sqrt{u^2+u^2_r}}^3}z
  -3\frac{u^{p+3}u_{rr}}{{\sqrt{u^2+u^2_r}}^5}z
  -3\frac{u^{p+1}u^2_ru_{rr}}{{\sqrt{u^2+u^2_r}}^5}z\\
  &= \frac{u^{p+1}u_{rr}}{{\sqrt{u^2+u^2_r}}^5}z
  \left((u^2+u^2_r)+(p+1)(u^2+u^2_r)
  -3u^2-3u^2_r\right) \\
  &=(p-1)\frac{u^{p+1}u_{rr}}{{\sqrt{u^2+u^2_r}}^3}z ,
  \end{align*}
 we obtain
  \begin{align}\label{zt1}
  \left(\frac{u^{p+1}u_{rr}}{{\sqrt{u^2+u^2_r}}^3}\right)_t
  &=\frac{u^{p+2}}{{\sqrt{u^2+u^2_r}}^3}z_{rr}
 +\left(2\frac{u^{p+1}u_r}{{\sqrt{u^2+u^2_r}}^3}
 -3\frac{u^{p+2}u_ru_{rr}}{{\sqrt{u^2+u^2_r}}^5}\right)z_r
 \\
    \nonumber &\quad\,
 +(p-1)\frac{u^{p+1}u_{rr}}{{\sqrt{u^2+u^2_r}}^3}z
  \end{align}
   for all $ r \in (0, R)$ and all $t \in (0, \tmax)$.  Similarly, we have
 \begin{align}\label{zt2}
   \left(\frac{u^{p-2}u^4_r}{{\sqrt{u^2+u^2_r}}^3}\right)_t
  &=\left(4\frac{u^{p-1}u^3_r}{{\sqrt{u^2+u^2_r}}^3}
 -3\frac{u^{p-1}u^5_r}{{\sqrt{u^2+u^2_r}}^5}\right)z_r
 +(p-1)\frac{u^{p-2}u^4_r}{{\sqrt{u^2+u^2_r}}^3}z,
 \\ \label{zt3}
  \left(\frac{u^{p-1}u_r}{\sqrt{u^2+u^2_r}}\right)_t
 &=\frac{u^{p+2}}{{\sqrt{u^2+u^2_r}}^3}z_r
 +(p-1)\frac{u^{p-1}u_r}{\sqrt{u^2+u^2_r}}z,
\\ \label{zt4}
  \left(\frac{u^pu^2_r}{{\sqrt{u^2+u^2_r}}^3}\right)_t
 &=\frac{u^{p+1}u_r}{{\sqrt{u^2+u^2_r}}^5}(2u^2-u^2_r)z_r
 +(p-1)\frac{u^pu^2_r}{{\sqrt{u^2+u^2_r}}^3}z.
 \end{align}
 Next, we calculate the fourth term of \eqref{zt}
  and use the relations 
  $u_t=uz$ and $u_{rt}=uz_r+u_rz$ to see that
 \begin{align*}
\left(\frac{u^{q-2}u_rv_r}{\sqrt{1+v^2_r}}\right)_t
 &=\frac{(q-2)u^{q-3}u_tu_rv_r+u^{q-2}u_{rt}v_r+u^{q-2}u_rv_{rt}}
 {\sqrt{1+v^2_r}}
 -\frac{u^{q-2}u_rv^2_rv_{rt}}{\sqrt{1+v^2_r}^3}\\
 \nonumber
 &=(q-1)\frac{u^{q-2}u_rv_r}{\sqrt{1+v^2_r}}z
     +\frac{u^{q-2}v_r}{\sqrt{1+v^2_r}}z_r
     +\frac{u^{q-2}u_r}{\sqrt{1+v^2_r}^3}v_{rt}.
    \end{align*}
 Thanks to Lemma \ref{lem2.4}, we can moreover rewrite $v_{rt}$
 to obtain
\begin{align} \label{zt5}
\left(\frac{u^{q-2}u_rv_r}{\sqrt{1+v^2_r}}\right)_t
 &=(q-1)\frac{u^{q-2}u_rv_r}{\sqrt{1+v^2_r}}z
     +\frac{u^{q-2}v_r}{\sqrt{1+v^2_r}}z_r
     -\frac{u^{p+q-2}u^2_r}
    {\sqrt{u^2+u^2_r}{\sqrt{1+v^2_r}}^3} 
 \\ 
    \nonumber &\quad\,
             +\chi\frac{u^{2q-2}u_rv_r}
    {(1+v^2_r)^2},  
    \end{align}
    as well as
\begin{align} \label{zt6}
  \left(\frac{u^{q-1}(\mu-u)}{{\sqrt{1+v^2_r}}^3}\right)_t
 &=-\frac{u^q}{{\sqrt{1+v^2_r}}^3}z
         +(q-1)\frac{u^{q-1}(\mu-u)}{{\sqrt{1+v^2_r}}^3}z
 \\
    \nonumber &\quad\,
    +3\frac{u^{p+q-1}(\mu-u)u_rv_r}
    {\sqrt{u^2+u^2_r}{\sqrt{1+v^2_r}}^5}
               -3\chi\frac{u^{2q-1}(\mu-u)v^2_r}
    {(1+v^2_r)^3}
\\ \label{zt7}
  \left(\frac{u^{q-1}v^3_r}
{{\sqrt{1+v^2_r}}^3} \right)_t
 &=(q-1)\frac{u^{q-1}v^3_r}
{{\sqrt{1+v^2_r}}^3}z
+3\chi\frac{u^{2q-1}v^3_r}
    {(1+v^2_r)^3}
 -3\frac{u^{p+q-1}u_rv^2_r}
    {\sqrt{u^2+u^2_r}{\sqrt{1+v^2_r}}^5},   
 \end{align}
 for all $ r \in (0, R)$ and all $t \in (0, \tmax)$.
 In summary, \eqref{zt1}--\eqref{zt7} combined with \eqref{zt} 
 show that
 \begin{align}\label{ztlong}
 z_t&=\frac{u^{p+2}}{{\sqrt{u^2+u^2_r}}^3}z_{rr}
 +\left(2\frac{u^{p+1}u_r}{{\sqrt{u^2+u^2_r}}^3}
 -3\frac{u^{p+2}u_ru_{rr}}{{\sqrt{u^2+u^2_r}}^5}\right)z_r 
 +(p-1)\frac{u^{p+1}u_{rr}}{{\sqrt{u^2+u^2_r}}^3}z
  \\
    \nonumber &\quad\,
    +\left(4p\frac{u^{p-1}u^3_r}{{\sqrt{u^2+u^2_r}}^3}
 -3p\frac{u^{p-1}u^5_r}{{\sqrt{u^2+u^2_r}}^5}\right)z_r
 +p(p-1)\frac{u^{p-2}u^4_r}{{\sqrt{u^2+u^2_r}}^3}z
  \\
    \nonumber &\quad\,
 +\frac{n-1}{r} \cdot \frac{u^{p+2}}{{\sqrt{u^2+u^2_r}}^3}z_r
 +(p-1)\frac{n-1}{r}\cdot\frac{u^{p-1}u_r}{\sqrt{u^2+u^2_r}}z
  \\
    \nonumber &\quad\,
  +(p-1)\frac{u^{p+1}u_r}{{\sqrt{u^2+u^2_r}}^5}(2u^2-u^2_r)z_r
 +(p-1)^2\frac{u^pu^2_r}{{\sqrt{u^2+u^2_r}}^3}z
     \\
    \nonumber &\quad\,
    -\chi\frac{u^{q-1}v_r}{\sqrt{1+v^2_r}}z_r
 -(q-1)\chi\frac{u^{q-2}u_rv_r}{\sqrt{1+v^2_r}}z
    +\chi\frac{u^{p+q-2}u^2_r}
    {\sqrt{u^2+u^2_r}{\sqrt{1+v^2_r}}^3}
    \\
    \nonumber &\quad\,
             -{\chi}^2\frac{u^{2q-2}u_rv_r}
    {(1+v^2_r)^2}
    +\chi\frac{u^q}{{\sqrt{1+v^2_r}}^3}z
         -(q-1)\chi\frac{u^{q-1}(\mu-u)}{{\sqrt{1+v^2_r}}^3}z
         \\
    \nonumber &\quad\,
     -3\chi\frac{u^{p+q-1}(\mu-u)u_rv_r}
    {\sqrt{u^2+u^2_r}{\sqrt{1+v^2_r}}^5}
               +3{\chi}^2\frac{u^{2q-1}(\mu-u)v^2_r}
    {(1+v^2_r)^3}
     \\
    \nonumber &\quad\,
    -(q-1)\chi\cdot\frac{n-1}{r}\frac{u^{q-1}v^3_r}
  {{\sqrt{1+v^2_r}}^3}z
-3{\chi}^2\frac{n-1}{r}\cdot\frac{u^{2q-1}v^3_r}
     {(1+v^2_r)^3}
      \\
    \nonumber &\quad\,
     +3\chi\frac{n-1}{r}\cdot\frac{u^{p+q-1}u_rv^2_r}
    {\sqrt{u^2+u^2_r}{\sqrt{1+v^2_r}}^5}  
\end{align}
  for all $ r \in (0, R)$ and all $t \in (0, \tmax)$. 
Now we simplify the third, fifth, seventh, ninth,
   eleventh, fifteenth and eighteenth terms
   on the right-hand side.  
 Recalling the definition of $z$ (see \eqref{z}), 
 we rearrange with the new quantity $(p-1)z^2$ such that 
 \begin{align}\label{p-1z^2}
 &(p-1)\frac{u^{p+1}u_{rr}}{{\sqrt{u^2+u^2_r}}^3}z
 +p(p-1)\frac{u^{p-2}u^4_r}{{\sqrt{u^2+u^2_r}}^3}z
 +(p-1)\frac{n-1}{r}\cdot\frac{u^{p-1}u_r}{\sqrt{u^2+u^2_r}}z
  \\
    \nonumber &\quad\,
 +(p-1)^2\frac{u^pu^2_r}{{\sqrt{u^2+u^2_r}}^3}z
  -(q-1)\chi\frac{u^{q-2}u_rv_r}{\sqrt{1+v^2_r}}z
  -(q-1)\chi\frac{u^{q-1}(\mu-u)}{{\sqrt{1+v^2_r}}^3}z
   \\
    \nonumber &\quad\,
   -(q-1)\chi\frac{n-1}{r}\cdot\frac{u^{q-1}v^3_r}
  {{\sqrt{1+v^2_r}}^3}z
  \\  \nonumber
 &=(p-1)z^2+(pq-2q+1)\chi\frac{u^{q-2}u_rv_r}{\sqrt{1+v^2_r}}z+(p-q)\chi\frac{u^{q-1}(\mu-u)}{{\sqrt{1+v^2_r}}^3}z
  \\
    \nonumber &\quad\,
    +(p-q)\chi\frac{n-1}{r}\cdot\frac{u^{q-1}v^3_r}
     {{\sqrt{1+v^2_r}}^3}z.
 \end{align}
Thus plugging \eqref{p-1z^2} into \eqref{ztlong} implies that 
 \begin{align*}
 z_t&=\frac{u^{p+2}}{{\sqrt{u^2+u^2_r}}^3}z_{rr}
 \\
    \nonumber &\quad\,
 +\left(2\frac{u^{p+1}u_r}{{\sqrt{u^2+u^2_r}}^3}
 -3\frac{u^{p+2}u_ru_{rr}}{{\sqrt{u^2+u^2_r}}^5}\right)z_r 
    +\left(4p\frac{u^{p-1}u^3_r}{{\sqrt{u^2+u^2_r}}^3}
 -3p\frac{u^{p-1}u^5_r}{{\sqrt{u^2+u^2_r}}^5}\right)z_r
 \\
    \nonumber &\quad\,
 +\frac{n-1}{r} \cdot \frac{u^{p+2}}{{\sqrt{u^2+u^2_r}}^3}z_r
  +(p-1)\frac{u^{p+1}u_r}{{\sqrt{u^2+u^2_r}}^5}(2u^2-u^2_r)z_r
    -\chi\frac{u^{q-1}v_r}{\sqrt{1+v^2_r}}z_r
    \\
    \nonumber &\quad\,
    +(p-1)z^2
          +\chi\frac{u^q}{{\sqrt{1+v^2_r}}^3}z
          +(p-q)\chi\frac{u^{q-1}}{{\sqrt{1+v^2_r}}^3}
          \left(\mu-u+\frac{n-1}{r}v^3_r\right)z
    \\
    \nonumber &\quad\,
         + (pq-2q-1)\chi\frac{u^{q-2}u_rv_r}{\sqrt{1+v^2_r}}z
    +\chi\frac{u^{p+q-2}u^2_r}
    {\sqrt{u^2+u^2_r}{\sqrt{1+v^2_r}}^3}
             -{\chi}^2\frac{u^{2q-2}u_rv_r}
    {(1+v^2_r)^2}    
         \\
    \nonumber &\quad\,
     -3\chi\frac{u^{p+q-1}(\mu-u)u_rv_r}
    {\sqrt{u^2+u^2_r}{\sqrt{1+v^2_r}}^5}
               +3{\chi}^2\frac{u^{2q-1}(\mu-u)v^2_r}
    {(1+v^2_r)^3}
-3{\chi}^2\frac{n-1}{r}\cdot\frac{u^{2q-1}v^3_r}
     {(1+v^2_r)^3}
      \\
    \nonumber &\quad\,
     +3\chi\frac{n-1}{r}\cdot\frac{u^{p+q-1}u_rv^2_r}
    {\sqrt{u^2+u^2_r}{\sqrt{1+v^2_r}}^5}
\end{align*}
holds.
\end{proof}
%
%
%
%
Thanks to Corollary \ref{ur;esti;z}, 
we can estimate the right-hand side of \eqref{z_t}.
\begin{lem}\label{zt.est}
Assume that $\tmax<\infty$, 
  but that $\sup_{(r,t)\in(0,R)\times(0, \tmax)}u(r,t)<\infty.$
  Then there exist a constant $d>0$ and continuous functions 
  $b_1$, $b_{21}$, $b_{22}$ and $b_3$ on $[0,R]\times[0,\tmax)$ 
  with properties such that $b_1$ and $b_{22}$ are nonnegative 
  and $z=\frac{u_t}{u}$ satisfies 
  \begin{align}\label{ztlem.est}
  z_t&\leq b_1(r,t)z_{rr}+b_{21}(r,t)z_r+\frac{b_{22}(r,t)}{r}z_r
   \\
    \nonumber &\quad\,
+(p-1)z^2+b_3(r,t)z
+d\left(1+{\|z_+\|}_{L^{\infty}((0,R)\times(t_0,t))}\right)
  \end{align}
   for all $ r \in (0, R)$ and all $t \in (t_0, \tmax)$ 
   and for each $t_0\in(0,\tmax)$. 
\end{lem}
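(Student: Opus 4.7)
The plan is to start from the pointwise identity
\[
 z_t = B_1 z_{rr} + B_{21} z_r + \tfrac{B_{22}}{r} z_r + (p-1)z^2 + B_3 z + B_4
\]
given by Lemma \ref{lem5.4} and simply take $b_1:=B_1$, $b_{21}:=B_{21}$, $b_{22}:=B_{22}$, $b_3:=B_3$, so that the task reduces to (i) checking the claimed regularity/sign properties of the $b_i$ and (ii) showing that $B_4$ admits a uniform bound of the form $d(1+\|z_+\|_{L^\infty((0,R)\times(t_0,t))})$. Continuity of each $B_i$ on $[0,R]\times[0,\tmax)$ follows from the fact that $u\in C^{2,1}$, $v\in C^{2,0}$ together with the explicit expansions $v_r(r,t)=\tfrac{\mu r}{n}-r^{1-n}\!\int_0^r\rho^{n-1}u\,d\rho$ from Lemma \ref{lem2.4}, which exhibits $v_r$ and $v_r/r$ (and hence $v_r^3/r$) as continuous across $r=0$; likewise $u_r(0,t)=0$ by radial symmetry ensures continuity of the $u_r v_r/u$ term in $B_3$. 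Nonnegativity of $b_1$ and $b_{22}$ is immediate from their explicit form.

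The substantial step is the pointwise bound on $B_4$. I would treat its six summands separately, using at each step the three a priori bounds now available: the upper bound $u\le c_1$ from the hypothesis $\sup u<\infty$, the lower bound $u\ge c_3$ provided by Lemma \ref{lem;rulingout-extinction} (to neutralize negative powers of $u$ in terms where $q<2$ or $p<2$), and the two-sided control $|v_r|\le c_4 r$, $|v_{rr}|\le c_5$ from Lemma \ref{lem2.5}. The trivial inequalities $\tfrac{|u_r|}{\sqrt{u^2+u_r^2}}\le 1$ and $\tfrac{u_r^2}{\sqrt{u^2+u_r^2}}\le |u_r|$ will dispose of the three-halves powers in the denominators, while the $1/r$-factors will be absorbed by rewriting $v_r^2/r=v_r\cdot(v_r/r)$ and $v_r^3/r=v_r^2\cdot(v_r/r)$ and invoking $|v_r|/r\le \tfrac{\|u\|_{L^\infty}}{n}$ from Lemma \ref{lem2.5}. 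Four of the six terms of $B_4$ will therefore be bounded by a pure constant, and the remaining two (the $u^{p+q-2}u_r^2$ term and the $u^{2q-2}u_r v_r$ term) by a constant multiple of $|u_r|$.

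At this point the key input is Corollary \ref{ur;esti;z}, which for any $t_0>0$ gives
\[
 |u_r(r,t)| \le C\bigl(1+\|z_+\|_{L^\infty((0,R)\times(t_0,t))}\bigr)
 \qquad (r\in(0,R),\ t\in(t_0,\tmax)).
\]
Plugging this into the two $|u_r|$-dependent terms and combining with the other constant bounds yields $B_4\le d(1+\|z_+\|_{L^\infty((0,R)\times(t_0,t))})$ with a single constant $d$, which together with the identity from Lemma \ref{lem5.4} gives precisely \eqref{ztlem.est}.

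I expect the only real difficulty to lie in dispatching the $1/r$-singularities uniformly up to $r=0$; the recipe $v_r^k/r=v_r^{k-1}\cdot(v_r/r)$ combined with the sharp estimates of Lemma \ref{lem2.5} is what makes this work, and it also underlies the continuity of $b_3$ at the origin. Every other ingredient is a routine pointwise manipulation using the boundedness of $u$, $1/u$, $v_r$, $v_{rr}$ and the crude inequalities for the ratios involving $\sqrt{u^2+u_r^2}$.
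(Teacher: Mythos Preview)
Your proposal is correct and follows essentially the same route as the paper: set $b_i:=B_i$, note their continuity and the nonnegativity of $b_1,b_{22}$, bound four of the six summands in $B_4$ by absolute constants using $u\le c_1$ and $|v_r|\le c_2 r$, bound the remaining two by a constant multiple of $|u_r|$, and then invoke Corollary~\ref{ur;esti;z} to convert $|u_r|$ into $C(1+\|z_+\|_{L^\infty((0,R)\times(t_0,t))})$. Two minor remarks: since $p,q\ge 1$ all powers of $u$ occurring in $B_4$ are nonnegative, so the lower bound $u\ge c_3$ from Lemma~\ref{lem;rulingout-extinction} is not actually needed here; and $v_{rr}$ does not appear in $B_4$, so that bound is superfluous for this step (the paper also lists it but does not use it).
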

\begin{proof}
 We let 
 \begin{align}\label{Bseries}
 b_1:=B_1,\quad b_{21}:=B_{21},\quad b_{22}:=B_{22}\quad \mbox{and}\quad b_3:=B_3, 
 \end{align}
 where $B_1$, $B_{21}$, $B_{22}$ and $B_3$ 
 are defined in Lemma \ref{lem5.4}. 
 We note that they are continuous in $[0,R]\times[0,\tmax)$, and that $b_1\geq 0$ and  $b_{22}\geq 0$. 
To attain the conclusion we 
will give an estimate for $B_4$ 
defined in Lemma \ref{lem5.4}. Now we 
 again use the condition for $u$ and Lemma \ref{lem2.5} to find constants $c_1, c_2, c_3>0$ such that 
 \begin{align*}
 u(r,t)\leq c_1,\quad
|v_r(r,t)|\leq c_2r\quad\mbox{and}\quad
|v_{rr}(r,t)|\leq c_3
 \end{align*}
 for all $ r \in (0, R)$ and all $t \in (0, \tmax)$. 
 Then we can 
 estimate the first, second, fifth and sixth terms in $B_4$ 
 (see \eqref{B}) as 
 \begin{align}\label{B41}
 -3\chi\frac{u^{p+q-1}(\mu-u)u_rv_r}
    {\sqrt{u^2+u^2_r}{\sqrt{1+v^2_r}}^5}
    \leq 3\chi c^{p+q-1}_1(\mu+c_1)\cdot c_2R
    \end{align}
    and
    \begin{align}\label{B42}
    3{\chi}^2\frac{u^{2q-1}(\mu-u)v^2_r}{(1+v^2_r)^3}
    \leq 3{\chi}^2 c^{2q-1}_1(\mu+c_1)\cdot c^2_2R^2
    \end{align}
    as well as
       \begin{align}\label{B43}
     3\chi\frac{n-1}{r}\cdot\frac{u^{p+q-1}u_rv^2_r}
    {\sqrt{u^2+u^2_r}{\sqrt{1+v^2_r}}^5}
    \leq 3(n-1)\chi c^{p+q-1}_1\cdot c^2_2R
    \end{align}
    and
     \begin{align}\label{B44}
  -3{\chi}^2\frac{n-1}{r}\cdot\frac{u^{2q-1}v^3_r}{(1+v^2_r)^3}
   \leq 3(n-1){\chi}^2 c^{2q-1}_1\cdot c^3_2R^2
    \end{align}
  for all $ r \in (0, R)$ and all $t \in (0, \tmax)$. 
  In the third and fourth terms in $B_4$ (see \eqref{B}), 
  we have estimates such that  
      \begin{align}\label{B45}
    \chi\frac{u^{p+q-2}u^2_r} {\sqrt{u^2+u^2_r}{\sqrt{1+v^2_r}}^3}
    \leq \chi c^{p+q-2}_1|u_r|
    \qquad 
    \mbox{and}
    \qquad
        -{\chi}^2\frac{u^{2q-1}v^3_r}{(1+v^2_r)^3}
         \leq {\chi}^2c^{2q-1}_1|u_r|
      \end{align}
    for all $ r \in (0, R)$ and all $t \in (0, \tmax)$.  
    From \eqref{B41}--\eqref{B45} we obtain that 
\begin{align}\label{B46}
B_4(r,t)&\leq 
     3\chi c^{p+q-1}_1(\mu+c_1)\cdot c_2R
+3{\chi}^2 c^{2q-1}_1(\mu+c_1)\cdot c^2_2R^2
 \\
    \nonumber &\quad\,
    +3(n-1)\chi c^{p+q-1}_1\cdot c^2_2R
 +3(n-1){\chi}^2 c^{2q-1}_1\cdot c^3_2R^2   
 \\
    \nonumber &\quad\,
   +\left(\chi c^{p+q-2}_1+ q {\chi}^2c^{2q-1}_1\right)|u_r|.   
\end{align}
   Here thanks to Corollary \ref{ur;esti;z}, 
   we can find a constant $c_4>0$ satisfying
      \begin{align*}
      |u_r(r,t)|\leq c_4\left(1+{\|z_+\|}_{L^{\infty}((0,R)\times(t_0,t))}\right)
      \end{align*}
      for all $ r \in (0, R)$ and all $t \in (t_0, \tmax)$, which together 
      with \eqref{B46} implies that 
     \begin{align*} 
 B_4(r,t)&\leq 
     c_5+c_6\left(1+{\|z_+\|}_{L^{\infty}((0,R)\times(t_0,t))}\right)   
\end{align*}
with some $c_5, c_6>0$. Therefore we see from \eqref{z_t} and \eqref{Bseries} that \eqref{ztlem.est} holds with 
$d:=c_5+c_6$.
  
\end{proof}
\subsection{Boundedness of\ $z$\ from above}
%
%
In order to estimate the term $z_+$, we introduce the following 
function.
\begin{lem}\label{lemma5.6}
Let $C_1$, $C_2$, $C_3$ and $C_4$ be positive constants 
and satisfy that $\frac{C^2_3-4C_2C_4}{4C^2_2}>0$. 
Assume $M>\sqrt{\widetilde{C}}$, 
with $\widetilde{C}:=\frac{C^2_3-4C_2C_4}{4C^2_2}$.
Then the function defined as
\begin{align*}
g(t):=\frac{2\sqrt{\widetilde{C}}}
{1-De^{-\frac{2C_2\sqrt{\widetilde{C}}}{C_1}t}}
-\frac{C_3}{2C_3}-\sqrt{\widetilde{C}}
\end{align*}
with
\[ 
D:=\frac{M+\frac{C_3}{2C_2}-\sqrt{\widetilde{C}}}
               {M+\frac{C_3}{2C_2}+\sqrt{\widetilde{C}}}
\] 
satisfies 
\begin{align}\label{g'defeq}
C_1g'+C_2g^2+C_3g+C_4=0 
\end{align}
for all $t\geq 0$, and moreover
\begin{align*}
g(t_1)=0, \quad\mbox{where}\quad 
t_1:=\frac{C_1}{2C_2\sqrt{\widetilde{C}}}\log
\frac{D\left(\frac{C_3}{2C_2}+\sqrt{\widetilde{C}}\right)}
{\frac{C_3}{2C_2}-\sqrt{\widetilde{C}}}.
\end{align*}
\end{lem}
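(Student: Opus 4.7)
The plan is to recognize the identity \eqref{g'defeq} as a Riccati ordinary differential equation with constant coefficients and to integrate it explicitly via a standard change of variables, then to read off the zero $t_1$. First I would make the shift $h(t):=g(t)+\frac{C_3}{2C_2}$ to eliminate the linear term; a direct computation gives
$$
C_2 g^2 + C_3 g + C_4 = C_2 h^2 + C_4 - \frac{C_3^2}{4C_2} = C_2 h^2 - C_2 \widetilde{C},
$$
so that \eqref{g'defeq} reduces to the autonomous separable equation $C_1 h' = C_2(\widetilde{C}-h^2)$. Here the hypothesis $\widetilde{C}>0$ is essential, since only then does the right-hand side admit the real factorization $C_2(\sqrt{\widetilde{C}}-h)(\sqrt{\widetilde{C}}+h)$.

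Next I would separate variables and use the partial fraction decomposition
$$
\frac{2\sqrt{\widetilde{C}}}{\widetilde{C}-h^2}=\frac{1}{\sqrt{\widetilde{C}}-h}+\frac{1}{\sqrt{\widetilde{C}}+h}
$$
to integrate, obtaining the implicit relation
$$
\log\left|\frac{\sqrt{\widetilde{C}}+h}{\sqrt{\widetilde{C}}-h}\right|=\frac{2C_2\sqrt{\widetilde{C}}}{C_1}\,t+\mathrm{const}.
$$
Solving for $h$ and using $g(0)=M$ to fix the integration constant gives precisely the asserted value of $D$. Applying the algebraic identity
$$
\frac{1+De^{-\alpha t}}{1-De^{-\alpha t}}=\frac{2}{1-De^{-\alpha t}}-1
\quad\text{with}\quad \alpha:=\frac{2C_2\sqrt{\widetilde{C}}}{C_1}
$$
and then undoing the shift $g=h-\frac{C_3}{2C_2}$ reproduces the closed form claimed for $g(t)$. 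In principle the whole statement can alternatively be checked by plugging the formula directly into \eqref{g'defeq} and computing $C_1 g'+C_2 g^2+C_3 g+C_4$, but the derivation above is more transparent. The value of $t_1$ is then read off from $g(t_1)=0$, which by elementary rearrangement is equivalent to
$$
De^{-\alpha t_1}=\frac{\tfrac{C_3}{2C_2}-\sqrt{\widetilde{C}}}{\tfrac{C_3}{2C_2}+\sqrt{\widetilde{C}}},
$$
after which taking logarithms yields the stated expression for $t_1$.

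The only subtle point, and the part where the hypothesis $M>\sqrt{\widetilde{C}}$ is actually used, is to ensure that $g(t)$ stays finite on $[0,\infty)$, i.e.\ that the denominator $1-De^{-\alpha t}$ never vanishes. The assumption $M>\sqrt{\widetilde{C}}$ forces $0<D<1$, whence $1-De^{-\alpha t}\ge 1-D>0$ uniformly for $t\ge 0$. Moreover $\tfrac{C_3}{2C_2}>\sqrt{\widetilde{C}}$, which follows automatically from $C_4>0$ via $\big(\tfrac{C_3}{2C_2}\big)^2-\widetilde{C}=\tfrac{C_4}{C_2}>0$, guarantees that both the numerator and the denominator inside the logarithm in $t_1$ are positive and that the ratio exceeds $1$, so that $t_1$ is finite and strictly positive. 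With these positivity checks in hand the proof reduces to the routine verification outlined above.
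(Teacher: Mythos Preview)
Your proposal is correct and takes essentially the same approach as the paper, whose proof consists of the single sentence ``Straightforward calculations lead to the conclusion of this lemma.'' You have simply supplied those calculations in full detail, including the useful positivity checks ($0<D<1$ from $M>\sqrt{\widetilde{C}}$, and $\tfrac{C_3}{2C_2}>\sqrt{\widetilde{C}}$ from $C_4>0$) that guarantee $g$ is defined on $[0,\infty)$ and $t_1>0$.
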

\begin{proof}
Straightforward calculations 
lead to the conclusion of this lemma.
\end{proof} 
%
%
%
Now we show boundedness of $z$ from above. 
In the case that $p,q\geq 1$, 
the inequality for $z_t$  includes 
$(p-1)z^2$ and $(pq-2q-1)\chi\frac{u^{q-1}u_rv_r}{u\sqrt{1+v^2_r}}$ 
which do not exist in case $p=q=1$ (see \eqref{ztlem.est}). 
The function $g$ introduced in Lemma \ref{lemma5.6} 
enables us to control these new terms.
\begin{lem}
Assume that $\tmax<\infty$, 
  but that $\sup_{(r,t)\in(0,R)\times(0, \tmax)}u(r,t)<\infty.$ Then
   there exists a constant $C>0$ such that 
   $z=\frac{u_t}{u}$ 
   satisfies 
   \begin{align*}
   z(r,t)\leq C
   \end{align*}
 for all $r \in (0,R)$ and all $t \in (\tmax-\ep, \tmax)$.
\end{lem}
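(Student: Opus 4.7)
The plan is to construct a spatially constant supersolution for the differential inequality from Lemma~\ref{zt.est} and invoke the parabolic comparison principle, in the same spirit as Section~\ref{Sec4} and Lemma~\ref{lem;ur;boundary}. First, using boundedness of $u$ together with Lemma~\ref{lem;rulingout-extinction} and Lemma~\ref{lem2.5}, I would obtain uniform bounds on $u$, $1/u$, $|v_r|$ and $|v_{rr}|$ on $(0,R) \times (0,\tmax)$, yielding a constant $B > 0$ with $|b_3(r,t)| \leq B$. The inequality from Lemma~\ref{zt.est} then reads
\begin{align*}
z_t \leq b_1 z_{rr} + b_{21} z_r + \tfrac{b_{22}}{r}z_r + (p-1) z^2 + B\,|z| + d\bigl(1 + \|z_+\|_{L^\infty((0,R)\times(t_0,t))}\bigr),
\end{align*}
with $b_1, b_{22} \geq 0$, for any fixed $t_0 \in (0,\tmax)$.

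Second, I would argue by contradiction, assuming that $M(T) := \|z_+\|_{L^\infty((0,R)\times(t_0,T))} \to \infty$ as $T \nearrow \tmax$; continuity of $z$ guarantees that $M(\cdot)$ is finite and nondecreasing on $[t_0,\tmax)$. Following the template of Lemma~\ref{lem;ur;boundary}, I would fix $\widetilde{C_2} > p-1$ and $\widetilde{C_3} > B$, and then pick $\widetilde{C_4} > d(1+M(T))$ so that the discriminant condition $4\widetilde{C_2}\widetilde{C_4} - \widetilde{C_3}^{\,2} < 0$ required by Lemma~\ref{lemma5.6} holds; the latter forces $\widetilde{C_3}$ to be taken large enough relative to $M(T)$. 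Applying Lemma~\ref{lemma5.6} with these constants and choosing the appropriate tangent branch, I would obtain a function $g(t)$ that is positive, bounded, and (after the sign adjustment used in Lemma~\ref{lem;ur;boundary}) satisfies $g'(t) \geq \widetilde{C_2}\,g(t)^2 + \widetilde{C_3}\,g(t) + \widetilde{C_4}$ on a subinterval $[\tmax-\ep,\tmax]$ with $\ep > 0$ small, together with $g(\tmax-\ep) \geq \sup_{r\in (0,R)} z(r,\tmax-\ep)$ (finite by continuity).

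Third, because $g$ is independent of $r$, one has $g_r = g_{rr} = 0$, and using $b_1, b_{22} \geq 0$ the spatial terms drop out, so the choice of $\widetilde{C_2}, \widetilde{C_3}, \widetilde{C_4}$ forces
\begin{align*}
g'(t) \geq (p-1)g(t)^2 + b_3(r,t)\,g(t) + d\bigl(1 + \|z_+\|_{L^\infty((0,R)\times(t_0,t))}\bigr)
\end{align*}
on $(0,R) \times [\tmax-\ep,\tmax)$. The boundary behaviour $z_r(0,t)=z_r(R,t)=0$ (consequence of $u_r(0,t)=u_r(R,t)=0$ in radial symmetry, transferred through the definition $z = u_t/u$) and the initial ordering at $t = \tmax-\ep$ give the hypotheses of the parabolic comparison principle, so $z(r,t) \leq g(t) \leq \max_{[\tmax-\ep,\tmax]} g =: C$ on $(0,R) \times [\tmax-\ep,\tmax)$, contradicting $M(T) \to \infty$.

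The main obstacle is the self-referential nature of the nonlocal term $\|z_+\|_{L^\infty((0,R)\times(t_0,t))}$ inside the inequality for $z_t$: a large $M(T)$ demands a correspondingly large $\widetilde{C_3}$ to preserve the discriminant condition in Lemma~\ref{lemma5.6}, and in general this shrinks the interval on which the tangent-type $g$ remains a usable supersolution. To resolve this one must bootstrap: first fix $\ep$ using continuity of $z$ to bound $\sup_{[t_0,\tmax-\ep]} z$, and only then deploy Lemma~\ref{lemma5.6} on the remaining short interval $[\tmax-\ep,\tmax]$ with parameters chosen so that the lifetime of $g$ strictly exceeds $\tmax$. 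Controlling this bootstrap quantitatively, analogously to the $\alpha_0, \alpha_1$ selection in Lemmas~\ref{lem4.1} and~\ref{lem;ur;boundary}, is the delicate part of the argument.
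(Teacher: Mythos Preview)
Your proposal has two genuine gaps that the paper's argument avoids by taking a structurally different route.

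First, the claim that $|b_3(r,t)|\le B$ uniformly is not justified. From \eqref{B} and \eqref{Bseries}, $b_3=B_3$ contains the term $(pq-2q-1)\chi\,\dfrac{u^{q-1}u_r v_r}{u\sqrt{1+v_r^2}}$, which involves $u_r$. Under the standing hypotheses you only control $|u_r|$ through Corollary~\ref{ur;esti;z}, i.e.\ by $C(1+\|z_+\|)$, so your bound on $b_3$ already depends on the very quantity you are trying to estimate. This feeds straight into the circularity you flag at the end: now \emph{both} $\widetilde{C_3}$ and $\widetilde{C_4}$ must grow with $M(T)$, which drives the blow-up time of any Riccati supersolution $g$ with $g'\ge \widetilde{C_2}g^2+\widetilde{C_3}g+\widetilde{C_4}$ to zero as $M(T)\to\infty$. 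The bootstrap you sketch (fix $\ep$, bound $z$ on $[t_0,\tmax-\ep]$ by continuity, then cover $[\tmax-\ep,\tmax]$) does not close: the initial value $g(\tmax-\ep)$ must dominate $\sup z(\cdot,\tmax-\ep)$, and if $z$ really blows up this initial value forces the lifespan of $g$ below $\ep$. No choice of $\alpha_0,\alpha_1$ in the style of Lemmas~\ref{lem4.1} or~\ref{lem;ur;boundary} rescues this, because there the comparison function was built from \emph{fixed} coefficients, whereas here the coefficients themselves depend on the unknown supremum.

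The paper circumvents both problems by \emph{not} seeking a supersolution for $z$ directly. Instead it introduces the multiplicative change of variable $\varphi(r,t):=G(t)\,z(r,t)-dt$ with a \emph{decreasing} weight $G$ solving the Riccati equation of Lemma~\ref{lemma5.6}, and argues by contradiction at an interior maximum of $\varphi$. The point of the weight is inequality~\eqref{G^2esti}: because $G$ is nonincreasing, $G^2(t_0)\|z_+\|_{L^\infty((0,R)\times(t_0-\ep,t_0))}\le G(t_0)\bigl(\varphi(r_0,t_0)+dt_0\bigr)$, which converts the nonlocal term into a \emph{linear} expression in $G(t_0)$ with a coefficient determined by the (finite) maximum value $\varphi(r_0,t_0)$. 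The same trick, combined with Corollary~\ref{ur;esti;z}, handles the $u_r$-dependent piece of $b_3$ via \eqref{urG}. After this reduction the inequality at $(r_0,t_0)$ becomes $0\le C_1G_t+C_2G^2+C_3G+C_4$ with constants independent of $M(T)$, and the Riccati identity \eqref{g'defeq} yields the contradiction. The essential idea you are missing is this multiplicative absorption of the nonlocal term; a spatially constant additive supersolution cannot achieve it.
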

\begin{proof}
We use our condition for $u$ and recall Lemma 3.1 
to pick $c_1>0$ and $c_2>0$ fulfilling
\begin{align}\label{c2uc1}
c_2\leq u(r,t) \leq c_1\qquad
\end{align}
 for all $r \in (0,R)$ and all $t \in (0, \tmax)$,
and apply Lemma 2.5 to find $c_3>0$ such that
\begin{align}\label{vrc3}
|v_r(r,t)|\leq c_3r \qquad
 \end{align}
 for all $r \in (0,R)$ and all $t \in (0, \tmax)$.
Let $M>\sqrt{\widetilde{C}}$ 
with $\widetilde{C}:=\frac{C^2_3-4C_2C_4}{4C^2_2}$ and 
 let  $b_1$, $b_{21}$, $b_{22}$, $b_3$ and $d$ be in Lemma 5.5. 
 Using the function $g$ which is provided by Lemma 5.6, 
 we introduce  
\begin{align*}
\varphi(r,t):=G(t)z(r,t)-dt
\end{align*}
 for $r \in (0,R)$
    and $t \in (0, \tmax)$,
where 
\[G(t):=g\left(t-(n-1)t_1\right)\] 
for all 
$(n-1)t_1<t\leq nt_1$ and all $n\in \mathbb{N}$.
Then, according to Lemma \ref{zt.est}, we have that 
\begin{align}\label{phi-t}
\varphi_t&=G(t) z_t+G_t(t)z-d
 \\ 
   \nonumber
          &\leq G(t) \left(
                    b_1(r,t)z_{rr}+b_{21}(r,t)z_r+\frac{b_{22}(r,t)}{r}z_r
                    +(p-1)z^2+b_3(r,t)z
                        \right)
  \\
    \nonumber &\quad\,
 +dG(t){\|z_+\|}_{L^{\infty}((0,R)\times(t-\ep,t))}+dG(t)+G_t(t)z-d
 \\
    \nonumber
 &= b_1(r,t){\varphi}_{rr}+b_{21}(r,t){\varphi}_r
    +\frac{b_{22}(r,t)}{r}{\varphi}_r
    +\frac{p-1}{G(t)}\cdot(\varphi+dt)^2
  \\
    \nonumber &\quad\,
 +b_3(r,t)(\varphi+dt)
 +dG(t){\|z_+\|}_{L^{\infty}((0,R)\times(t-\ep,t))}+dG(t)
 \\
    \nonumber &\quad\,
 +\frac{G_t(t)}{G(t)}(\varphi+dt)-d
\end{align}
 for all $ r \in (0, R)$ and all $t \in (\tmax-\ep, \tmax)$, and since 
 \[z_r=\left(\frac{u_t}{u}\right)_r=\frac{u_{rt}}{u}-\frac{u_ru_t}{u^2}
 \]
 in $[0,R]\times[0,\tmax)$, the fact that 
 \[u_r(0,t)=u_r(R,t)=0\]
 for all $t\in(\tmax-\ep, \tmax)$ entails that 
 \begin{align}
 \varphi_r(0,t)=
 \varphi_r(R,t)\quad\mbox{for all}\ t\in (\tmax-\ep, \tmax).
 \end{align}
 Here, in order to attain this lemma, we shall show that 
 \[ \varphi(r,t)\leq \|\varphi_+(\cdot,\tmax-\ep)\|_{L^\infty(0,R)}\] 
 by using a contradiction argument. 
 Now, if for some $ T\in (\tmax-\ep, \tmax)$, the value 
 \begin{align*}
 S:=\sup_{(r,t)\in(0,R)\times(\tmax-\ep, T)}\varphi(r,t)<\infty
 \end{align*}
 is positive and is
  attained at some point $ (r_0,t_0)\in[0,R]\times[\tmax-\ep,T]$ 
 with $t_0>\tmax-\ep$, then necessarily
 \begin{align}\label{phi-assumption}
 \varphi_t(r_0,t_0)\geq 0,\quad  \varphi_r(r_0,t_0)=0 \quad \mbox{and}\quad 
 \varphi_{rr}(r_0,t_0)\leq 0.
 \end{align}
 Here, since the case that 
 $t_0=t_1:=\frac{C_1}{2C_2\sqrt{\widetilde{C}}}\log
\frac{D\left(\frac{C_3}{2C_2}+\sqrt{\widetilde{C}}\right)}
{\frac{C_3}{2C_2}-\sqrt{\widetilde{C}}}$ implies 
\[\varphi(r_0,t_0)<0,\]
it is enough to consider the case that 
 $t_0 \neq t_1$.  Thus using \eqref{phi-t} and 
 \eqref{phi-assumption} entails that  
 \begin{align}\label{phi-r0t0}
 0&\leq \varphi_t(r_0,t_0)
 \\   \nonumber
   &\leq  \frac{p-1}{G(t_0)}\cdot(\varphi(r_0,t_0)+dt_0)^2
  + b_3(r_0,t_0)(\varphi(r_0,t_0)+dt_0)
  \\
    \nonumber &\quad\, 
 +dG(t_0){\|z_+\|}_{L^{\infty}((0,R)\times(t_0-\ep,t_0))}+dG(t_0) 
 +\frac{G_t(t_0)}{G(t_0)}(\varphi(r_0,t_0)+dt_0)-d 
 \\   \nonumber 
 &= \frac{1}{G(t_0)}
 (\varphi(r_0,t_0)+dt_0)G_t(t_0)
 + \frac{1}{G(t_0)}dG^2(t_0)
  \\
    \nonumber &\quad\, 
 + \frac{1}{G(t_0)}d{\|z_+\|}_{L^{\infty}((0,R)\times(t_0-\ep,t_0))}G^2(t_0)
 \\
    \nonumber &\quad\, 
 + \frac{1}{G(t_0)}\left(b_3(r_0,t_0)(\varphi(r_0,t_0)+dt_0)-d\right)G(t_0)
 \\
    \nonumber &\quad\, 
 + \frac{1}{G(t_0)}(p-1)(\varphi(r_0,t_0)+dt_0)^2.
 \end{align}
 When the special case $r=0$ holds, by picking a sequence 
 $(r_j)_{j\in \mathbb{N}}\subset (0,R)$ such that 
\[r_j\searrow 0\quad \mbox{as} \  
  j\to \infty\] 
and
\[\varphi_r(r_j,t_0)\leq 0 \quad  \mbox{for all} \ j\in \mathbb{N},\] 
 according to the proof of \cite[Lemma 5.6]{B-W}, it is enough to 
 deal with \eqref{phi-r0t0}.
 Now we shall estimate the first, third and fourth 
  terms on the right-hand side of \eqref{phi-r0t0}.
Since 
\[G_t=g'<0\] 
holds, there exists a constant $c_5>0$ such that 
 \begin{align}\label{G_testi}
  (\varphi(r_0,t_0)+dt_0)G_t(t_0)\geq c_5G_t(t_0).
 \end{align}
 Next 
 we 
obtain that 
 \begin{align}\label{G^2esti}
 {\|z_+\|}_{L^{\infty}((0,R)\times(t_0-\ep,t_0))}G^2(t_0)
    &= G^2(t_0)\sup_{(r,t)\in(0,R)\times(t_0-\ep,t_0)}
    \left\{\frac{\varphi(r,s)+ds}{G(s)}\right\}
     \\     \nonumber
    &\leq G(t_0)\left\{
    \sup_{(r,t)\in(0,R)\times(t_0-\ep,t_0)}\varphi_+(r,s)+dt_0\right\}
     \\     \nonumber
    &=c_6G(t_0),
 \end{align}
with $c_6:=\left(\varphi(r_0,t_0)+dt_0\right)$. 
Recalling the definition of $b_3$ and using the estimates for 
\eqref{c2uc1} and \eqref{vrc3}, we infer that 
\begin{align}\label{G^1esti}
&\left(b_3(r_0,t_0)(\varphi(r_0,t_0)+dt_0)-d\right)G(t_0)
   \\   \nonumber
 &\leq (c_6-d)\left\{\chi\frac{u^q}{{\sqrt{1+v^2_r}}^3}
          +(p-q)\chi\frac{u^{q-1}}{{\sqrt{1+v^2_r}}^3}
          \left(\mu-u+\frac{n-1}{r}v^3_r\right)\right\}G(t_0)
    \\
    \nonumber &\quad\,
     + (c_6-d)(pq-2q-1)\chi\frac{u^{q-1}u_rv_r}{u\sqrt{1+v^2_r}}G(t_0)
      \\   \nonumber
 &\leq (c_6-d)\chi\left\{c_1
          +(pc_1^{q-1}+qc_1^q)+(n-1)|p-q|c_3c_1^{q-1}\right\}G(t_0)
    \\
    \nonumber &\quad\,
     + (c_6-d)|pq-2q-1|\cdot\chi\cdot \frac{c_1^{q-2}}{c_2}|u_r|G(t_0).
 \end{align} 
 Thanks to Corollary 5.3 and \eqref{G^2esti}, we moreover estimate 
  the second term on the right-hand side of \eqref{G^1esti}
to see that 
\begin{align}\label{urG}
|u_r|G(t_0)
&\leq C\cdot{\|z_+\|}_{L^{\infty}((0,R)\times(t_0-\ep,t_0))}G(t_0)
\leq C\cdot c_6.
\end{align}
Then we combine \eqref{G^1esti} and \eqref{urG} to obtain 
\begin{align}\label{G^1estilast}
\left(b_3(r_0,t_0)(\varphi(r_0,t_0)+dt_0)-d\right)G(t_0)
\leq c_7G(t_0)+c_8,
\end{align}
with 
\begin{align*}
c_7:=(c_6-d)\chi\left\{c_1
          +(pc_1^{q-1}+qc_1^q)+(n-1)|p-q|c_3c_1^{q-1}\right\}
\end{align*}
 and
\begin{align*}
     c_8:=C c_6(c_6-d)|pq-2q-1|\cdot\chi\cdot \frac{c_1^{q-2}}{c_2 }.
\end{align*} 
 Thus plugging \eqref{G_testi}, \eqref{G^2esti} and  \eqref{G^1estilast} into \eqref{phi-r0t0} together with the definition 
 of $G$ and \eqref{g'defeq} yields 
 \begin{align*}
 0&\leq \varphi_t(r_0,t_0)
 \\   \nonumber
   &< \frac{1}{G(t_0)}\left\{c_5G_t(t_0)+dG^2(t_0)+dc_6G(t_0)
   +c_7G(t_0)+c_8+(p-1)c_6^2
   \right\}
   \\    \nonumber
   &= \frac{1}{G(t_0)}\left(C_1G_t(t_0)+C_2G^2(t_0)+C_3G(t_0)+C_4
   \right)
   \\    \nonumber
   &=0,
 \end{align*}
 with $C_1:=c_5, C_2:=d, C_3:=dc_6+c_7$ and $C_4:=c_8+(p-1)c_6^2$,
 which contradicts.
 Thus this implies that 
 \begin{align*}
 \varphi(r,t)&\leq \|\varphi_+(\cdot,0)\|_{L^\infty(0,R)}
=\|G(0)z_+(\cdot,\tmax-\ep)\|_{L^\infty(0,R)}
 \end{align*}
 for all $r\in (0,R)$ and all $t\in (\tmax-\ep, \tmax)$.
 Therefore, we establish 
 \begin{align*}
 z(r,t)&\leq \frac{G(0)\|z_+(\cdot,\tmax-\ep)\|_{L^\infty(0,R)}+dt}{G(t)}
\\
 &\leq \frac{G(0)\|z_+(\cdot,0)\|_{L^\infty(0,R)}+d\tmax}{G(\tmax)}
 \end{align*}
  for all $r\in (0,R)$ and $t\in (\tmax-\ep, \tmax)$.
  This completes the proof.
\end{proof}
\subsection{Boundedness of\ $u$\ implies extensibility.\,Proof of Theorem\,1.1.}
We have already established two important estimates from 
Corollary 5.3 and Lemma 5.7 such that   
 \begin{align*}
    {\|u_r(\cdot,t)\|}_{L^{\infty}(0,R)}\leq C
    \left(1+{\|z_+\|}_{L^{\infty}((0,R)\times(t_0,t))}\right)
    \end{align*}
 for all $t \in (t_0, \tmax)$, and 
       \begin{align*}
  0\leq z_+(r,t)\leq C
   \end{align*}
 for all $r \in (0,R)$ and all $t \in (0, \tmax)$.
By combining these estimates we can obtain the desired 
 boundedness of $u_r$. 
 Therefore, we only provide the statement of the corollary. 
%
%
\begin{corollary}\label{cor;rulingourfgradientbu}
Assume that $\tmax<\infty$, 
but that $\sup_{(r,t)\in(0,R)\times(0, \tmax)}u(r,t)<\infty$. Then
   there exists a constant $C>0$ such that 
   \begin{align*}
   {\|u_r(\cdot,t)\|}_{L^{\infty}(0,R)}\leq C
   \end{align*}
 for all $t \in (0, \tmax)$.
\end{corollary}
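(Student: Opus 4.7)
The plan is to combine the two main estimates obtained in this subsection, namely Corollary \ref{ur;esti;z} (which bounds $\|u_r(\cdot,t)\|_{L^\infty(0,R)}$ linearly in $\|z_+\|_{L^\infty((0,R)\times(t_0,t))}$) and Lemma 5.7 (which bounds $z$ from above uniformly on an interval of the form $(\tmax-\ep,\tmax)$). Since Lemma 5.7 only gives information near $\tmax$, the argument splits naturally into two time regimes which are patched at the time $t_0:=\tmax-\ep$.

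First, on the closed interval $[0,\tmax-\ep]$, the regularity from Lemma \ref{Lem2.1}, namely $u\in C^{2,1}(\overline{\Omega}\times[0,\tmax))$, makes $(r,t)\mapsto u_r(r,t)$ continuous on the compact set $[0,R]\times[0,\tmax-\ep]$, hence bounded; call the bound $C_0$. This handles the region away from the maximal time trivially, without needing any of the nonlinear machinery developed above.

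Second, on $(\tmax-\ep,\tmax)$, I apply Lemma 5.7 to obtain a constant $C_1>0$ with $z(r,t)\le C_1$ for all $r\in(0,R)$ and $t\in(\tmax-\ep,\tmax)$, which immediately yields $z_+(r,t)\le C_1$ on the same set, and therefore
\[
\|z_+\|_{L^\infty((0,R)\times(t_0,t))}\le C_1\qquad\text{for all }t\in(t_0,\tmax),\quad t_0:=\tmax-\ep.
\]
Invoking Corollary \ref{ur;esti;z} with this choice of $t_0$ then gives a constant $C_2>0$ such that
\[
\|u_r(\cdot,t)\|_{L^\infty(0,R)}\le C_2\bigl(1+C_1\bigr)\qquad\text{for all }t\in(\tmax-\ep,\tmax).
\]

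Taking $C:=\max\{C_0,\,C_2(1+C_1)\}$ yields the claim for all $t\in(0,\tmax)$. There is no real obstacle here: all the heavy lifting (the comparison arguments yielding the lower bound on $u_r$, the $L^\infty$ bound for $u_r$ in terms of $z_+$, and the boundedness of $z$ from above via the tailored comparison function $g$) has already been carried out in Lemmas \ref{lem4.1}, \ref{lem;ur;origin}, \ref{lem;ur;boundary}, Corollary \ref{ur;esti;z} and Lemma 5.7. Accordingly, as the paper itself remarks, only the statement of the corollary is needed, the proof being a direct concatenation of the two displayed estimates on complementary time intervals.
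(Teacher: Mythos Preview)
Your proposal is correct and matches the paper's approach: the paper explicitly says that combining Corollary \ref{ur;esti;z} with Lemma 5.7 yields the result and therefore only states the corollary. Your splitting into the compact region $[0,\tmax-\ep]$ (handled by continuity of $u_r$) and the tail $(\tmax-\ep,\tmax)$ (handled by Lemma 5.7 feeding into Corollary \ref{ur;esti;z} with $t_0=\tmax-\ep$) is exactly the routine concatenation the paper leaves implicit.
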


Now we prove Theorem 1.1. 
%
%
\begin{proof}[{\bf Proof of Theorem 1.1.}]
Thanks to Lemma 2.1, we have already 
known local existence of solutions and 
extensibility criterion 
including extinction and gradient brow-up of solutions. 
Moreover, Lemmas \ref{lem;rulingout-extinction} and 
\ref{cor;rulingourfgradientbu} entail ruling out 
the possibility of extinction and gradient brow-up, 
which implies that 
Theorem \ref{mainthm1} holds.  
\end{proof}
\section{ Boundedness. Proof of Theorem \ref{mainthm2}.}\label{Sec6}
In light of extensibility criterion \eqref{extensibility;criterion}, 
we moreover establish the results not only about global existence 
but also about boundedness of solutions. 
In this section we will prove Theorem 1.2 
through a series of lemmas. 
We first recall the estimate for the term which comes from the 
diffusion term (see \cite[Lemma 6.1]{B-W}).
%
%
\begin{lem}\label{lem6.1}
Let $r\geq1$. Then 
\begin{align*}
\int_{\Omega} u^{r-1}|\nabla u|
\leq\int_{\Omega}\frac{u^{r-1}|\nabla u|^2}{\sqrt{u^2+|\nabla u|^2}} 
+\int_{\Omega} u^r
\end{align*}
for all $t\in(0,\tmax)$.
\end{lem}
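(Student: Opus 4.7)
The plan is to reduce the lemma to a pointwise inequality. Since $u>0$ and $r\geq 1$, the factor $u^{r-1}$ is nonnegative, so it suffices to show that
\[
|\nabla u|\leq \frac{|\nabla u|^2}{\sqrt{u^2+|\nabla u|^2}}+u
\]
holds pointwise in $\Omega\times(0,\tmax)$; multiplying by $u^{r-1}$ and integrating then yields the claim directly.

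To establish this pointwise inequality, I would write
\[
|\nabla u|=\frac{|\nabla u|^2}{\sqrt{u^2+|\nabla u|^2}}
+\frac{|\nabla u|\bigl(\sqrt{u^2+|\nabla u|^2}-|\nabla u|\bigr)}{\sqrt{u^2+|\nabla u|^2}},
\]
and rationalize the difference $\sqrt{u^2+|\nabla u|^2}-|\nabla u|=\dfrac{u^2}{\sqrt{u^2+|\nabla u|^2}+|\nabla u|}$. The second term on the right-hand side then equals
\[
\frac{|\nabla u|\,u^{2}}{\sqrt{u^2+|\nabla u|^2}\,\bigl(\sqrt{u^2+|\nabla u|^2}+|\nabla u|\bigr)}.
\]
Bounding $|\nabla u|\leq\sqrt{u^2+|\nabla u|^2}$ in the numerator and using $\sqrt{u^2+|\nabla u|^2}+|\nabla u|\geq u$ in the denominator gives that this quantity is at most $u$, which is exactly what is required.

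The only genuinely nontrivial point is producing the correct manipulation of $\sqrt{u^2+|\nabla u|^2}-|\nabla u|$; everything else is bookkeeping. Once the pointwise inequality is in hand, one multiplies by $u^{r-1}\geq 0$ and integrates over $\Omega$ to conclude. There is no obstacle worth flagging here, since the positivity of $u$ guaranteed by \eqref{u0} and Lemma \ref{Lem2.1} rules out any worry about the factor $u^{r-1}$ when $r=1$, and the computation is completely algebraic and uniform in $t\in(0,\tmax)$.
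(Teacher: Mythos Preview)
Your proof is correct: the pointwise inequality $|\nabla u|\le \frac{|\nabla u|^2}{\sqrt{u^2+|\nabla u|^2}}+u$ follows exactly as you say, and multiplying by $u^{r-1}\ge 0$ and integrating gives the lemma. The paper does not actually supply its own proof here; it simply recalls the estimate from \cite[Lemma~6.1]{B-W}, so your argument is the natural elementary verification behind that citation.
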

We next have the following important inequality which means that 
the quantity $\int_\Omega u^{m+p+\alpha}$ with 
$\alpha < -1+\frac 1n$ is controlled by 
$\int_\Omega |\nabla u^{m+p-1}|$. 
%
%
\begin{lem}\label{lemma6.2}
Let $m\geq1$, $n\in\mathbb{N}$ and 
$\alpha\in(-m-p,-1+\frac{1}{n})$. 
Then  there exists a constant $C>0$ such that 
\begin{align*}
\int_{\Omega} u^{m+p+\alpha}
\leq\eta\int_{\Omega}|\nabla u^{m+p-1}| 
+ C^m \left(\eta^{\frac{-n(m+p+\alpha-1)}{-n\alpha-n+1}}+1 \right)
\end{align*}
holds for all $\eta>0$. 
\end{lem}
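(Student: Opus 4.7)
The natural strategy is a Gagliardo--Nirenberg interpolation combined with Young's inequality, anchored by the mass-conservation identity $\int_\Omega u(\cdot,t) = \int_\Omega u_0$, which follows at once from integrating the first equation of \eqref{P} over $\Omega$ and invoking the no-flux boundary condition. The point is that $\int_\Omega u$ is the only \emph{a priori} integral information we have for $u$, and every interpolation must be set up so that the "low" reference norm reduces to $\int_\Omega u$.

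Concretely, I would set $w := u^{m+p-1}$ and $s := (m+p+\alpha)/(m+p-1)$, so that $\int_\Omega u^{m+p+\alpha} = \|w\|_{L^s(\Omega)}^s$ and $\int_\Omega|\nabla u^{m+p-1}| = \|\nabla w\|_{L^1(\Omega)}$. With the reference exponent $\rho := 1/(m+p-1)$ one has $\|w\|_{L^\rho(\Omega)}^\rho = \int_\Omega u = \int_\Omega u_0$, hence $\|w\|_{L^\rho(\Omega)} \le c_0^{m+p-1}$ with $c_0$ depending only on $u_0$ and $|\Omega|$. Then apply Gagliardo--Nirenberg in the form
\[
\|w\|_{L^s(\Omega)} \le C_{\rm GN}\Bigl(\|\nabla w\|_{L^1(\Omega)}^{\theta}\|w\|_{L^\rho(\Omega)}^{1-\theta} + \|w\|_{L^\rho(\Omega)}\Bigr),
\]
where $\theta \in [0,1)$ is fixed by the scaling identity $\frac{1}{s} = \theta\cdot\frac{n-1}{n} + (1-\theta)\cdot\frac{1}{\rho}$. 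A direct computation gives
\[
\theta s = \frac{n(m+p+\alpha-1)}{n(m+p-1)-(n-1)}, \qquad 1-\theta s = \frac{-n\alpha-n+1}{n(m+p-1)-(n-1)},
\]
and the hypothesis $\alpha < -1+\frac{1}{n}$ is exactly what makes the numerator $-n\alpha-n+1$ positive, so that $\theta s < 1$. Raising to the $s$-th power and applying Young's inequality with conjugate exponents $1/(\theta s)$ and $1/(1-\theta s)$ yields
\[
\|\nabla w\|_{L^1}^{\theta s}\|w\|_{L^\rho}^{(1-\theta) s} \le \eta\|\nabla w\|_{L^1} + C\,\eta^{-\theta s/(1-\theta s)}\|w\|_{L^\rho}^{(1-\theta)s/(1-\theta s)},
\]
and the exponent of $\eta$ simplifies to $-\theta s/(1-\theta s) = -n(m+p+\alpha-1)/(-n\alpha-n+1)$, exactly as stated. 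Combining this with $\|w\|_{L^\rho(\Omega)} \le c_0^{m+p-1}$ and absorbing the $s$-th power and the Gagliardo--Nirenberg constant into a single factor of the form $C^m$ finishes the proof.

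\textbf{Main obstacle.} Two technical points deserve care. First, the admissible range $\alpha \in (-m-p, -1+\frac{1}{n})$ includes values $\alpha \le 1-m-p$, for which $s \le \rho$ and the Gagliardo--Nirenberg exponent $\theta$ formally degenerates; in this subregime one instead uses H\"older directly ($\|w\|_{L^s}^s \le |\Omega|^{1-s/\rho}\|w\|_{L^\rho}^s$), so that the gradient term is not even needed and the bound reduces to the "$+1$" in the parentheses. A split into the two subcases $s \le \rho$ and $s > \rho$ may therefore be the cleanest presentation. Second, and more delicate, is the uniform tracking of constants so that the factor multiplying $\eta^{-n(m+p+\alpha-1)/(-n\alpha-n+1)} + 1$ is at most $C^m$: one must control $C_{\rm GN}^s$ and the power $\|w\|_{L^\rho}^{(1-\theta)s/(1-\theta s)} \le c_0^{(m+p-1)\cdot(1-\theta)s/(1-\theta s)}$ uniformly in $m$, which amounts to checking that the $m$-dependent exponent $(m+p-1)\cdot(1-\theta)s/(1-\theta s)$ grows only linearly in $m$ -- this can be done by the explicit formulas for $\theta s$ and $1-\theta s$ displayed above.
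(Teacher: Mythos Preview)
Your proposal is essentially the paper's own proof: the paper sets $\theta:=\frac{m+p+\alpha}{m+p-1}$ (your $s$), applies Gagliardo--Nirenberg with low reference exponent $\frac{1}{m+p-1}$ (your $\rho$) and interpolation exponent $a$ (your $\theta$), raises to the $\theta$-th power, uses Young with $a\theta\in(0,1)$, and then mass conservation together with the bound $\theta\le 1+\frac{1}{n}$ to package the constants as $C^m$. Your observation about the subrange $\alpha\le 1-m-p$ (equivalently $s\le\rho$) is a genuine refinement---the paper asserts $a\theta\in(0,1)$ on the full interval $(-m-p,-1+\tfrac{1}{n})$ without comment, whereas in fact $a\theta>0$ needs $\alpha>1-m-p$; your H\"older remedy handles the remaining trivial case, although in the application (Lemma~6.3) one always has $\alpha=-p+q+\varepsilon>1-m-p$ anyway.
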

\begin{proof}
We first note that 
\begin{align}\label{6.2-1}
\int_{\Omega} u^{m+p+\alpha}&=
\int_{\Omega} u^{(m+p-1)\frac{m+p+\alpha}{m+p-1}}\\ \nonumber
&= \|u^{m+p-1}\|_{L^\theta(\Omega)}^{\theta} 
\end{align}
holds with \[\theta:=\frac{m+p+\alpha}{m+p-1}.\] 
Then using the Gagliardo--Nirenberg type 
inequality (see \cite{L-L}) to obtain 
\begin{align*}
\|u^{m+p-1}\|_{L^\theta(\Omega)}
\leq c_1\|\nabla u^{m+p-1}\|^a_{L^1(\Omega)}\cdot
\|u^{m+p-1}\|^{1-a}_{L^\frac{1}{m+p-1}(\Omega)}
+ c_1\|u^{m+p-1}\|_{L^\frac{1}{m+p-1}(\Omega)} 
\end{align*}  
with \[a:=\frac{n(m+p+\alpha-1)(m+p-1)}
{\{n(m+p-1)-(n-1)\}(m+p+\alpha)}\]
and some $c_1>0$, 
by virtue of the elementary inequality 
$(X+Y)^{\theta} \leq 2^{\theta}(X^{\theta}+Y^{\theta})$
 for $X,Y\geq 0$, we infer from \eqref{6.2-1} that 
\begin{align}\label{6.2-2}
\int_{\Omega} u^{m+p+\alpha}&\leq
 (2 c_1)^\theta\left(\|\nabla u^{m+p-1}\|^{a\theta}_{L^1(\Omega)}
\|u^{m+p-1}\|^{(1-a)\theta}_{L^\frac{1}{m+p-1}(\Omega)}
+\|u^{m+p-1}\|^\theta
_{L^\frac{1}{m+p-1}(\Omega)}\right).
\end{align}
Since the condition 
$\alpha\in(-m-p,-1+\frac{1}{n})$ 
implies that  
$a\theta \in (0,1)$, 
the Young inequality entails that 
\begin{align}\label{6.2-Yineq}
&(2c_1)^\theta\|\nabla u^{m+p-1}\|^{a\theta}
_{L^1(\Omega)}\|u^{m+p-1}\|^{(1-a)\theta}_{L^\frac{1}{m+p-1}(\Omega)}
\\
    \nonumber
&\leq a\theta\eta\|\nabla u^{m+p-1}\|_{L^1(\Omega)}
+(1-a\theta)(2c_1)^{\frac{\theta}{1-a\theta}}
\eta^{\frac{-a\theta}{1-a\theta}}\|u^{m+p-1}\|
^{\frac{(1-a)\theta}{1-a\theta}}_{L^\frac{1}{m+p-1}(\Omega)}
\\
    \nonumber
&\leq \eta\|\nabla u^{m+p-1}\|_{L^1(\Omega)}
+(2c_1)^{\frac{\theta}{1-a\theta}}\eta^{\frac{-a\theta}{1-a\theta}}\|u^{m+p-1}\|
^{\frac{(1-a)\theta}{1-a\theta}}_{L^\frac{1}{m+p-1}(\Omega)}.
\end{align}
Thus plugging \eqref{6.2-Yineq} into \eqref{6.2-2} together with the mass conservation low $\int_\Omega u = \int_\Omega u_0$ yields that 
\begin{align}\label{ineq;gagliYo}
\int_{\Omega} u^{m+p+\alpha}
&\leq \eta\|\nabla u^{m+p-1}\|_{L^1(\Omega)}
+(2c_1)^{\frac{\theta}{1-a\theta}}\eta^{\frac{-a\theta}{1-a\theta}}\|u^{m+p-1}\|
^{\frac{(1-a)\theta}{1-a\theta}}_{L^\frac{1}{m+p-1}(\Omega)}
\\
    \nonumber &\quad\,
+(2c_1)^\theta\|u^{m+p-1}\|^\theta
_{L^\frac{1}{m+p-1}(\Omega)}
\\
    \nonumber
&= \eta\int_{\Omega}|\nabla u^{m+p-1}|
\\
    \nonumber &\quad\,
+(2c_1)^{\theta\cdot \frac{n(m+p-2)+1}{-n\alpha-n+1}}\left(\int_{\Omega} u_0\right)
^{\frac{(n-1)\theta-n}{(n-1)+n\alpha}(m+p-1)}
\eta^{\frac{-n(m+p+\alpha-1)}{-n\alpha-n+1}}
\\
    \nonumber &\quad\,
+(2c_1)^\theta\left(\int_{\Omega} u_0 \right)
^{m+p+\alpha}.
\end{align}
Moreover, the facts that 
$\theta< \frac{m+p-1+\frac 1n}{m+p-1} \le 1+\frac 1n$  
and that $-n\alpha -n +1 >
 0$ 
enable us to find some constant $c_2>0$ such that 
\begin{align}\label{ineq;indepm;coef}
(2c_1)^{\theta\cdot \frac{n(m+p-2)+1}{-n\alpha-n+1}}\left(\int_{\Omega} u_0\right)
^{\frac{(n-1)\theta-n}{(n-1)+n\alpha}(m+p-1)}  
\le c_2^m 
\quad \mbox{and} \quad 
(2c_1)^\theta\left(\int_{\Omega} u_0 \right)
^{m+p+\alpha}\le c_2^m. 
\end{align}
Therefore, a combination of \eqref{ineq;gagliYo} with \eqref{ineq;indepm;coef} 
derives this lemma. 
\end{proof}
Thanks to Lemma 6.2, we can attain the following key inequality 
which is useful not only for obtaining 
a differential inequality for
 $\int_\Omega u^m$ for $m\ge 1$ 
but also for showing an $L^\infty$-estimate for $u$ 
via using the Moser iteration argument. 
%
%
\begin{lem}\label{lemma6.3}
Assume that \eqref{pq-Assumption}. 
Then there exist $C_1,C_2,C_3,C_4>0$ such that for all $m\ge 1$,  
\begin{align}\label{lem6.3}
 \frac{d}{dt}\int_{\Omega}u^m&+\int_{\Omega}u^{m}
 +\frac{m(m-1)}{2} \int_{\Omega} u^{m+p-2}|\nabla u|
 \\   
  \nonumber
  &\leq m(m-1)\int_{\Omega} u^{m+p-1}
+C^m_1+C_2 m+C_3m\cdot  C^m_4
\end{align} 
holds on $(0,\tmax)$. 
\end{lem}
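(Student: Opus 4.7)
The plan is to multiply the first equation of \eqref{P} by $mu^{m-1}$ and integrate over $\Omega$, then process the diffusion term via Lemma~\ref{lem6.1} and the chemotaxis via integration by parts combined with the divergence identity from the introduction, and finally invoke Lemma~\ref{lemma6.2} to absorb the resulting lower-order integrals back into the gradient term on the LHS.

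Testing with $mu^{m-1}$ and using the no-flux boundary condition gives
\begin{align*}
\frac{d}{dt}\int_\Omega u^m
=-m(m-1)\int_\Omega \frac{u^{m+p-2}|\nabla u|^2}{\sqrt{u^2+|\nabla u|^2}}
+m(m-1)\chi\int_\Omega \frac{u^{m+q-2}\nabla u\cdot\nabla v}{\sqrt{1+|\nabla v|^2}}.
\end{align*}
Applying Lemma~\ref{lem6.1} with $r=m+p-1$ bounds the diffusion contribution from above by $-m(m-1)\int u^{m+p-2}|\nabla u|+m(m-1)\int u^{m+p-1}$, which produces the full gradient term for the LHS and the target $m(m-1)\int u^{m+p-1}$ on the RHS. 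For the chemotactic term I would integrate by parts once more, using $v_\nu=0$ on $\partial\Omega$ (the compatibility condition $\int_\Omega(\mu-u)=0$ combined with radial symmetry forces $v_r(R,t)=0$), to obtain $-\tfrac{m(m-1)\chi}{m+q-1}\int u^{m+q-1}\nabla\cdot(\nabla v/\sqrt{1+|\nabla v|^2})$. Expanding the divergence via the identity from the introduction, substituting $\Delta v=\mu-u$, and using the radial expression $\nabla v\cdot\nabla(1+|\nabla v|^2)^{-1/2}=-v_r^2v_{rr}/(1+v_r^2)^{3/2}$ together with $v_{rr}=\mu-u-\tfrac{n-1}{r}v_r$, the $(\mu-u)$ contributions partially cancel and the chemotactic integral reduces to
\begin{align*}
\mathrm{Chemo}\le\frac{m(m-1)\chi}{m+q-1}\int_\Omega u^{m+q}
+\frac{(n-1)m(m-1)\chi}{m+q-1}\int_\Omega \frac{u^{m+q-1}|v_r|}{r}.
\end{align*}
The remaining nonlocal factor $|v_r|/r$ is handled by the representation $|v_r|/r\le\tfrac{\mu}{n}+r^{-n}\int_0^r\rho^{n-1}u\,d\rho$ from Lemma~\ref{lem2.4}; Fubini together with the bound $r^{-1}\le\rho^{-1}$ in the resulting double integral reduces it, via a Hardy-type estimate in radial coordinates, to a further sum of $\int u^{m+q}$ and $\int u^{m+q-1}$ with constants depending only on $n,\mu,\chi,R$ and $\int_\Omega u_0$.

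At this point the hypothesis $p>q+1-\tfrac1n$ becomes essential: for $j\in\{0,1\}$, the exponent $m+q-j=m+p+(q-p-j)$ satisfies $q-p-j\in(-m-p,-1+\tfrac1n)$, so Lemma~\ref{lemma6.2} bounds each $\int u^{m+q-j}$ by $\eta\int|\nabla u^{m+p-1}|+C^m(\eta^{-\sigma}+1)$ with $\sigma=\sigma(n,p,q)$ independent of $m$. Choosing $\eta$ of order $1/(m(m+p-1))$ renders the gradient contribution at most $\tfrac{m(m-1)}{4}\int u^{m+p-2}|\nabla u|$, absorbed into the LHS to leave the announced $\tfrac{m(m-1)}{2}\int u^{m+p-2}|\nabla u|$; the remainders condense into constants of the form $C_1^m$ and $mC_3C_4^m$. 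Adding $\int_\Omega u^m$ to both sides and invoking the trivial Young estimate $u^m\le u^{m+p-1}+1$ (valid since $p\ge1$) absorbs the extra copy of $\int u^m$ into $m(m-1)\int u^{m+p-1}$ together with a $C_2m$ constant, completing the proof. The main obstacle is the radial cancellation in the chemotactic step, where one must carefully combine the expressions for $v_r$ and $v_{rr}$ so that every trace of $v_{rr}$ is re-expressed as an integral $\int u^{m+q-j}$, keeping the coefficients independent of $\|u\|_{L^\infty}$ throughout so that the resulting constants genuinely fit into the structure $C_1^m+C_2m+C_3mC_4^m$.
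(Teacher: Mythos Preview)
Your overall architecture matches the paper's: test with $mu^{m-1}$, use Lemma~\ref{lem6.1} for the diffusion, integrate the chemotactic term by parts, exploit the radial identity $\nabla\!\cdot(\nabla v/\sqrt{1+|\nabla v|^2})=(\mu-u)(1+v_r^2)^{-3/2}+(n-1)r^{-1}v_r^3(1+v_r^2)^{-3/2}$, and close with Lemma~\ref{lemma6.2}. The chemotactic reduction you wrote down is correct.

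There is, however, a genuine gap in your handling of the nonlocal piece. After your Fubini step the relevant integral is
\[
\omega_n\!\int_0^R\!\rho^{n-1}u(\rho)\Bigl(\int_\rho^R r^{-1}u^{m+q-1}(r)\,dr\Bigr)d\rho,
\]
and your hint ``use $r^{-1}\le\rho^{-1}$'' produces factors such as $\int_0^R u^{m+q-1}(r)\,dr$ and $\int_0^R\rho^{n-2}u(\rho)\,d\rho$, neither of which carries the $r^{n-1}$ weight needed to identify it with an $\int_\Omega$-integral; for $n\ge 2$ the second one is not controlled by the mass and can blow up at the origin. There is no Hardy inequality that rescues this: if you try instead to apply H\"older with exponent $m+q$ to the inner integral (which is what the paper does), the leftover radial weight after a second H\"older lands exactly at the non-integrable borderline $r^{-1}$. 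The paper fixes this by shifting the second H\"older exponent to $m+q+\varepsilon$ with $\varepsilon\in(0,p-q-1+\tfrac1n)$, which makes the weight integrable and bounds the whole nonlocal term by $\int_\Omega u^{m+q+\varepsilon}=\int_\Omega u^{m+p+\alpha}$ with $\alpha=q-p+\varepsilon$ still satisfying $\alpha<-1+\tfrac1n$; Lemma~\ref{lemma6.2} then applies as you intended.

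A smaller point: your final step $u^m\le u^{m+p-1}+1$ overshoots the coefficient, producing $(m(m-1)+1)\int u^{m+p-1}$ rather than the stated $m(m-1)\int u^{m+p-1}$. The paper avoids this by routing $\int_\Omega u^m$ through the same H\"older/Lemma~\ref{lemma6.2} mechanism (it is the term $I_2$), so no extra copy of $\int u^{m+p-1}$ appears on the right-hand side.
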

 \begin{proof}
 Let $m\geq 1$.
 By multiplying $mu^{m-1}$ on 
the both sides of the first equation in \eqref{P}
 we obtain 
\begin{align}\label{6int1}
\frac{d}{dt}\int_{\Omega}u^m
+m(m-1)\int_{\Omega}
              \frac{u^{m+p-2}|\nabla u|^2}{\sqrt{u^2+|\nabla u|^2}} 
 =m(m-1)\chi\int_{\Omega}
              \frac{u^{m+q-2}\nabla u\cdot\nabla v}{\sqrt{1+|\nabla v|^2}} 
\end{align} 
for all $t\in(0,\tmax)$. Using the second equation in \eqref{P},  
we rewrite the right-hand side of \eqref{6int1} to obtain 
\begin{align}\label{6int2}
m(m&-1)\chi\int_{\Omega}
              \frac{u^{m+q-2}\nabla u\cdot\nabla v}{\sqrt{1+|\nabla v|^2}} 
              \\
              \nonumber
    &=-\frac{m(m-1)\chi}{m+q-1}\int_{\Omega}u^{m+q-1}\nabla\cdot
              \left(\frac{\nabla v}{\sqrt{1+|\nabla v|^2}}\right)
               \\
              \nonumber
     &=-\frac{m(m-1)\chi}{m+q-1}\int_{\Omega}u^{m+q-1}
              \left(\Delta v\frac{1}{\sqrt{1+|\nabla v|^2}}
              +\nabla v\cdot\nabla 
              \left(\frac{1}{\sqrt{1+|\nabla v|^2}}\right)\right)
               \\
              \nonumber
     &=\frac{m(m-1)\chi}{m+q-1}\int_{\Omega}u^{m+q-1}
              \frac{u-\mu}{\sqrt{1+|\nabla v|^2}}
               \\
    \nonumber &\quad\,
         +\frac{m(m-1)\chi}{2(m+q-1)}\int_{\Omega}
         \frac{u^{m+q-1}}{{\sqrt{1+|\nabla v|^2}}^3}
         \nabla v\cdot\nabla(|\nabla v|^2)            
\end{align}
for all $t\in(0,\tmax)$. 
Since 
\begin{equation}\label{nabla}
\nabla v\cdot\nabla(|\nabla v|^2)
=2^n\sum_{i,j}^n \left(\frac{\partial v}{\partial x_i}\cdot
  \frac{\partial v}{\partial x_j}\right)
  \frac{{\partial}^2 v}{\partial x_i\partial x_j}
\end{equation}
holds, we combine \eqref{6int2} with \eqref{nabla} to obtain
\begin{align}\label{6int3}
m(m&-1)\chi\int_{\Omega}
              \frac{u^{m+q-2}\nabla u\cdot\nabla v}{\sqrt{1+|\nabla v|^2}} 
              \\
              \nonumber
     &=\frac{m(m-1)\chi}{m+q-1}\int_{\Omega}u^{m+q-1}
              \frac{u-\mu}{\sqrt{1+|\nabla v|^2}}
               \\
    \nonumber &\quad\,
         +\frac{2^{n-1}m(m-1)\chi}{m+q-1}\sum_{i,j}^n
         \int_{\Omega} \frac{u^{m+q-1}}{{\sqrt{1+|\nabla v|^2}}^3}
         \left(\frac{\partial v}{\partial x_i}\cdot
         \frac{\partial v}{\partial x_j}\right)
         \frac{{\partial}^2 v}{\partial x_i\partial x_j}           
\end{align}
for all $t\in(0,\tmax)$. According to  Lemma \ref{lem2.5},
 moreover we can rearrange 
\begin{align}\label{6int4}
\int_{\Omega}
   & \frac{u^{m+q-1}}{{\sqrt{1+|\nabla v|^2}}^3}
        \left(\frac{\partial v}{\partial x_i}\cdot
  \frac{\partial v}{\partial x_j}\right)
  \frac{{\partial}^2 v}{\partial x_i\partial x_j}  
     \\ 
       \nonumber
  &=\omega_n  \int_0^R
       \frac{u^{m+q-1}v^2_r}{{\sqrt{1+v^2_r}}^3}\cdot v_{rr}  
        \cdot r^{n-1}\,dr
     \\ 
       \nonumber
   &=\omega_n \int_0^R
       \frac{u^{m+q-1}v^2_r}{{\sqrt{1+v^2_r}}^3}
        \left( \frac{{\mu}}{n}-u
                   +\frac{n-1}{r^n}\cdot\int_{0}^{r}
                      \rho^{n-1}u(\rho,t)\,d\rho  \right)
        r^{n-1}\,dr 
     \\ 
       \nonumber
   &= \frac{{\mu\omega_n}}{n} \int_0^R
       \frac{u^{m+q-1}v^2_r}{{\sqrt{1+v^2_r}}^3}\cdot r^{n-1}\,dr
       -\omega_n \int_0^R
       \frac{u^{m+q}v^2_r}{{\sqrt{1+v^2_r}}^3}\cdot r^{n-1}\,dr
        \\
    \nonumber &\quad\,            
        +\omega_n(n-1) \int_0^R
        \frac{u^{m+q-1}v^2_r}{{\sqrt{1+v^2_r}}^3}           
        \cdot\frac{1}{r}\left(\int_{0}^{r} \rho^{n-1}u(\rho,t)\,d\rho
        \right)\,dr 
\end{align}
for all $t\in(0,\tmax)$. Then \eqref{6int2}, \eqref{6int3}, 
and \eqref{6int4}, combined with \eqref{6int1} show that
\begin{align}\label{6int5}
\frac{d}{dt}&\int_{\Omega}u^m
+m(m-1)\int_{\Omega}
              \frac{u^{m+p-2}|\nabla u|^2}{\sqrt{u^2+|\nabla u|^2}} 
              \\ 
       \nonumber
&=\frac{m(m-1)\chi}{m+q-1}\int_{\Omega}
              \frac{u^{m+q}}{\sqrt{1+|\nabla v|^2}}
     -\frac{m(m-1)\chi\mu}{m+q-1}\int_{\Omega}
              \frac{u^{m+q-1}}{\sqrt{1+|\nabla v|^2}}
               \\
    \nonumber &\quad\,
    +\frac{n^2 2^{n-1}m(m-1)\chi\mu\omega_n}{(m+q-1)n}
        \int_0^R
       \frac{u^{m+q-1}v^2_r}{{\sqrt{1+v^2_r}}^3}\cdot r^{n-1}\,dr
                \\
    \nonumber &\quad\,
       -\frac{n^2 2^{n-1}m(m-1)\chi\omega_n}{(m+q-1)n}
        \int_0^R
       \frac{u^{m+q}v^2_r}{{\sqrt{1+v^2_r}}^3}\cdot r^{n-1}\,dr
              \\
    \nonumber &\quad\,
      +\frac{n^2 2^{n-1}m(m-1)\chi\omega_n(n-1)}{(m+q-1)n}
        \int_0^R
      \frac{u^{m+q-1}v^2_r}{{\sqrt{1+v^2_r}}^3}           
        \cdot\frac{1}{r}\left(\int_{0}^{r} \rho^{n-1}u(\rho,t)\,d\rho
        \right)\,dr 
\end{align}
for all $t\in(0,\tmax)$. We apply Lemma \ref{lem6.1} with 
$r=m+p-1$ to establish that 
\begin{align*}
m(m-1)\int_{\Omega}
              \frac{u^{m+p-2}|\nabla u|^2}{\sqrt{u^2+|\nabla u|^2}} 
        \geq m(m-1)\int_{\Omega} u^{m+p-2}|\nabla u|
               -m(m-1)\int_{\Omega} u^{m+p-1}.
\end{align*} 
Then noticing that the second and fourth terms 
on the right-hand side of
 \eqref{6int5} are nonpositive,
 we add $\int_{\Omega}u^{m}$ on the
both sides of \eqref{6int5} to obtain 
 \begin{align}\label{6int6}
 \frac{d}{dt}&\int_{\Omega}u^m+\int_{\Omega}u^{m}
 +m(m-1)\int_{\Omega} u^{m+p-2}|\nabla u|
             \\ 
       \nonumber
       & \le I_1(t)+I_2(t)+I_3(t)+I_4(t)+I_5(t)
\end{align}
for all $t\in(0,\tmax)$, 
where
\begin{align*}
I_1(t)&:= m(m-1)\int_{\Omega} u^{m+p-1}, 
\\
I_2(t) &:= \int_{\Omega}u^{m},
\\
I_3(t) &:= \frac{m(m-1)\chi}{m+q-1}\int_{\Omega}u^{m+q}, 
\\
I_4(t) & := \frac{n2^{n-1}m(m-1)\chi\mu}{(m+q-1)}
        \int_{\Omega}u^{m+q-1}
        \end{align*}
as well as
\begin{align*}
I_5(t) := \frac{n^2 2^{n-1}m(m-1)\chi\omega_n}{m+q-1}
        \int_0^R
      \frac{u^{m+q-1}v^2_r}{{\sqrt{1+v^2_r}}^3}           
        \cdot\frac{1}{r}\left(\int_{0}^{r} \rho^{n-1}u(\rho,t)\,d\rho
        \right)\,dr 
 \end{align*}
 Now, from the condition for 
 $p$ and $q$ (see \eqref{pq-Assumption}) 
 we can take 
 $\ep\in (0,p-q-1+\frac{1}{n})$ 
 and put 
 $\alpha:=-p+q+\ep<-1+\frac{1}{n}$. 
Then we apply the H$\ddot{\mbox{o}}$lder inequality 
and the Young inequality to estimate
\begin{align}\label{6-I2esti}
 I_2(t)&\leq
 \left(\int_{\Omega} u^{m\cdot\frac{m+p+\alpha}{m}}\right)^\frac{m}{m+p+\alpha}
 \left(\int_{\Omega} 1\right)^\frac{p+\alpha}{m+p+\alpha}
 \\ 
       \nonumber
 &=\left(
 \frac{m(m-1)}{m+q-1}\int_{\Omega} u^{m+p+\alpha}
 \right)^\frac{m}{m+p+\alpha}
 \left(\frac{m(m-1)}{m+q-1}\right)
 ^\frac{-m}{m+p+\alpha}|\Omega|^\frac{p+\alpha}{m+p+\alpha}
 \\ 
       \nonumber
 &\leq\frac{m}{m+p+\alpha}\cdot\frac{m(m-1)}{m+q-1}
 \int_{\Omega} u^{m+p+\alpha}
   +\frac{p+\alpha}{m+p+\alpha} 
   \left(\frac{m+q-1}{m(m-1)}\right)^\frac{m}{p+\alpha} |\Omega|
  \\ 
       \nonumber
&\leq\frac{m(m-1)}{m+q-1}\int_{\Omega} u^{m+p+\alpha}
   +\left(\frac{m+q-1}{m(m-1)}\right)^\frac{m}{p+\alpha} |\Omega|,  
 \end{align}
and similarly,
  \begin{align}\label{6-I3esti}
 I_3(t)&\leq\frac{m(m-1)\chi}{m+q-1}\left(\int_{\Omega} u^{m+p+\alpha}+|\Omega|\right)
 \end{align}
 as well as
 \begin{align}\label{6-I4esti}
 I_4(t)&\leq\frac{n2^{n-1}m(m-1)\chi\mu}{(m+q-1)}
                \left(\int_{\Omega} u^{m+p+\alpha}+|\Omega|\right)
 \end{align}
for all $t\in(0,\tmax)$. 
On the other hand, since the H$\ddot{\mbox{o}}$lder inequality
 implies that
\begin{align*}
{\omega}_n\int_0^R
     & \frac{u^{m+q-1}v^2_r}{{\sqrt{1+v^2_r}}^3}           
        \cdot\frac{1}{r}\left(\int_{0}^{r} \rho^{n-1}u(\rho,t)\,d\rho
        \right)\,dr 
        \\
    \nonumber   
     &={\omega}_n\int_0^R u^{m+q-1}\frac{v^2_r}{{\sqrt{1+v^2_r}}^3}           
        \cdot\frac{1}{r}\left(\int_{0}^{r}
         \rho^{(n-1)\left(1-\frac{1}{m+q}\right)}
         \cdot\rho^{\frac{n-1}{m+q}}
        u(\rho,t)\,d\rho
        \right)\,dr 
         \\
    \nonumber   
     &\leq\int_0^R u^{m+q-1}\cdot\frac{1}{r}
     \left(\int_{0}^{r}\rho^{n-1}\right)^{1-\frac{1}{m+q}}\cdot
      {\omega}_n\left(\int_{0}^{r}\rho^{n-1}u^{m+q}(\rho,t)\,d\rho\right)
     ^{\frac{1}{m+q}}\,dr 
     \\
    \nonumber   
     &={\left(\frac{1}{n}\right)}^\frac{m+q-1}{m+q} \|u\|_{L^{m+q}(\Omega)}
     \int_0^R u^{m+q-1}\cdot r^{n\frac{m+q-1}{m+q}-1}
\end{align*}
and also that
\begin{align*}
\int_0^R u^{m+q-1}\cdot r^{n\frac{m+q-1}{m+q}-1}
   &=\int_0^R u^{m+q-1}\cdot r^{(n-1)\frac{m+q-1}{m+q+\ep}}\cdot
         r^{\frac{\ep\left\{n(m+q)-(m+q)-n\right\}-(m+q)}{(m+q)(m+q+\ep)}}
     \\
    \nonumber   
   &\leq\left(\int_0^R u^{m+q+\ep}\cdot r^{n-1}\right)
   ^\frac{m+q-1}{m+q+\ep}
     \left(\int_0^R r^{\frac{\ep n}{1+\ep}\left(1-\frac{1}{m+q}\right)-1}\right)
   ^\frac{1+\ep}{m+q+\ep}
    \\
    \nonumber   
   &=\left(\frac{1+\ep}{\ep n\left(1-\frac{1}{m+q}\right)}\right)
        ^\frac{1+\ep}{m+q+\ep}\cdot
        R^{\frac{\ep n}{m+q+\ep}\left(1-\frac{1}{m+q}\right)}\cdot
        \|u\|^{m+q-1}_{L^{m+q+\ep}(\Omega)},
\end{align*}
it holds that 
\begin{align}\label{6It53}
I_5(t)\leq \frac{m(m-1)}{m+q-1}\cdot C(\ep,n,m,q,R,\chi)
                                      \|u\|^{m+q-1}_{L^{m+q+\ep}(\Omega)}
                              \cdot\|u\|_{L^{m+q}(\Omega)},
\end{align}
where 
\[C(\ep,n,m,q,R,\chi):=n2^{n-1}
              \left(\frac{(1+\ep)(m+q)}{\ep(m+q-1)}\right)
              ^\frac{1+\ep}{m+q+\ep}\cdot
              R^{\frac{\ep n(m+q-1)}{(m+q+\ep)(m+q)}}\chi. \] 
Now, we use the Young inequality  
and the relation
$q+\ep=p+\alpha$
 to see that 
\begin{align}\label{6It54}
\|u\|^{m+q-1}_{L^{m+q+\ep}(\Omega)}\cdot\|u\|_{L^{m+q}(\Omega)}
&\leq \frac{m+q-1}{m+q+\ep}\cdot\|u\|
        ^{m+q-1\cdot\frac{m+q+\ep}{m+q-1}}_{L^{m+q+\ep}(\Omega)}
        +\frac{1+\ep}{m+q+\ep}\cdot\|u\|^{\frac{m+q+\ep}{1+\ep}}
          _{L^{m+q}(\Omega)}
           \\
    \nonumber 
&\leq \int_{\Omega} u^{m+p+\alpha}
        +\|u\|^{\frac{m+q+\ep}{1+\ep}}_{L^{m+q}(\Omega)}.
\end{align}
Since the H$\ddot{\mbox{o}}$lder inequality, the Young inequality
and the relation $q+\ep=p+\alpha$ entail that 
\begin{align}\label{6It55}
 \|u\|^{\frac{m+q+\ep}{1+\ep}}_{L^{m+q}(\Omega)} 
    &\leq \left[ \left(\int_{\Omega} u
    ^{(m+q)\frac{m+p+\alpha}{m+q}}\right)^{\frac{m+q}{m+p+\alpha}}
    \left(\int_{\Omega} 1 \right)^{\frac{p+\alpha-q}{m+p+\alpha}}
 \right]^\frac{m+q+\ep}{(m+q)(1+\ep)}
 \\
    \nonumber 
&=\left(\int_{\Omega} u
    ^{m+p+\alpha}\right)^{\frac{1}{1+\ep}}
   |\Omega|^{\frac{\ep}{(m+q)(1+\ep)}}
    \\
    \nonumber 
&\leq\frac{1}{1+\ep}\int_{\Omega} u^{m+p+\alpha}
   +\frac{\ep}{1+\ep}|\Omega|^{\frac{1}{m+q}},
\end{align}
plugging \eqref{6It54} and \eqref{6It55} into \eqref{6It53} implies
\begin{align}\label{6-I5esti}
I_5(t)\leq
\frac{m(m-1)}{m+q-1}\cdot C(\ep,n,m,q,R,\chi)
 \left(2\int_{\Omega} u^{m+p+\alpha}+|\Omega|^{\frac{1}{m+q}}\right)
\end{align}
for all $t\in(0,\tmax)$. Then by combining \eqref{6-I2esti},
 \eqref{6-I3esti}, \eqref{6-I4esti} and \eqref{6-I5esti} we obtain that 
\begin{align}\label{I_2345}
I_2(t)&+I_3(t)+I_4(t)+I_5(t)
 \\
    \nonumber
&\leq \frac{m(m-1)}{m+q-1}\int_{\Omega} u^{m+p+\alpha}
   +\left(\frac{m+q-1}{m(m-1)}\right)^\frac{m}{p+\alpha} |\Omega|  
\\
    \nonumber &\quad\,       
      +\frac{m(m-1)\chi}{m+q-1}\left(\int_{\Omega} u^{m+p+\alpha}+|\Omega|\right)
      \\
    \nonumber &\quad\, 
      +\frac{n2^{n-1}m(m-1)\chi\mu}{(m+q-1)}
                \left(\int_{\Omega} u^{m+p+\alpha}+|\Omega|\right)
      \\
    \nonumber &\quad\, 
      +\frac{m(m-1)}{m+q-1}\cdot C(\ep,n,m,q,R,\chi)
 \left(2\int_{\Omega} u^{m+p+\alpha}+|\Omega|^{\frac{1}{m+q}}\right)
  \\
    \nonumber
 &\leq \frac{m(m-1)B(m)}{m+q-1}\int_{\Omega} u^{m+p+\alpha}+\widetilde{C}(m),
\end{align}
where 
\begin{align*}
B(m):=1+\chi+n2^{n-1}\chi\mu+2C(\ep,n,m,q,R,\chi)
\end{align*}
and 
\begin{align*}
\widetilde{C}(m)&:=
\left(\frac{m+q-1}{m(m-1)}\right)^\frac{m}{p+\alpha} |\Omega|+
\frac{m(m-1)}{m+q-1}\left(\chi+n2^{n-1}\chi\mu\right)|\Omega| 
  \\
    \nonumber &\quad\, 
+\frac{m(m-1)}{m+q-1}C(\ep,n,m,q,R,\chi)|\Omega|^\frac{1}{m+q}
\end{align*}
    for all $t\in(0,\tmax)$. Therefore, aided by \eqref{I_2345} and 
    Lemma \ref{lemma6.2} with 
    \[\eta:=\frac{m+q-1}{2(m+p-1)B(m)},\] we have 
 \begin{align}\label{I_23456}
& I_2(t)+I_3(t)+I_4(t)+I_5(t)
 \\
    \nonumber 
&\leq \frac{m(m-1)B(m)}{m+q-1}\left\{\eta\int_{\Omega}|\nabla u^{m+p-1}| 
+c^m_1 \left(\eta^{\frac{-n(m+p+\alpha-1)}{-n\alpha-n+1}}+1 \right)\right\}
+\widetilde{C}(m)
\\
    \nonumber
&=\frac{m(m-1)}{2}\int_{\Omega}u^{m+p-2}|\nabla u| 
 \\
    \nonumber &\quad\, 
+\frac{m(m-1)B(m)}{m+q-1}\cdot
c^m_1\left\{\left(\frac{m+q-1}{2(m+p-1)B(m)}\right)^{\frac{-n(m+p+\alpha-1)}{-n\alpha-n+1}}+1\right\}+\widetilde{C}(m)
 \end{align}
with some $c_1>0$. 
Here, to estimate the second, third, fourth terms on
 the right-hand side of \eqref{I_23456}, we first show that 
\begin{align}\label{B(m)}
1\leq B(m)&=1+\chi+n2^{n-1}\chi\mu+2C(\ep,n,m,q,R,\chi)
 \\ 
       \nonumber
 &\leq 1+\chi+n2^{n-1}\chi\mu+n2^{n-1}\cdot
 \frac{2(1+\ep)}{\ep}R^n\chi
 \\ 
       \nonumber
 &\leq c_2
\end{align}
with $c_2:=1+\chi+n2^{n-1}\chi\mu+n2^{n+1}R^n\chi$ and
 \begin{align}\label{C(m)til}
 \widetilde{C}(m)
 &\leq
 \left(2^\frac{1}{p+\alpha}\right)^m|\Omega|+
\left(\chi+n2^{n-1}\chi\mu\right)|\Omega|m
+n2^{n+1}R^n\chi|\Omega|m
 \\[1mm]
       \nonumber
&\leq  c^m_3+c_4 m
 \end{align}
with $c_3:=2^\frac{1}{p+\alpha}|\Omega|$ and 
        $c_4:=\left(\chi+n2^{n-1}\chi\mu
        +n2^{n+1}R^n\chi\right)|\Omega|$.
 Now, plugging \eqref{I_23456} with \eqref{B(m)} and \eqref{C(m)til} 
  into \eqref{6int6}, we can see
 \begin{align*}
 \frac{d}{dt}&\int_{\Omega}u^m+\int_{\Omega}u^{m}
 +\frac{m(m-1)}{2}\int_{\Omega}u^{m+p-2}|\nabla u| 
             \\ 
       \nonumber
       &\leq m(m-1)\int_{\Omega} u^{m+p-1}
+mc^m_1c_2
\left(\frac{m+q-1}{2(m+p-1)B(m)}\right)^{\frac{-n(m+p+\alpha-1)}{-n\alpha-n+1}}
\\[1mm]
    \nonumber &\quad\, 
+mc^m_1c_2+ c^m_3+c_4 m.
  \end{align*} 
  Noting from \eqref{B(m)} that
  \begin{align}\label{Lem6.3;last1esti}  
  \left(\frac{m+q-1}{2(m+p-1)B(m)}\right)^{-n}
  &\leq \left(2\cdot\frac{m+p-1}{m+q-1}\cdot B(m)\right)^n\\[1mm] \nonumber
  &\leq \left(2\cdot\left(1+\frac{p-1}{m}\right)\cdot B(m)\right)^n\\[1mm] \nonumber
  &\leq (2pc_2)^n
  \end{align}
 for all $m\ge 1$, 
  we attain 
  that 
  \begin{align*}
   \frac{d}{dt}&\int_{\Omega}u^m+\int_{\Omega}u^{m}
 +\frac{m(m-1)}{2}\int_{\Omega}u^{m+p-2}|\nabla u| \\[1mm]
 \nonumber
       &\leq m(m-1)\int_{\Omega} u^{m+p-1}
+mc^m_1c_2(2p c_2)^{\frac{n(m+p+\alpha-1)}{-n\alpha-n+1}}
+mc^m_1c_2+ c^m_3+c_4 m
\\[1mm] 
       \nonumber
       &\leq m(m-1)\int_{\Omega} u^{m+p-1}
+C^m_1+C_2 m+C_3m\cdot  C^m_4
 \end{align*}
 with some $C_1, C_2, C_3, C_4>0$, which concludes the proof. 
 \end{proof}

%
%
%
%
 
Combination of Lemmas \ref{lemma6.2} and \ref{lemma6.3} 
implies the following lemma which has an important role in 
obtaining the $L^\infty$-estimate for $u$ in Lemma \ref{lemma6.5}. 

\begin{lem}\label{lem;Lpesti;u}
For all $m\ge 1$ there is $C=C(m)$ such that 
\[
 \lp{m}{u(\cdot,t)} \le C  
\]
for all $t\in (0,\tmax)$ and that $C=C(m)\to \infty$ as $m\to \infty$.
\end{lem}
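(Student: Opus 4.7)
The strategy is to bootstrap Lemma \ref{lemma6.3}: the right-hand side contains the ``bad'' term $m(m-1)\int_\Omega u^{m+p-1}$, and the left-hand side contains a dissipation term of the form $\int_\Omega u^{m+p-2}|\nabla u|$. Since $\int_\Omega u^{m+p-2}|\nabla u|=\frac{1}{m+p-1}\int_\Omega|\nabla u^{m+p-1}|$, Lemma \ref{lemma6.2} — which is precisely designed to dominate $L^{m+p+\alpha}$-norms by such $L^1$-gradient terms for $\alpha<-1+\frac{1}{n}$ — should let us absorb the bad term into the dissipation. The admissible choice $\alpha=-1\in(-m-p,-1+\frac1n)$ matches $m+p+\alpha=m+p-1$ exactly, which is what we need.

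Concretely, I first rewrite Lemma \ref{lemma6.3} in the form
\begin{align*}
\frac{d}{dt}\int_\Omega u^m+\int_\Omega u^m
+\frac{m(m-1)}{2(m+p-1)}\int_\Omega|\nabla u^{m+p-1}|
\le m(m-1)\int_\Omega u^{m+p-1}+K_1(m),
\end{align*}
where $K_1(m):=C_1^m+C_2m+C_3mC_4^m$. Next I apply Lemma \ref{lemma6.2} with $\alpha=-1$, which gives
\begin{align*}
\int_\Omega u^{m+p-1}
\le \eta\int_\Omega|\nabla u^{m+p-1}|+C^m\bigl(\eta^{-n(m+p-2)}+1\bigr)
\end{align*}
for every $\eta>0$ (note the exponent simplifies because $-n\alpha-n+1=1$). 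Choosing $\eta=\frac{1}{4(m+p-1)}$ makes the resulting $|\nabla u^{m+p-1}|$-contribution on the right at most one half of the corresponding term on the left, so it can be absorbed; this yields
\begin{align*}
\frac{d}{dt}\int_\Omega u^m+\int_\Omega u^m
+\frac{m(m-1)}{4(m+p-1)}\int_\Omega|\nabla u^{m+p-1}|
\le K_2(m),
\end{align*}
with an $m$-dependent constant $K_2(m)=K_1(m)+m(m-1)C^m\bigl((4(m+p-1))^{n(m+p-2)}+1\bigr)$ (for $m=1$ the inequality is trivial since the ``bad'' term vanishes and mass conservation already bounds $\int_\Omega u$). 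Dropping the non-negative gradient term and applying an elementary ODE comparison argument, I conclude $\int_\Omega u^m(\cdot,t)\le\max\bigl\{\int_\Omega u_0^m,\,K_2(m)\bigr\}=:K_3(m)$, so that $\|u(\cdot,t)\|_{L^m(\Omega)}\le K_3(m)^{1/m}=:C(m)$ holds uniformly on $(0,\tmax)$. The divergence $C(m)\to\infty$ as $m\to\infty$ is automatic from the explicit form of $K_2(m)$, whose leading factor grows like $m^{nm}$, i.e.\ $C(m)$ grows at least polynomially in $m$.

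There is no real ``hard step'' — the genuine work was already done in Lemmas \ref{lemma6.2} and \ref{lemma6.3}; the main point here is simply choosing the right $\alpha$ (namely $\alpha=-1$, the critical value compatible with $m+p+\alpha=m+p-1$) and selecting $\eta$ small enough to perform the absorption. The slight subtlety to keep track of is that one must check $-1<-1+\frac1n$ (true for all $n\ge 1$) so that Lemma \ref{lemma6.2} is applicable with this choice, and that the resulting constant $K_2(m)$, though enormous in $m$, is finite for each fixed $m\ge 1$, which is all that is needed here.
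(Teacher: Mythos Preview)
Your proposal is correct and follows essentially the same route as the paper: both start from Lemma~\ref{lemma6.3}, invoke Lemma~\ref{lemma6.2} with $\alpha=-1$ (implicitly in the paper) and a choice of $\eta$ proportional to $\frac{1}{m+p-1}$ to absorb $m(m-1)\int_\Omega u^{m+p-1}$ into the gradient term, and then close via ODE comparison. The only cosmetic difference is your $\eta=\frac{1}{4(m+p-1)}$ versus the paper's $\eta=\frac{1}{2(m+p-1)}$; your explicit remark that $\alpha=-1$ lies in the admissible range $(-m-p,-1+\tfrac1n)$ is a helpful clarification the paper leaves tacit.
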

\begin{proof}
In light of Lemma \ref{lemma6.3} we see that 
 \begin{align}\label{ineq;defiq;withterm}
 \frac{d}{dt}\int_{\Omega}u^m&+\int_{\Omega}u^{m}
 +\frac{m(m-1)}{2} \int_{\Omega} u^{m+p-2}|\nabla u|
 \\   
  \nonumber
  &\leq m(m-1)\int_{\Omega} u^{m+p-1}
 +c^m_1+c_2 m+c_3m\cdot  c^m_4
\end{align} 
holds on $(0,\tmax)$ with some $c_1,c_2,c_3,c_4>0$.  
Here, using Lemma \ref{lemma6.2} with $\eta = \frac{1}{2(m+p-1)}$ to obtain that 
\[
 \int_\Omega u^{m+p-1} \le \frac{m(m-1)}{2(m+p-1)} 
 \io |\nabla u^{m+p-1}| 
 + c_5^m\left( (2(m+p-1))^{n(m+p-2)}+1 \right), 
\]
we infer from \eqref{ineq;defiq;withterm}  
that 
\[
 \frac d{dt} \io u^m + \io u^m 
 \le C(m) 
\]
with 
\[
  C(m):= c^m_1+c_2 m+c_3m\cdot  c^m_4  
 +m(m-1)c_5^m \left( (2(m+p-1))^{n(m+p-2)}+1\right),
\] 
which with the ODE comparison principle means that 
\[
 \lp{m}{u(\cdot,t)} \le \max\left\{ \lp{m}{u_0}, C(m)^{\frac 1m} \right\}. 
\]
Moreover, in view of the fact that 
\begin{align*}
C(m)^{\frac 1m} &\ge [m(m-1)]^{1/m}c_5 m^{n(1+\frac{p-2}{m})}\\
 &\to \infty 
\end{align*}
as $m\to \infty$, 
we can attain this lemma. 
\end{proof}

The estimate obtained in Lemma \ref{lem;Lpesti;u} is not a uniform-in-$m\ L^m$-estimate for $u$; 
 taking the limit as $m\to \infty$ 
in the $L^m$-estimate for $u$ obtained in Lemma \ref{lem;Lpesti;u} 
 does not directly enable us to 
have an $L^\infty$-estimate for $u$. 
Thus we employ the Moser iteration argument 
to have an $L^\infty$-estimate 
by using the $L^m$-estimate for $u$ for $m\ge 1$. 
%
%
\begin{lem}\label{lemma6.5}
There exists a constant $C>0$ such that
\begin{align*}
\sup_{t\in(0,\tmax)}\|u(\cdot,t)\|_{L^{\infty}(\Omega)}\leq C,
\end{align*}
i.e., $\limsup_{t\nearrow\tmax}\|u(\cdot,t)\|_{L^{\infty}(\Omega)}
< \infty$.
\end{lem}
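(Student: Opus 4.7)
The plan is to upgrade the $m$-dependent bounds of Lemma~\ref{lem;Lpesti;u} into a uniform $L^\infty$-bound via a Moser iteration scheme run along a geometric sequence of exponents $m_k:=\rho^k m_0$ ($k\in\mathbb{N}$), for a suitable ratio $\rho>1$ coming from the Sobolev gain of the dissipative $L^1$-gradient term. The crucial observation is that although Lemma~\ref{lem;Lpesti;u} produces constants $C(m)$ diverging super-polynomially in $m$, taking $1/m_k$-th roots of these constants at every step keeps the cumulative factor under control.

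First I would rewrite the differential inequality of Lemma~\ref{lemma6.3}, with $m$ replaced by $m_k$, in the equivalent form
\[
 \frac{d}{dt}\io u^{m_k}+\io u^{m_k}+\frac{m_k(m_k-1)}{2(m_k+p-1)}\io|\nabla u^{m_k+p-1}|
 \le m_k(m_k-1)\io u^{m_k+p-1}+R(m_k),
\]
with $R(m_k)=C_1^{m_k}+C_2 m_k+C_3 m_k C_4^{m_k}$, and combine the $L^1$-gradient term on the left with the Sobolev embedding $W^{1,1}(\Omega)\hookrightarrow L^{n/(n-1)}(\Omega)$ (replaced by $W^{1,1}\hookrightarrow L^\infty$ when $n=1$) to gain control of $\lp{\sigma_k}{u}^{m_k+p-1}$ with $\sigma_k:=(m_k+p-1)\tfrac{n}{n-1}>m_k+p-1$. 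The bad right-hand side term $m_k(m_k-1)\io u^{m_k+p-1}$ is then interpolated via H\"older between $\lp{m_{k-1}}{u}$ and $\lp{\sigma_k}{u}$ and split by Young's inequality so that one piece is absorbed into the dissipation on the left. After the ODE comparison principle this should yield
\[
 \sup_{0<t<\tmax}\io u^{m_k}\le \max\Bigl\{\io u_0^{m_k},\,D(m_k)\lp{m_{k-1}}{u}^{\beta_k}+R(m_k)\Bigr\},
\]
with $D(m_k)$ polynomial in $m_k$ and $\beta_k$ dictated by the Sobolev gap.

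Taking $1/m_k$-th roots and writing $Y_k:=\sup_{0<t<\tmax}\lp{m_k}{u}$ produces a recursion $Y_k\le \tilde C_k\max\{\lp{\infty}{u_0},\,Y_{k-1}^{\theta_k}\}$ with $\tilde C_k$ and $\theta_k$ explicit, and the main technical obstacle will be verifying that $\sup_k\tilde C_k<\infty$ and $\sup_k\theta_k<1$. The uniform bound on $\theta_k$ strictly below $1$ must come precisely from the Sobolev exponent ratio $\sigma_k/(m_k+p-1)=n/(n-1)>1$, while the bound on $\tilde C_k$ reflects the fact that the super-polynomial blow-up of the constants in Lemma~\ref{lem;Lpesti;u} is neutralized by the $1/m_k$-th root, since $R(m_k)^{1/m_k}$ stays bounded as $k\to\infty$ by a direct calculation from the explicit form of $R(m_k)$. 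Once this is in place, $\{Y_k\}$ is bounded and passing $k\to\infty$ gives $\sup_{0<t<\tmax}\lp{\infty}{u(\cdot,t)}\le \lim_k Y_k<\infty$, which is the claim.
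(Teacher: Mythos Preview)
Your plan to run a Moser iteration off the differential inequality of Lemma~\ref{lemma6.3} is correct and is essentially what the paper does. However, the mechanism you describe for the convergence of the iteration is wrong. The recursion exponent $\theta_k$ on $Y_{k-1}$ is \emph{not} strictly below $1$: after interpolating $\io u^{m_k+p-1}$ between $L^{m_{k-1}}$ and $L^{\sigma_k}$ and absorbing the $L^{\sigma_k}$-part via Young's inequality, the surviving lower-order term is $C(m_k)\,\|u\|_{L^{m_{k-1}}}^{\,m_k+p-1}$, so after taking $m_k$-th roots you obtain $\theta_k=(m_k+p-1)/m_k\ge 1$, with equality precisely when $p=1$. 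The Sobolev ratio $n/(n-1)$ does \emph{not} push $\theta_k$ below $1$; it only guarantees that the interpolation parameter lies strictly in $(0,1)$ so that the absorption step is possible at all. What actually makes the iteration converge is that $\theta_k\to 1^{+}$ with $\sum_k(\theta_k-1)<\infty$ (since $m_k$ grows geometrically), hence $\prod_k\theta_k<\infty$, together with $\sum_k m_k^{-1}\log D(m_k)<\infty$ (which holds because $D(m_k)$ is only polynomial in $m_k$). If you attempt to verify $\sup_k\theta_k<1$ as stated, you will simply fail.

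For comparison, the paper implements the same idea with the specific choice $m_k=2^k+(p-1)$, designed so that $m_{k-1}=\tfrac{1}{2}(m_k+p-1)$; it then uses a Gagliardo--Nirenberg inequality with the $L^{1/2}$-norm (rather than the Sobolev embedding into $L^{n/(n-1)}$) to bound $\io u^{m_k+p-1}$ directly by $\|\nabla u^{m_k+p-1}\|_{L^1}^{\,n/(n+1)}M_{k-1}^{\,2/(n+1)}$ plus lower order, which after Young gives the clean recursion $M_k\le b^{k}M_{k-1}^{2}$ for $M_k:=\sup_t\io u^{m_k}$. Iterating yields $M_k\le b^{2^{k+1}}M_0^{2^k}$ and hence $M_k^{1/m_k}\le b^{2}M_0$ in the limit. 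Your route via $L^{n/(n-1)}$ and H\"older interpolation is workable but messier, and the convergence argument must be phrased through $\prod_k\theta_k<\infty$ rather than $\sup_k\theta_k<1$.
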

\begin{proof}
We put 
\begin{align}\label{mk}
m_k:=2^k+(p-1)
\end{align}
 for $k\in \mathbb{N}\cup\{0\}$.
Then we can verify that 
\[
m_0=p\geq 1, \quad m_k>m_{k-1}\quad \mbox{for all} \ 
k\in \mathbb{N}\cup\{0\}\] 
\mbox{and} 
\[m_k\to \infty \quad \mbox{as}\ k\to \infty\]
as well as 
\[ m_{k-1}=\frac{m_k-p+1}{2}.\]
Now, given $T\in (0,\tmax)$, we introduce 
\begin{align*}
M_k:=\sup_{t\in(0,T)}\int_{\Omega}u^{m_k}(x,t)\,dx
\end{align*} 
for an arbitrary integer $k$ and let $m:=m_k$. 
 First we can use the
  Gagliardo--Nirenberg type inequality (see \cite{L-L}) and find $c>0$ 
 such that
 \begin{align}\label{GNineq-ump1}
 \|u^{m+p-1}\|_{L^1(\Omega)} 
 \leq c \|\nabla u^{m+p-1}\|^a_{L^1(\Omega)} 
 \|u^{m+p-1}\|^{1-a}_{L^\frac{1}{2}(\Omega)} 
 +c\|u^{m+p-1}\|_{L^\frac{1}{2}(\Omega)}, 
 \end{align}
where $a:=\frac{n}{n+1}$ for all $t\in(0,\tmax)$. 
Moreover, the Young inequality enables us to see that 
 \begin{align}\label{Yineq-ump1}
 c & \left(\int_{\Omega}|\nabla u^{m+p-1}|\right)^a
\|u^{m+p-1}\|^{1-a}_{L^\frac{1}{2}(\Omega)} 
  \\ 
       \nonumber
&\leq \frac{a}{2}\int_{\Omega}u^{m+p-2}|\nabla u|
+(1-a)\{2(m+p-1)^\frac{a}{1-a}c^\frac{1}{1-a}
\|u^{m+p-1}\|_{L^\frac{1}{2}(\Omega)} 
\\ 
       \nonumber
&\leq \frac{1}{2}\int_{\Omega}u^{m+p-2}|\nabla u|
+\{2c(m+p-1)\}^{n+1}
\|u^{m+p-1}\|_{L^\frac{1}{2}(\Omega)}. 
 \end{align}
Thus plugging \eqref{GNineq-ump1} and \eqref{Yineq-ump1} 
into \eqref{lem6.3} implies 
\begin{align*}
 \frac{d}{dt}&\int_{\Omega}u^m+\int_{\Omega}u^{m}
 \\   
  \nonumber
  &\leq 
  Cm^2\left(\frac{m+p-1}{2}\right)^{n+1}
\|u^{m+p-1}\|_{L^\frac{1}{2}(\Omega)} 
+C^m_1+C_2 m+C_3m\cdot  C^m_4, 
\end{align*}
with $C:={4c}^{n+1}$. 
Therefore we apply a comparison argument to establish that 
\begin{align*}
M_k \leq 
 \max\left\{\int_{\Omega} u^{m_k}_0, 
 2( C^{m_k}_1+C_2m_k+C_3m_k\cdot C^{m_k}_4),
 2C m^{n+3}_kM^2_{k-1}\right\}
\end{align*}   
for all $t\in(0,\tmax)$. Now if there exists a sequence 
$(m_{k_j})_{j\in \mathbb{N}}$ such that $m_{k_j}\to \infty$ and 
\begin{align*}
M_{k_j} \leq \int_{\Omega} u^{m_{k_j}}_0
\end{align*}
for all $j\in \mathbb{N}$, then we take the $m_{k_j}$-th 
root of the both sides to obtain 
\begin{align*}
\sup_{t\in(0,T)}\|u(\cdot,t)\|_{L^{m_{k_j}}(\Omega)}\leq \|u_0\|_{L^{m_{k_j}}(\Omega)}.
\end{align*}
We derive from letting $m_{k_j}\to \infty$ that
\begin{align*}
\sup_{t\in(0,T)}\|u(\cdot,t)\|
_{L^{\infty}(\Omega)}\leq \|u_0\|_{L^{\infty}(\Omega)}
\end{align*}
in this case. Contrarily, if there is no such sequence, at the first  we have
\begin{align*}
M_k
\leq 2( C^{m_k}_1+C_2m_k+C_3m_k\cdot C^{m_k}_4).
\end{align*}
We take the $m_k$-th root on the
 both sides and use the 
elementary inequality $\sqrt[m]{X+Y}\leq\sqrt[m]{X}+\sqrt[m]{Y}$, where $X\geq 0$ and $Y\geq 0$, to obtain 
\begin{align*}
\sup_{t\in(0,T)}\|u(\cdot,t)\|_{L^{m_k}(\Omega)}
&\leq 
\left\{2( C^{m_k}_1+C_2m_k+C_3m_k\cdot C^{m_k}_4)\right\}
^\frac{1}{m_k}
  \\
   \nonumber
&\leq
2^\frac{1}{m_k}C_1+ C_2^\frac{1}{m_k} m_k^\frac{1}{m_k}
+C_4C_3^\frac{1}{m_k} m_k^\frac{1}{m_k}.
\end{align*}
By taking $m_k\to \infty$, we obtain 
\begin{align*}
\sup_{t\in(0,T)}\|u(\cdot,t)\|_{L^{\infty}(\Omega)}\leq C_1+1+C_4.
\end{align*} 
In the last case we will use the Moser iteration argument. 
The definition of $m_k$ (see \eqref{mk}) and the elementary 
inequality $2^k+(p-1)\leq p2^k$ yields that 
\begin{align*}
M_k &\leq 
 2C\{2^k+(p-1)\}^{n+3}M^2_{k-1}
 \\   \nonumber
&\leq 
 2Cp^{n+3}(2^{n+3})^k M^2_{k-1}.
\end{align*}
Then there exists a constant $b>1$ independent of $T$ which satisfies
  \begin{align*}
  M_k\leq b^kM_{k-1} \quad \mbox{for all}\  k\geq 1.
  \end{align*}
  Using the same argument as in the proof of
  \cite[Lemma 6.2]{B-W}, we have
  \begin{align*}
  M_k\leq b^{2^{k+1}}M^{2^k}_0 \quad \mbox{for all}\  k\geq 1.
  \end{align*}
  Thus, we take the $m_k$-th root of the
  both sides and use 
  \eqref{mk} again to obtain  
  \begin{align*}
  \sup_{t\in(0,T)}\|u(\cdot,t)\|_{L^{m_k}(\Omega)}
= M_k^\frac{1}{m_k}
\leq b^\frac{2^{k+1}}{2^k
+(p-1)} M_0^\frac{2^k}{2^k+(p-1)}.
  \end{align*}
  Therefore, taking $k\to \infty$, we establish
  \begin{align*}
\sup_{t\in(0,T)}\|u(\cdot,t)\|_{L^{\infty}(\Omega)}\leq 
 b^2M_0
\end{align*} 
and arrive at the conclusion.
\end{proof}
%
%
\begin{proof}[{\bf Proof of Theorem 1.2.}]
Thanks to Lemma \ref{lemma6.5} and 
 extensibility criterion obtained in Theorem 1.1, 
 we see that $\tmax=\infty$ and that there exists $C>0$ 
such that 
\begin{align*}
\|u(\cdot,t)\|_{L^\infty(\Omega)}\leq C\quad \mbox{for all}\ t>0,
\end{align*} 
which means the end of the proof.
\end{proof}
\newpage
 
\end{document}